\documentclass[reqno,11pt]{amsart}
\usepackage[shortlabels]{enumitem}
\usepackage{hyperref}
\usepackage[
    backend=biber,       
    style=numeric,     
    doi=true,            
    url=true,            
    giveninits=true,      
]{biblatex}
\usepackage[nameinlink,capitalise]{cleveref}
\usepackage[margin=1in]{geometry}
\usepackage{tikz-cd}
\usepackage{bm}
\usepackage{physics}
\usepackage{geometry}
\usepackage{amsfonts}
\usepackage{appendix}

\renewcommand*{\bibnamedash}{\rule[0.4ex]{3em}{0.6pt}\space}
\makeatletter
\AtEveryBibitem{%
  \ifnameundef{author}
    {\global\undef\bbx@lasthash}
    {\iffieldequals{fullhash}{\bbx@lasthash}
       {\savefield{fullhash}{\bbx@lasthash}%
        \renewbibmacro*{author}{\bibnamedash}}
       {\savefield{fullhash}{\bbx@lasthash}}}%
}
\makeatother
\renewbibmacro*{in:}{%
  \ifentrytype{article}{}{\printtext{\bibstring{in}\intitlepunct}}}

\usepackage[textsize=scriptsize,backgroundcolor=white]{todonotes}

\theoremstyle{plain}
\newtheorem{thm}{Theorem}
\newtheorem{cor}[thm]{Corollary}
\newtheorem{prop}[thm]{Proposition}
\newtheorem{lem}[thm]{Lemma}
\newtheorem{asmp}{Assumption}

\theoremstyle{definition}
\newtheorem{defn}[thm]{Definition}

\theoremstyle{remark}
\newtheorem{rk}[thm]{Remark}

\AddToHook{env/defn/begin}{\crefalias{thm}{defn}}
\AddToHook{env/prop/begin}{\crefalias{thm}{prop}}
\AddToHook{env/asmp/begin}{\crefalias{thm}{asmp}}
\AddToHook{env/lem/begin}{\crefalias{thm}{lem}}
\AddToHook{env/rk/begin}{\crefalias{thm}{rk}}
\AddToHook{env/cor/begin}{\crefalias{thm}{cor}}
\AddToHook{env/appendices/begin}{\crefalias{section}{appendix}}
\AddToHook{env/appendices/begin}{\crefalias{subsection}{appendix}}

\crefname{defn}{Definition}{Definitions}
\crefname{thm}{Theorem}{Theorems}
\crefname{asmp}{Assumption}{Assumptions}
\crefname{cor}{Corollary}{Corollaries}
\crefname{lem}{Lemma}{Lemmas}
\crefname{prop}{Proposition}{Propositions}
\crefname{rk}{Remark}{Remarks}

\allowdisplaybreaks
\DeclareMathOperator{\divr}{div}
\DeclareMathOperator{\vol}{vol}
\DeclareMathOperator{\dom}{dom}
\DeclareMathOperator{\spn}{span}
\DeclareMathOperator{\supp}{supp}
\DeclareMathOperator{\Id}{Id}
\DeclareMathOperator{\alg}{Alg}
\DeclareMathOperator{\Adj}{Ad}
\DeclareMathOperator{\adj}{ad}
\DeclareMathOperator{\proj}{proj}
\DeclareMathOperator{\ran}{ran}
\DeclareMathOperator{\degr}{deg}

\newcommand{\vertiii}[1]{{\left\vert\kern-0.25ex\left\vert\kern-0.25ex\left\vert #1 \right\vert\kern-0.25ex\right\vert\kern-0.25ex\right\vert}}

\title[Koopman--von Neumann Dynamics and Noncommutative Geometry]{Measure-free Koopman--von Neumann Dynamics and Noncommutative Geometry}
\author[Giannakis]{Dimitrios Giannakis}
\address{Department of Mathematics, Dartmouth College, Hanover, NH 03755, USA.}
\email{dimitrios.giannakis@dartmouth.edu}
\author[Montgomery]{Michael Montgomery}
\address{Department of Mathematics, Dartmouth College, Hanover, NH 03755, USA.}
\email{michael.r.montgomery@dartmouth.edu}

\begin{document}

\maketitle

\begin{abstract}
	The Koopman--von Neumann formulation of classical statistical dynamics maps the isometric evolution of probability densities in $L^1$ under the Liouville equation to a unitary evolution of quantum mechanical wavefunctions in $L^2$ generated by the antisymmetric part of the Liouville operator.
	This approach enables the use of Hilbert space techniques to model the statistical evolution of observables.
	However, being an $L^2$ method, it is not suitable for representing pointwise evolution along dynamical trajectories.
	Here, we propose a Koopman--von Neumann framework that replaces the $L^p$ spaces associated with a volume measure on the state space manifold, $X$, by a reproducing kernel Hilbert space (RKHS), $\mathcal H$, of continuous functions on $X$ that satisfy joint analyticity conditions with respect to the generator (vector field), $V$, of the dynamics and its RKHS adjoint.
	Our scheme is based on two amplification steps:
	First, a dilation of the antisymmetric part of $V$ to an essentially skew-adjoint operator, $L$, on the tensor product Hilbert space $\mathfrak H = \ell^2(\mathbb N) \otimes \mathcal H$.
	Second, a dilation of $L$ to an essentially skew-adjoint, free derivation, $\mathcal D$, acting on a weighted symmetric Fock space $\mathfrak F$ generated by $\mathfrak H$, with coalgebra structure.
	Under the condition that $\mathcal H$ is a finitely generated reproducing kernel Hilbert algebra, we show that the unitary evolution generated by $\mathcal D$ consistently recovers the (generally, non-unitary) Koopman evolution of observables in a dense subalgebra of $\mathcal H$ generated by jointly analytic vectors, over a time interval that is uniformly bounded away from zero.
	Upon iteration, this scheme consistently recovers the Koopman evolution in the uniform norm over any finite time interval.
	In a subsequent part of this work, we build a family of weak spectral triples $(\mathcal A_n, \mathfrak F, -i \mathcal D)$, $n \in \mathbb N$, wherein  $\mathcal A_n$ are non-abelian $*$-subalgebras of $B(\mathfrak F)$ generated by creation and annihilation operators and $-i \mathcal D$ plays the role of a Dirac operator (generally, with noncompact resolvent) inducing extended pseudometrics on the state spaces of $\mathcal A_n$.
	In contrast to standard canonical commutation relation algebras, the algebras $\mathcal A_n$ are generated by bounded operators.
	We describe how the evolution of quantum observables in $\mathcal A_n$ consistently represents the pointwise evolution of observables in $\mathcal H$.
	We also characterize aspects of the quantum pseudometrics on the state space manifold $X$ induced by $(\mathcal A_n, \mathfrak F, -i\mathcal D)$.
	Finally, we make connections with the theory of kernel mean embeddings of Borel probability measures.
\end{abstract}

\section{Introduction}
\label{sec:introduction}

Consider a dynamical flow $\Phi^t\colon X \to X$, $t \in \mathbb R$, on a smooth, orientable manifold $X$, generated by a smooth vector field $\vec V\colon X \to T X$.
It is well-known that the evolution $t \mapsto p_t$ of smooth probability densities $p_t$ under $\Phi^t$ is governed by the Liouville equation,
\begin{equation}
	\label{eq:liouville}
	\partial_t p_t = \mathcal L \psi_t \equiv - \divr(p_t \vec V).
\end{equation}
Making the change of variables $\psi_t = \sqrt{p_t}$, and assuming that $p_t$ is strictly positive, it follows from basic rules of calculus that $\psi_t$ satisfies the evolution equation
\begin{equation}
	\label{eq:kvn}
	\partial_t \psi_t = A \psi_t, \quad A \psi_t = - \frac{\vec V \cdot \nabla \psi_t + \divr(\psi_t\vec V)}{2},
\end{equation}
where $A$ is an antisymmetric operator with dense domain $C^\infty_c(X) \subset L^2(X)$,
\begin{equation*}
	\langle f, A g \rangle = - \langle A f, g \rangle, \quad \langle f, g \rangle := \int_X \bar f g \, d\vol, \quad f, g \in C^\infty_c(X).
\end{equation*}
Under appropriate conditions (e.g., \cite{StenglEtAl24}), $A$ extends to a skew-adjoint operator, $\tilde{A}$, on $L^2(X)$, leading to the unitary evolution
\begin{equation*}
	\psi_t = e^{t \tilde{A}} \psi_0.
\end{equation*}

The construction outlined above is a basic example of methods referred to interchangeably as Koopman--von Neumann or Hilbert space methods for classical statistical dynamics.
In essence, the isometric evolution of the probability density function $p_t$ in $L^1(X)$ is mapped into an equivalent unitary evolution of a wavefunction $\psi_t$ in $L^2(X)$, thus providing a mathematical link between classical and quantum dynamics.
Historically, this approach can be traced to work of Sch\"onberg \cite{Schonberg52,Schonberg53} from the 1950s and later work by  Loinger \cite{Loinger62}, Della Riccia and Wiener \cite{DellaRiciaWiener66}, and Sudarshan \cite{Sudarshan76}, among other authors.
Inspired by the work of Koopman and von Neumann \cite{Koopman31,KoopmanVonNeumann32} from the 1930s on the unitary evolution of observables under measure-preserving dynamics, Mauro \cite{Mauro02} studied complex-valued wavefunctions analogous to $\psi_t$ above, which he called Koopman--von Neumann waves.

Mauro's constructions and results have stronger connections with the line of work initiated by Sch\"onberg rather than the work of Koopman and von Neumann (which was seminal in its own way in the foundation of modern ergodic theory), but the term Koopman--von Neumann approach is now commonly used in reference to methods that use unitarily evolving quantum wavefunctions to capture the non-unitary evolution of classical probability densities \cite{Barandes25}.
In recent years, variants of this framework have been extended and applied in diverse areas, including statistical mechanics, quantum simulation, plasma dynamics, kinetic theory, and semiclassical dynamics; e.g., \cite{BondarEtAl12,BondarEtAl19,Joseph20,LinEtAl22,JosephEtAl23,StenglEtAl24,NovikauJoseph25}.
A relativistic version of the Koopman--von Neumann method was developed in \cite{Mezic23}.

\subsection{Motivation: Reproducing kernel Hilbert spaces for measure-free Koopman--von Neumann dynamics}
\label{sec:motivation}

A common aspect of many Koopman--von Neumann techniques (in fact, all such methods known to us) is that the wavefunction $\psi_t$ lies in an $L^2$ space associated with a volume measure on the state space manifold $X$.
A special property of the $L^2(X)$ inner product in that case is that the symmetric part, $S$, of the Liouville operator $\mathcal L$ is a multiplication operator by the negative divergence of the vector field,
\begin{equation*}
	S f = \frac{\vec V \cdot \nabla f - \divr(f \vec V)}{2} = - (\divr \vec V) f,
\end{equation*}
with $\mathcal L = S + A$ and $\langle f, S g \rangle = \langle S f, g \rangle$ on $C^\infty_c(X)$.
This implies $\mathcal L(\psi_t^2) = 2 \psi_t A \psi_t$, giving~\eqref{eq:kvn}.
However, $L^2$ spaces induced by volume measures on manifolds are not appropriate for representing the evolution of observables along individual dynamical trajectories (i.e., $x \mapsto f(\Phi^t(x))$), as they lack a notion of continuous pointwise evaluation of their elements.
Relatedly, the Koopman---von Neumann evolution~\eqref{eq:kvn} can only capture the evolution of probability measures that are absolutely continuous with respect to a fixed volume measure, but a general flow $\Phi^t$ does not have a canonical measure associated to it.

In ergodic theory, it is common to work in $L^p$ spaces associated with invariant measures, and for certain classes of dynamical systems (typically exhibiting some form of hyperbolicity) there is a distinguished choice such as a Sinai--Ruelle--Bowen (SRB) measure \cite{Young02,Blank17}.
However, invariant measures are oftentimes supported on null subsets of $X$ with respect to the volume measure (e.g., attractors under dissipative dynamics) and the associated $L^p$ spaces cannot capture the transient evolution of observables off of the invariant measure's support.
In Anosov dynamical systems, a powerful approach has been to study transfer operators in anisotropic Banach spaces of distributions adapted to the stable and unstable directions of the dynamics \cite{BlankEtAl02,GouezelLiverani06,BaladiTsujii07,GiuliettiEtAl13}, extending earlier work on expanding maps \cite{Ruelle89,GundlachLatushkin03} where the transfer operator acts as a smoothing operator on H\"older or Sobolev spaces.
Spectral analysis of transfer operators in such spaces yields important information such as Pollicott--Ruelle resonances that characterize mixing and rates of decay of correlations.
However, these spaces lack the requisite inner product structure for Hilbert space formulations of dynamics.

These considerations motivate the development of Koopman--von Neumann-type techniques in Hilbert spaces that do not depend on a choice of measure and allow reconstruction of observables along individual dynamical trajectories.
With these objectives in mind, reproducing kernel Hilbert spaces (RKHSs) are natural candidates for the Hilbert space:
An RKHS can be defined on an arbitrary set $X$ without invoking a measurable structure and, by construction, every such space consists of everywhere-defined functions with associated continuous pointwise evaluation functionals \cite{Aronszajn50,PaulsenRaghupathi16}.

The action of dynamical systems on RKHSs has a history of study using composition operators or weighted composition operators; e.g., \cite{Cowen83,ContrerasEtAl01,Le17,Zorboska18,IkedaEtAl22,IkedaEtAl22b,HartzTornes25,KohneEtAl25}.
For an RKHS $\mathcal H$ of complex-valued functions on $X$ with reproducing kernel $k\colon X \times X \to \mathbb C$ and a discrete-time dynamical map $\Phi \colon X \to X$, one way of approaching these constructions (focusing on the unweighted case for illustration) is to define a linear map $P \colon \tilde{\mathcal H} \to \mathcal H$ on the dense subspace $\tilde{\mathcal H} = \{ k_x: x \in X\} \subset \mathcal H$ spanned by the kernel sections $k_x := k(x, \cdot)$ as
\begin{equation*}
	P k_x = k_{\Phi(x)}.
\end{equation*}
Note that well-definition of $P$ generally requires that the kernel sections $k_x$ are linearly independent.
The adjoint of $P$ is then a composition operator, $P^* f = f \circ \Phi$, acting on functions $f$ in a (not necessarily dense) domain in $\mathcal H$.

Properties of $P$ such as closability, boundedness, normality, and existence of unitary extensions, are the outcome of an intricate interplay between the dynamics $\Phi$ and the reproducing kernel $k$.
In general, even closability (required for the composition operator $P^*$ to be densely defined) is non-trivial.
In a number of examples, boundedness of $P$ places severe restrictions on the dynamics; for example, when $k$ is a translation-invariant kernel on $\mathbb R^d$ induced by an analytic function, $P$ is bounded if and only if $\Phi$ is an affine map \cite{IkedaEtAl22b}.

For a continuous-time flow $\Phi^t\colon X \to X$ generated by a vector field $\vec V\colon X \to TX$ such that $V := \vec V \cdot \nabla$ and $V^*$ both have dense domains in $\mathcal H$, with $\dom(V^*)$ containing $\tilde{\mathcal H}$, we have
\begin{equation}
	\label{eq:rkhs_liouville}
	\partial_t k_{\Phi^t(x)} = V^* k_{\Phi^t(x)},
\end{equation}
with the left-hand side given by the weak limit $\langle \partial_t k_{\Phi^t(x)}, f \rangle_{\mathcal H} = \lim_{t\to 0} \langle k_{\Phi^t(x)} - k_x, f\rangle_{\mathcal H} / t$, $f \in \dom(V) $.
The above can be intuitively interpreted as an RKHS analog of the Liouville equation~\eqref{eq:liouville}.
However, the existence of an associated (semi)group of bounded operators on $\mathcal H$, analogous to the Perron-Frobenius semigroup of isometries on $L^1(X)$, is not guaranteed.

Even if~\eqref{eq:rkhs_liouville} has an associated semigroup of bounded operators, the Koopman--von Neumann approach as formulated in $L^p$ spaces does not directly translate to RKHSs:
First, for a general RKHS $\mathcal H$, the kernel sections $k_x$ need not have square roots in the same space.
Second, the symmetric part $(V + V^*)/2$ with respect to the RKHS inner product is not, in general, a multiplication operator.
This means that, even if well-defined as an element of $\mathcal H$, $\sqrt{k_{\Phi^t(x)}}$ does not evolve unitarily under a Koopman--von Neumann equation analogous to~\eqref{eq:kvn}.

\subsection{Our contributions}
\label{sec:contributions}

We propose a measure-free Koopman--von Neumann-type representation of continuous-time dynamical systems on manifolds.
Our approach is based on an RKHS, $\mathcal H$, of continuous functions on the state space manifold $X$, with a dense subspace, $\mathcal H_\infty$, on which the generator $V$ and its adjoint are jointly analytic.
Under these conditions, we build a dilation, $L$, of the antisymmetric part $(V - V^*)/2$ of the generator acting on the amplification $\mathfrak H = \ell^2(\mathbb N) \otimes \mathcal H$.
In \cref{thm:amplification}, we show that $L$ is an essentially skew-adjoint operator that contains sufficient information to reconstruct the Koopman evolution $t \mapsto f \circ \Phi^t$ of observables $f \in \mathcal H_\infty$under the dynamical flow $\Phi^t$ on $X$, for times $t$ in an $f$-dependent time interval, $\lvert t \rvert < t_f$, with $t_f > 0$ determined from the joint radius of convergence of the series
\begin{equation*}
	\sum_{n=0}^\infty \frac{t^n}{n!} V^n f, \qquad \sum_{n=0}^\infty \frac{t^n}{n!} (V^*)^n f,
\end{equation*}
in the norm topology of $\mathcal H$.

In more detail, the unitary evolution group generated by $L$ consistently recovers the non-unitary evolution of vectors $f \in \mathcal H_\infty$ generated by $V$ in the first component of a finite linear combination of tensors,\begin{equation}
	\label{eq:amplification_embedding}
	E_n f := \sum_{i=1}^n F_i e_i \otimes f,
\end{equation}
in the limit of $n \to \infty$, where $e_i$ are the canonical orthonormal basis vectors of $\ell^2(\mathbb N)$, and $F_i$ the Fibonacci numbers.
Examples of dynamical systems meeting the required analyticity conditions are systems on compact Lie groups generated by vector fields with polynomial coefficients in the Fourier basis.
This result opens the possibility to analyze the pointwise evolution of observables in RKHSs (including the ``off-attractor'' transient evolution) using spectral techniques for unitary evolution groups, analogously to spectral analysis of transfer and Koopman operators on $L^2$ in the measure-preserving case \cite{Walters81,EisnerEtAl15}.

In general, the evolution interval $t_f$ from \cref{thm:amplification} may be arbitrarily small for general analytic vectors $f$.
To overcome this limitation, we perform a further dilation of $L$ to a free derivation acting on a weighted symmetric Fock space, $\mathfrak F = F_w(\mathfrak H)$, generated by $\mathfrak H$.
For an appropriate choice of subconvolutive weights, $w\colon \mathbb N \to \mathbb R_{>0}$ (e.g., \cite{Feichtinger79,Grochenig07}), $\mathfrak F$ becomes a commutative Banach algebra with respect to the symmetric tensor product \cite{GiannakisEtAl25}.
By Gelfand duality, $\mathfrak F$ is isomorphic to a space of continuous complex-valued functions, $\widehat{\mathfrak F}$, on its character spectrum, $\sigma(\mathfrak F) \subset \mathfrak F^*$.
The function space $\widehat{\mathfrak F}$ has the structure of a ``reproducing kernel Hilbert algebra'' (RKHA) \cite{GiannakisMontgomery25}; that is, it is an RKHS with additional coalgebra structure, making it a commutative Banach algebra under the pointwise product of functions.
We lift $L$ to an essentially skew-adjoint operator, $\mathcal D$, on $\mathfrak F$, which acts as a free derivation with respect to the symmetric tensor product, $\mathcal D(\eta \vee \xi) = (\mathcal D \eta) \vee \xi + \eta \vee (\mathcal D \xi)$.
In \cref{thm:tensor_koopman}, we show that if $\mathcal H$ is a finitely generated unital RKHA, the unitary evolution generated by $\mathcal D$ reconstructs the Koopman evolution of observables in a dense subalgebra of $ \mathcal H$ for a time interval that is uniformly bounded away from 0.
As a corollary, iterating the evolutions obtained via this approach consistently approximates the Koopman evolution of observables with respect to the supremum norm over any finite time interval.

Another objective of this paper is to study the properties of the dynamical generator $\mathcal D$ from the perspective of noncommutative geometry \cite{Connes94,KhalkhaliMarcolli08}.
In Connes' noncommutative geometry \cite{Connes89}, a spectral triple, $(\mathbb A, \mathbb H, \mathbb D)$, consists of a unital $*$-algebra $\mathbb A \subseteq B(\mathbb H)$ of bounded operators acting on a Hilbert space, $\mathbb H$, and a self-adjoint operator, $\mathbb D$, with the properties that (i) the commutator $[\mathbb D, a]$ is a bounded operator on $\mathbb H$ for every $a \in \mathbb A$; (ii) $\mathbb D$ has compact resolvent.
The operator $\mathbb D$, called in this context Dirac operator, defines an extended pseudometric, $d\colon S(\mathbb A) \times S(\mathbb A) \to [0, \infty]$, on the state space $S(\mathbb A)$ of the $C^*$-algebra closure of $\mathbb A$,
\begin{equation*}
	d(\varphi, \psi) = \sup \{ \lvert \varphi(a) - \psi(a) \rvert: \text{$a \in \mathbb A$, $\lVert [\mathbb D, a] \rVert < 1$}\}.
\end{equation*}
The metric encodes analytical properties of the Dirac operator in the induced state space geometry of $S(\mathbb A)$.
Later on, Rieffel developed the theory of compact quantum metric spaces \cite{Rieffel99,Rieffel04}, where the commutator norm in the Connes metric is replaced by a Lipschitz seminorm that metrizes the weak$^*$ topology of $S(\mathbb A)$.
Sine the foundational works in \cite{Connes89,Rieffel99}, the theories of spectral triples and quantum metric spaces have been extended and generalized in many directions.
Examples include semifinite spectral triples \cite{CareyEtAl06a,CareyEtAl06b} and quantum locally compact metric spaces \cite{Latremoliere13}.

In our framework, the RKHA $\mathfrak F$ supports a family of finitely-generated $*$-algebras $\mathcal A_n \subset B(\mathfrak F)$, $n \in \mathbb N$, generated by creation and annihilation operators induced by vectors of the form $E_n f$, for $f$ in a finite-dimensional subspace $\mathcal S \subset \mathcal H$ and the Fibonacci embeddings $E_n$ from~\eqref{eq:amplification_embedding}.
The generators of $\mathcal A_n$ obey a weighted form of the canonical commutation relations (CCRs), and in contrast to generators of standard CCR algebras, they are bounded operators.
One of our main results is that $(\mathcal A_n, \mathfrak F, -i\overline{\mathcal D})$ defines a weak spectral triple, with the self-adjoint operator closure $-i\overline{\mathcal D}$ playing the role of a Dirac operator with noncompact resolvent.
Moreover, as $n \to \infty$, the unitary evolution generated by $\mathcal D$ reconstructs the Koopman evolution of observables in a dense subalgebra of $\mathcal H$ for the same uniform time interval as in \cref{thm:tensor_koopman}; see \cref{thm:state_evolution_convergence,thm:spectral_triple_uniform_convergence}.

The family of weak spectral triples $(\mathcal A_n, \mathfrak F, -i \mathcal D)$ has an associated embedding, $\Psi \colon X \to S(\mathcal A)$, of the state space manifold into the state space of a $C^*$-algebra $\mathcal A \subset B(\mathfrak F)$ that contains $\bigcup_{n \in \mathbb N}\mathcal A_n$ as a $*$-subalgebra.
Pulling back the corresponding pseudometric $d$ under $\Psi$ defines a pseudometric $d_X \colon X \times X \to [0, \infty]$ that depends on the Dirac operator $-i \mathcal D$, and thus the vector field $V$ of the dynamical system.
To characterize the behavior of $d_X$ we make two observations:
First, the embedding $\Psi$ induces a map $\widehat{\Psi} \colon \mathcal A \to C_b(X)$ that is an algebra homomorphism.
Second, for any $n \in \mathbb N$, $\widehat{\Psi}$ intertwines the Lie bracket $\adj_{\mathcal D} := [\mathcal D, \cdot]$ on $\mathcal A_n$ with the dynamical vector field $V$,
\begin{equation*}
	\widehat{\Psi} \circ \adj_{\mathcal D} = V \circ \widehat{\Psi}.
\end{equation*}
Using this property, we show that $d_X$ is bounded by the metric induced by a Lipschitz norm associated with $V$; see \cref{cor:quantum_metrics}.

In a final contribution, we extend the embedding $\Psi$ to the space of Borel probability measures on $X$, in a construction that is similar in spirit to the kernel mean embedding of probability measures in RKHS theory \cite{SriperumbudurEtAl10}*{and references therein}.
We describe how this extended embedding recovers the dynamical evolution of expectation values of observables in $\mathcal H$ with respect to arbitrary Borel probability measures on the state space manifold, including singular measures that cannot be represented by the standard Koopman--von Neumann method.

\subsection{Organization of the paper}

In \cref{sec:analyticity} we define the dynamical system under study, state our joint analyticity assumptions, and lay out basic results that stem from them.
This section also contains examples on compact and locally compact Lie groups that meet our basic assumptions.
In \cref{sec:first_amplification}, we describe our first amplification step utilizing the Fibonacci embedding from~\eqref{eq:amplification_embedding}.
This is followed by our second amplification to the weighted Fock space and the associated weak spectral triples, presented in \cref{sec:fock_space}.
In \cref{sec:state_space_embedding_and_metric}, we examine approaches for embedding the state space manifold $X$ into the state space, $S(\mathcal A)$, of the $C^*$-algebra $\mathcal A$, including the embedding $\Psi$ that induces a $C^*$-algebra homomorphism into $C_b(X)$.
We also study the properties of the induced noncommutative metric $d_X$ on state space associated with $\Psi$.
In \cref{sec:discussion} we give an overview of kernel mean embedding of Borel probability measures and our related approach for reconstructing the statistical evolution of observables through embeddings into $S(\mathcal A)$.
We conclude with a discussion drawing analogies between our measure-free approach and the Koopman--von Neumann method in $L^p$ spaces.

The paper contains three appendices:
\Cref{app:rkha} collects various basic properties of RKHAs.
\Cref{app:Fock_RKHA} gives an overview of the construction of the weighted Fock space $\mathfrak F$, summarizing results from \cite{GiannakisEtAl25}.
In \Cref{app:alt_first_amplification} we present an alternative first amplification approach to the Fibonacci scheme of \cref{sec:first_amplification}.
In particular, \Cref{thm:alt_amplification} in that appendix is analogous to \cref{thm:amplification}, and all results in the paper that depend on that theorem could be formulated using the amplification approach in \Cref{app:alt_first_amplification}.
For the reasons stated in \cref{rk:alt_first_amplification}, the Fibonacci amplification scheme is our method of choice, but we include the alternative approach in \cref{app:alt_first_amplification} as we believe it is of independent interest.

All Hilbert spaces in this paper will be over the complex numbers, and all inner products will be taken conjugate-linear in the first argument.
For $n \in \mathbb N$, $[n]$ shall denote the set $\{1, \ldots, n \}$.

\section{Dynamical system and joint analyticity}
\label{sec:analyticity}

We consider a dynamical flow, $\Phi^t \colon X \to X$, $t \in \mathbb R$, on a $C^\infty$ manifold $X$, generated by a $C^\infty$ vector field $\vec V \colon X \to T X$.
The vector field induces a derivation $V \colon C^\infty(X) \to C^\infty(X)$ by $V f = \vec V \cdot \nabla f$.
We fix an RKHS $\mathcal H \subset C^\infty(X)$ with reproducing kernel $k \colon X \times X \to \mathbb C$ and associated dense subspace $\tilde{\mathcal H} = \spn\{k_x : x \in X\} \subset \mathcal H$ spanned by the kernel sections $k_x = k(x,\cdot)$.
As an unbounded operator from $\mathcal H$ to itself, define the domain of $V$ to be
\begin{equation*}
	\dom(V) = \{f \in \mathcal H : Vf \in \mathcal H\}.
\end{equation*}

Throughout this paper, we will make the standing assumption that the reproducing kernel $k$ is strictly positive-definite; that is, for every finite set $ \{ x_i \}_{i=1}^n \subset X$ consisting of distinct points $x_i$ and nonzero numbers $c_1, \ldots, c_n \in \mathbb C$, we have $\sum_{i,j=1}^n \overline{c_i} k(x_i, x_j) c_j > 0$.
This is equivalent to linear independence of any finite set $\{ k_{x_i} \}_{i=1}^n \subseteq \mathcal H$ whenever $x_1, \ldots, x_n \in X$ are distinct.
Since the manifold $X$ is uncountable as a set, linear independence of the kernel sections means that $\mathcal H$ is necessarily infinite-dimensional.
Moreover, the canonical feature map $\kappa\colon X \to \mathcal H$,
\begin{equation}
	\label{eq:canonical_feature_map}
	\kappa(x) = k_x,
\end{equation}
is injective.

In many of the examples that we consider the reproducing kernel $k$ is, additionally, universal \cite{MicchelliEtAl06}.
This means that for any compact subset $Y \subseteq X$, $\mathcal H\rvert_Y$ is a dense subspace of $C(Y)$.
In general, universality and strict positive-definiteness  are independent notions, but they are equivalent in particular cases such as radial kernels on $\mathbb R^d$ \cite{SriperumbudurEtAl11}.
While universality is certainly a desirable property to have in applications (see, e.g., \cref{rk:uniform_approx} ahead) it is not necessary to carry out our constructions.
We refer the reader to \cite{PaulsenRaghupathi16,FerreiraMenegatto13,SteinwartChristmann08} for detailed expositions of RKHS theory.

\subsection{Transfer and Koopman operators on reproducing kernel Hilbert spaces}

For each $t \in \mathbb R$ we define, similarly to \cref{sec:motivation}, the densely defined linear map $P^t\colon \tilde{\mathcal H} \to \mathcal H$ by linear extension of
\begin{equation}
	\label{eq:transfer_op}
	P^t k_x = k_{\Phi^t(x)}.
\end{equation}
Note that linear independence of the kernel sections $k_x$ is important for the well-definition of $P^t$ through~\eqref{eq:transfer_op}.
By construction, the subspace $\tilde{\mathcal H}$ is invariant under $P^t$, so $ \{ P^t\}_{t\in \mathbb R}$ forms a group of densely defined operators under composition.
While $P^t$ need not be closable, its adjoint is a Koopman (composition) operator by the flow.
Being the predual of a composition operator, we thus think of $P^t$ as an RKHS version of the transfer operator.
Variants of the following result are well-known in the literature; e.g., \cite{IkedaEtAl22,IkedaEtAl22b,BoulleEtAl25}.

\begin{lem}
	For $t \in \mathbb R$, the adjoint $U_t := (P^t)^* \colon \dom(U_t) \to \mathcal H$ has domain $\dom(U_t) = \{f \in \mathcal H: f \circ \Phi^t\ \in \mathcal H\} \subseteq \mathcal H$.
	Furthermore, $U_t$ acts as a composition operator on its domain, $U_t f = f \circ \Phi^t$.
\end{lem}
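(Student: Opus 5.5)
The plan is to characterize the adjoint directly from its definition, using the reproducing property on the dense subspace $\tilde{\mathcal H}$. Recall that $f \in \dom((P^t)^*)$ if and only if there is $g \in \mathcal H$ with $\langle P^t h, f\rangle_{\mathcal H} = \langle h, g\rangle_{\mathcal H}$ for all $h \in \tilde{\mathcal H}$. In that case $g = (P^t)^* f$, and $g$ is unique because $\tilde{\mathcal H}$ is dense. By linearity it suffices to test against the spanning vectors $h = k_x$, $x \in X$.

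\emph{Inclusion $\dom(U_t) \subseteq \{f : f\circ\Phi^t \in \mathcal H\}$ and the formula.} Suppose $f \in \dom(U_t)$ and set $g = U_t f$. For every $x \in X$ we compute, using the reproducing property $\langle k_y, f\rangle_{\mathcal H} = f(y)$ (inner products are conjugate-linear in the first slot):
\begin{equation*}
	g(x) = \langle k_x, g\rangle_{\mathcal H} = \langle P^t k_x, f\rangle_{\mathcal H} = \langle k_{\Phi^t(x)}, f\rangle_{\mathcal H} = f(\Phi^t(x)).
\end{equation*}
Hence $f\circ\Phi^t = g \in \mathcal H$, and $U_t f = f \circ \Phi^t$.

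\emph{Reverse inclusion.} Suppose $f \in \mathcal H$ and $g := f\circ\Phi^t \in \mathcal H$. The same chain of equalities, read backwards, gives $\langle P^t k_x, f\rangle_{\mathcal H} = \langle k_x, g\rangle_{\mathcal H}$ for all $x$. Conjugate-linearity in the first argument extends this to all $h \in \tilde{\mathcal H}$. Thus $h \mapsto \langle P^t h, f\rangle_{\mathcal H}$ is the bounded functional $h \mapsto \langle h, g\rangle_{\mathcal H}$ on $\tilde{\mathcal H}$, so $f \in \dom((P^t)^*)$ with $(P^t)^* f = g$.

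There is no real obstacle here. The only points needing care are that $P^t$ is well defined, which uses linear independence of the kernel sections (strict positive-definiteness), and that $P^t$ is densely defined, so that the adjoint exists as a single-valued operator. Both are already in place by the standing assumptions.
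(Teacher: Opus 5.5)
Your proof is correct and follows essentially the same route as the paper. For $f \in \dom(U_t)$ you evaluate $U_t f$ at $x$ via $\langle k_x, U_t f\rangle_{\mathcal H} = \langle P^t k_x, f\rangle_{\mathcal H} = f(\Phi^t(x))$; conversely, when $f\circ\Phi^t \in \mathcal H$, you verify the adjoint relation on the kernel sections $k_x$ and extend it by linearity to $\tilde{\mathcal H}$.
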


\begin{proof}
	Fix $t \in \mathbb R$ and suppose $f \circ \Phi^t \in \mathcal H$ for $f \in \mathcal H$.
	Then, for every $x \in X$,
	\begin{displaymath}
		\langle P^t k_x, f \rangle_{\mathcal H} = \langle k_{\Phi^t(x)}, f \rangle_{\mathcal H} = f(\Phi^t(x)) = (f \circ \Phi^t)(x) = \langle f \circ \Phi^t, k_x \rangle_{\mathcal H}.
	\end{displaymath}
	Now for every $g \in \tilde{\mathcal H}$ we have $g = \sum_{i=1}^n c_i k_{x_i}$ for some $n \in \mathbb N$, $c_i \in \mathbb C$, and $x_i \in X$, so
	\begin{equation*}
		\lvert\langle f, P^t g \rangle_{\mathcal H} \rvert = \left\lvert\sum_{i=1}^n \langle f, c_i P^t k_{x_i} \rangle_{\mathcal H}\right\rvert = \left\lvert\sum_{i=1}^n \langle f \circ \Phi^t, c_i k_{x_i}\rangle_{\mathcal H} \right\rvert = \lvert\langle f \circ \Phi^t, g \rangle_{\mathcal H} \rvert \leq  \lVert f \circ \Phi^t \rVert_{\mathcal H}\lVert g \rVert_{\mathcal H}.
	\end{equation*}
	This implies that $g \mapsto \langle f, P^t g\rangle_{\mathcal H}$ is a bounded linear functional on $\tilde{\mathcal H}$ with operator norm bounded by $\lVert f \circ \Phi^t \rVert_{\mathcal H}$, and thus that $f \in \dom(U_t)$.

	Now suppose that $f \in \dom(U_t)$.
	Then $\langle k_x, U_t f \rangle_{\mathcal H}$ is well-defined for every $x \in X$ and
	\begin{equation*}
		(U_t f)(x) = \langle k_x, U_t f\rangle_{\mathcal H} = \langle P^t k_x, f \rangle_{\mathcal H} = \langle k_{\Phi^t(x)}, f \rangle_{\mathcal H} = f(\Phi^t(x)).
	\end{equation*}
	Thus, we have $(U_t f)(x) = (f \circ \Phi^t)(x)$ which implies that $f \circ \Phi^t \in \dom(U_t)$ and that $U_t$ acts as a composition operator on its domain.
\end{proof}

Note that, unlike $P^t$, the composition operators $U_t$ need not be densely defined and, in general, do not form a group.
To make further progress, we will require that $V$ and its adjoint, $V^*$, on $\mathcal H$ satisfy a joint analyticity condition on a common, dense, invariant subspace of their domains.

\begin{asmp}
	\label{asmp:main}
	The following hold.
	\begin{enumerate}[(a)]
		\item $V$ is continuous as a map from $\mathcal H$ to $C(X)$ with the compact open topology.
		\item There exists a dense subspace $\mathcal H_\infty \subseteq \dom(V) \cap \dom(V^*)$ such that $V(\mathcal H_\infty) \subseteq \mathcal H_\infty$ and $V^*(\mathcal H_\infty) \subseteq \mathcal H_\infty$.
		\item $V$ and $V^*$ are jointly analytic on $\mathcal H_\infty$.
		      That is, for $V_1 = V$ and $V_2 = V^*$, and for every $f \in \mathcal H_\infty$ there exists a strictly positive radius of convergence $|z| < R_f$ for the series
		      $$ \sum_{n=0}^\infty \frac{z^n}{n!} \sum_{\epsilon \colon [n] \to [2]} \left\lVert V_{\epsilon(1)} \cdots V_{\epsilon(n)} f \right\rVert_{\mathcal H}.$$
	\end{enumerate}
\end{asmp}

\begin{rk}
	\Cref{asmp:main}(a) implies that $V$ is a closed operator, and the subspace $\tilde{\mathcal H} \subset \mathcal H$ spanned by the kernel sections lies in $\dom(V^*)$.
	The Liouville equation~\eqref{eq:rkhs_liouville} then follows as a consequence of this fact.
\end{rk}
\begin{proof}
	By \cref{asmp:main}(a), for every compact set $Y \subseteq X$ there is a constant $C_Y$ such that for every $f \in \mathcal H$
	$$\lVert Vf|_Y \rVert_\infty \leq C_Y \lVert f \rVert_\mathcal H.$$
	Hence, if $\{(f_i, Vf_i)\}_i \subset \mathcal H \times \mathcal H$ is a sequence in the graph of $V$ converging to $(f,g) \in \mathcal H \times \mathcal H$, then for all compact sets $Y \subseteq X$
	$$\lim_{i \to \infty} \lVert Vf_i|_Y -Vf|_Y\rVert_\infty \leq \lim_{i \to \infty} C_Y \lVert f_i - f \rVert_\mathcal H = 0.$$
	Since $\mathcal H \subset C(X)$, the kernel embedding map is continuous in the weak topology of $\mathcal H$.
	The uniform boundedness principle implies that $\sup_{x\in Y} \lVert k_x\rVert \leq C$, and so
	$$\lim_{i \to \infty} \lVert Vf_i|_Y - g|_Y\rVert_\infty \leq \lim_{i \to \infty} C \lVert Vf_i - g\rVert_\mathcal H = 0$$
	(i.e., $Vf_i$ converges to $g$ in the compact open topology).
	This means $Vf = g \in \mathcal H$ and $f \in \dom(V)$.

	To see why $k_x \in \dom(V^*)$, set $Y = \{x\}$ and observe that
	$$\lvert \langle k_x, Vf \rangle_{\mathcal H} \rvert =\lvert Vf(x)\rvert \leq C_Y \lVert f \rVert_{\mathcal H} \quad \text{for } f\in \dom(V),$$
	making $\dom(V) \ni f \mapsto \langle k_x, Vf \rangle_{\mathcal H}$ bounded.
\end{proof}

Since the unbounded operator $V$ is not necessarily skew-adjoint, and $U_t$ need not be strongly continuous (see \cref{sec:motivation}), many standard theorems on semigroups and Koopman operators do not apply.
The following proposition shows that exponentiation of $V$ recovers the Koopman operator on analytic vectors in $\mathcal H_\infty$.

\begin{prop}
	\label{prop:taylor_koopman}
	Suppose \cref{asmp:main} holds so that for any observable $f \in \mathcal H_\infty$ there exists a strictly positive radius of convergence $t_f$ of $\sum_{n=0}^\infty \frac{t^n}{n!} V^nf$.
	Then, for all evolution times $\lvert t \rvert < t_f$, we have
	\begin{equation*}
		U_t f = e^{t V} f \equiv \sum_{n=0}^\infty \frac{t^n}{n!} V^n f.
	\end{equation*}
\end{prop}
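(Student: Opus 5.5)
Fix $f \in \mathcal H_\infty$ and define $g(t) := \sum_{n=0}^\infty \frac{t^n}{n!} V^n f$ for $\lvert t \rvert < t_f$. This series converges absolutely in $\mathcal H$ on that range. The plan is to show, pointwise in $x \in X$, that $g(t)(x) = f(\Phi^t(x))$. Once this holds we have $f\circ\Phi^t = g(t) \in \mathcal H$, so $f \in \dom(U_t)$ by the preceding lemma, and $U_t f = g(t)$.

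\textbf{Step 1: the series solves the evolution equation.} Since $V(\mathcal H_\infty)\subseteq\mathcal H_\infty$, the vector $Vf$ also lies in $\mathcal H_\infty$. Its series has the same radius of convergence, being the term-wise derivative of a power series with coefficients in $\mathcal H$. Hence $g$ is norm-differentiable on $\lvert t\rvert<t_f$ with
\[
g'(t) = \sum_{n=0}^\infty \frac{t^n}{n!} V^{n+1} f.
\]
Next we show that $g(t) \in \dom(V)$ and $V g(t) = g'(t)$. The partial sums $g_N(t)$ lie in $\dom(V)$, with $g_N(t) \to g(t)$ and $V g_N(t) \to g'(t)$ in $\mathcal H$. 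Since $V$ is closed (the Remark following \cref{asmp:main}), the claim follows. Evaluating at a point via the reproducing property gives, for every $x$,
\[
\partial_t \bigl(g(t)(x)\bigr) = \langle k_x, g'(t)\rangle_{\mathcal H} = (V g(t))(x) = \bigl(\vec V\cdot\nabla\, g(t)\bigr)(x).
\]
So $u(t,x) := g(t)(x)$ satisfies the transport equation $\partial_t u = \vec V \cdot \nabla u$ with $u(0,\cdot) = f$.

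\textbf{Step 2: uniqueness along characteristics.} Fix $x$ and $s$ with $\lvert s\rvert < t_f$, and consider $h(\tau) := u(s-\tau, \Phi^\tau(x))$ for $\tau$ between $0$ and $s$. If $u$ is jointly $C^1$, the chain rule gives
\[
h'(\tau) = -\partial_t u + (\vec V\cdot\nabla u)(s-\tau,\Phi^\tau(x)) = 0.
\]
Hence $h(0) = h(s)$, that is, $g(s)(x) = f(\Phi^s(x))$, which is the required identity.

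\textbf{Main obstacle: joint differentiability of $u$.} The chain rule in Step 2 needs $u$ to be $C^1$ jointly in $(t,x)$; differentiability in $t$ at fixed $x$ is not enough. This is where \cref{asmp:main}(a) enters. By the Remark's argument, on each compact $Y\subseteq X$ both $\lVert \cdot|_Y\rVert_\infty$ and $\lVert V\cdot|_Y\rVert_\infty$ are bounded by constants times $\lVert\cdot\rVert_{\mathcal H}$. Therefore norm-continuity of $t\mapsto g(t)$ and $t\mapsto g'(t) = Vg(t)$ gives local uniform continuity of $u$, of $\partial_t u$, and of $\vec V\cdot\nabla u$.

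Full spatial $C^1$ regularity need not follow from these bounds. To avoid needing it, rewrite the argument along the curve directly:
\[
\frac{d}{d\tau}\, g(s-\tau)(\Phi^\tau(x))
\]
splits into a time increment, controlled by the norm derivative together with the uniform bound on compacts, and a spatial increment along the flow line. The spatial increment is $\frac{d}{d\tau} w(\Phi^\tau(x)) = (Vw)(\Phi^\tau(x))$ for the fixed smooth function $w = g(s-\tau)$. The cross term is handled by uniform continuity of $(\sigma,\tau)\mapsto (Vg(\sigma))(\Phi^\tau(x))$ on the compact set $\{\Phi^\tau(x)\}$ together with a mean-value argument.

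An alternative route is to pair with $k_{\Phi^\tau(x)}$. Using $k_y\in\dom(V^*)$ and equation~\eqref{eq:rkhs_liouville}, we can write $h(\tau) = \langle k_{\Phi^\tau(x)}, g(s-\tau)\rangle$ and differentiate this product of a weakly differentiable and a strongly differentiable function. This gives $h'(\tau) = \langle V^*k, g\rangle - \langle k, Vg\rangle = 0$. I would try this second route first, justifying~\eqref{eq:rkhs_liouville} as a weak derivative via the pointwise chain rule on functions in $\dom(V)$.
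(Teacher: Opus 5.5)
Your proposal is correct and follows the paper's proof. The paper's structure is the same: first show $e^{tV}f\in\dom(V)$ with $Ve^{tV}f=\frac{d}{dt}e^{tV}f$ (you use closedness of $V$, the paper redoes the compact-open estimate directly), then show the evolved function is constant along flow lines, as the paper does with $t\mapsto f_t(\Phi^{-t}(x))$. Your extra care in Step 2 is a genuine improvement: replacing the joint chain rule by norm differentiability of $t\mapsto e^{tV}f$, together with local boundedness and weak continuity of $y\mapsto k_y$ on the compact orbit segment, justifies a step that the paper's chain-rule computation takes for granted.
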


\begin{proof}
	Fix a point $x \in X$.
	Then, for $t \in (-t_f,t_f)$ and by absolute convergence of $e^{Vt}f$,
	$$\frac{d}{dt} e^{Vt}f(x) = \sum_{n=0}^\infty \frac{t^n}{n!} V^{n+1} f(x) = e^{Vt}Vf(x).$$
	Since $V \colon \mathcal H \to C(X)$ is continuous in the compact open topology, for every compact set $Y \subseteq X$ there is a constant $C_Y$ such that
	$$\lVert Vf|_Y \rVert_{\infty} \leq C_Y \lVert f \rVert_\mathcal H.$$
	Again by absolute convergence of $e^{Vt}f$ in $\mathcal H$,
	\begin{align*}
		\lim_{N\to \infty} \left\lVert \left.\left( \sum_{n=0}^N \frac{t^n}{n!}V^n (Vf)\right)\right|_Y- \left.
		V e^{Vt}f \right|_Y \right\rVert_\infty & =
		\lim_{N\to \infty} \left\lVert \left.
		V \left( \sum_{n=N+1}^\infty \frac{t^n}{n!}V^n f\right)\right|_Y \right\rVert_\infty                                                                      \\
		                                        & \leq \lim_{N \to \infty} C_Y \left\lVert \sum_{n=N+1}^\infty \frac{t^n}{n!}V^{n} f \right\rVert_\mathcal H = 0.
	\end{align*}
	This means $e^{Vt}f$ is in the domain of $V$ and $Ve^{Vt}f = e^{Vt}Vf \in \mathcal H$.

	Next, define the following functions, $f_t = e^{Vt}f$ and $g(t) = f_t(\Phi^{-t}(x))$.
	Observe that $g(0) = f(x)$ and
	\begin{align*}
		\frac{d}{dt} g(t) & = \left.\frac{df_\tau}{d\tau}\right|_{\tau=t}\left(\Phi^{-t}(x)\right) + df_t \left( \frac{d}{dt} \Phi^{-t}(x)\right) \\
		                  & = (Ve^{Vt}f)(\Phi^{-t}(x)) + df_t( -V_{\Phi^{-t}(x)} )                                                                \\
		                  & = (V f_t)(\Phi^{-t}(x)) - V_{\Phi^{-t}(x)} f_t                                                                        \\
		                  & =V_{\Phi^{-t}(x)} f_t - V_{\Phi^{-t}(x)} f_t                                                                          \\
		                  & =0.
	\end{align*}
	Since $g$ is a smooth function, $g(t) = f(x)$ for all $t \in (-t_f,t_f)$.
	Unpacking the definition of $g(t)$ we find
	$f(x) = (e^{Vt}f)(\Phi^{-t}(x))$ and so $f(\Phi^t(x)) = e^{Vt}f(x)$.
\end{proof}

\subsection{Transfer and Koopman operators on reproducing kernel Hilbert algebras}

With only \cref{asmp:main} we cannot guarantee that the Koopman operator $U_t$ is densely defined for any fixed time $t \in \mathbb R$.
More importantly, we wish for $e^{Vt}$ to uniquely determine $U_t$.
To address this question we will require that the RKHS $\mathcal H$ additionally possesses RKHA structure.
We begin by recalling the following definition from \cite{GiannakisMontgomery25}.

\begin{defn}
	\label{def:rkha}
	An RKHS $\mathcal H$ on a set $X$ with reproducing kernel $k\colon X \times X \to \mathbb C$ is said to be an RKHA if the map $\Delta\colon k_x \mapsto k_x \otimes k_x$  extends to a bounded linear map $\Delta \colon \mathcal H \to \mathcal H \otimes \mathcal H$.
\end{defn}

Since
\begin{equation*}
	\langle k_x, \Delta^*(f \otimes g)\rangle_{\mathcal H} = \langle \Delta k_x, f \otimes g \rangle_{\mathcal H \otimes \mathcal H} = \langle k_x \otimes k_x, f \otimes g\rangle_{\mathcal H \otimes \mathcal H} = \langle k_x, f\rangle_{\mathcal H} \langle k_x, g\rangle_{\mathcal H} = f(x)g(x),
\end{equation*}
it follows that $\Delta^*$ is a bounded linear map that implements pointwise multiplication.
Similarly, the map $\Delta_n \colon \mathcal H \to \mathcal H^{\vee n} \subset \mathcal H^{\otimes n}$ given by the unique linear extension of $\Delta_n k_x = k_x^{\vee n}$ is a bounded operator and hence $\Delta_n^* \colon \mathcal H^{\otimes n} \to \mathcal H$ implements $n$-fold pointwise multiplication and is bounded.
For an explicit operator norm bound observe that
$$\Delta_2 = \Delta, \quad \Delta_n = (\Delta \otimes \Id_{\mathcal H^{\otimes (n-1)}})\circ \Delta_{n-1}.$$

By virtue of these properties, every RKHA $\mathcal H$ is simultaneously a Hilbert function space and commutative algebra with respect to pointwise function multiplication.
Moreover, since
\begin{equation}
	\label{eq:rkha_kernel_bound}
	\lVert k_x \rVert_{\mathcal H} = \frac{\lVert k_x \otimes k_x \rVert_{\mathcal H \otimes \mathcal H}}{\lVert k_x \rVert_{\mathcal H}} = \frac{\lVert \Delta k_x \rVert_{\mathcal H}}{\lVert k_x \rVert_{\mathcal H}} \leq \lVert \Delta \rVert,
\end{equation}
and
\begin{equation}
	\label{eq:rkha_boundedness}
	\lvert f(x) \rvert = \lvert \langle k_x, f \rangle_{\mathcal H} \rvert \leq \lVert k_x \rVert_{\mathcal H} \lVert f \rVert_{\mathcal H},
\end{equation}
it follows that an RKHA contains only bounded functions.
Further discussion on RKHAs is included in \cref{app:rkha}.
Concrete examples will be given in \cref{sec:poly_examples}.
For a subset $\mathcal S$ of an RKHA $\mathcal H$, $\alg(\mathcal S) \subseteq \mathcal H$ will denote the smallest subalgebra of $\mathcal H$ containing $\mathcal S$.

\begin{asmp}
	\label{asmp:alg}
	The RKHS $\mathcal H$ is an RKHA.
	Moreover, there is a finite-dimensional subspace $\mathcal S \subset \mathcal H_\infty$ such that $\mathcal H = \overline{\alg(\mathcal S)}$.
\end{asmp}

\Cref{asmp:alg} is essentially a finite generation requirement for the RKHA $\mathcal H$.
This property will allow us to prove dense definition of $U_t$ for sufficiently small $\lvert t \rvert > 0$.

\begin{prop}\label{prop:algdomain}
	Under \cref{asmp:main,asmp:alg}, if $e^{Vt}f$ converges absolutely for $f \in \mathcal S$, then $e^{Vt}f$ converges absolutely for $f \in \alg(\mathcal S)$.
	Furthermore, $e^{Vt} \colon \alg(\mathcal S) \to \mathcal H$ is an algebra homomorphism.
\end{prop}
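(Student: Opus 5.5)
We argue via the Leibniz rule for $V$ on products together with the boundedness of the RKHA multiplication $\Delta^*$. The key point is that $V$ is a derivation on $C^\infty(X)$. So for $f,g \in \mathcal H_\infty$ with $fg \in \dom(V)$ we have $V(fg) = (Vf)g + f(Vg)$ pointwise. Iterating, whenever the relevant products lie in $\mathcal H$,
\begin{equation*}
	V^n(f_1 \cdots f_m) = \sum_{k_1+\cdots+k_m = n} \frac{n!}{k_1!\cdots k_m!} (V^{k_1}f_1)\cdots(V^{k_m}f_m).
\end{equation*}
Since $\mathcal H$ is an RKHA, every such product of elements of $\mathcal H$ lies in $\mathcal H$. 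Moreover, by the bound on $\Delta_m^*$ we have $\lVert h_1\cdots h_m\rVert_{\mathcal H} \le C_m \lVert h_1\rVert_{\mathcal H}\cdots\lVert h_m\rVert_{\mathcal H}$, with $C_m \le \lVert\Delta\rVert^{m-1}$.

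\textbf{Step 1 (domain).} First I would check that $\mathcal H_\infty$ contains $\alg(\mathcal S)$ in the sense needed, namely that $V^n(f_1\cdots f_m)$ is in $\mathcal H$ and is given by the Leibniz formula. For this I would use the closedness of $V$ and its continuity into $C(X)$ (\cref{asmp:main}(a)). The pointwise derivative of the product $fg$ equals the $\mathcal H$-element $(Vf)g+f(Vg)$, and since $\mathcal H \subset C(X)$ with consistent pointwise evaluation, this shows $fg \in \dom(V)$. Induction on $n$ and $m$ then gives the formula above.

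\textbf{Step 2 (absolute convergence).} Fix a monomial $f_1\cdots f_m$ with $f_i \in \mathcal S$, and let $t$ be such that $\sum_k \frac{|t|^k}{k!}\lVert V^k f_i\rVert$ converges for every $i$. Then
\begin{equation*}
	\sum_n \frac{|t|^n}{n!}\lVert V^n(f_1\cdots f_m)\rVert \le C_m \prod_{i=1}^m \sum_{k}\frac{|t|^{k}}{k!}\lVert V^{k}f_i\rVert_{\mathcal H} < \infty.
\end{equation*}
This is just the Cauchy product of absolutely convergent series. Linearity then extends absolute convergence to all of $\alg(\mathcal S)$.

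\textbf{Step 3 (homomorphism).} Linearity of $e^{Vt}$ is immediate. For multiplicativity I would use the same Cauchy-product rearrangement, now applied to the series in $\mathcal H$:
\begin{equation*}
	e^{Vt}(fg) = \sum_n \sum_{j+k=n}\frac{t^j}{j!}\frac{t^k}{k!}(V^jf)(V^kg) = \Delta^*\Bigl(\sum_j \tfrac{t^j}{j!}V^jf \otimes \sum_k \tfrac{t^k}{k!}V^kg\Bigr) = (e^{Vt}f)(e^{Vt}g).
\end{equation*}
Here $f,g\in\alg(\mathcal S)$. The rearrangement is justified by absolute convergence of the double series in $\mathcal H\otimes\mathcal H$ and by the boundedness of $\Delta^*$.

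An alternative route to multiplicativity is \cref{prop:taylor_koopman}, which identifies $e^{Vt}f = f\circ\Phi^t$ pointwise; multiplicativity is then trivial. The direct computation, however, avoids needing the radius $t_f$ for each product separately.

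The main obstacle I expect is Step 1: justifying rigorously that products of elements of $\mathcal H_\infty$ lie in $\dom(V^n)$ with the Leibniz formula holding in $\mathcal H$. Pointwise Leibniz is clear from smoothness. The issue is membership in $\mathcal H$, but this follows because the right-hand side is a finite sum of products of $\mathcal H$-elements, hence in $\mathcal H$ by the RKHA property. The definition $\dom(V)=\{f: Vf\in\mathcal H\}$ then gives membership directly.
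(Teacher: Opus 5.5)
Your proposal is correct and follows essentially the paper's proof: the Leibniz rule for $V$, boundedness of multiplication in the RKHA, and a Cauchy-product estimate give absolute convergence, and the same computation without norms gives multiplicativity. The differences are cosmetic. You treat $m$-fold monomials at once with constant $\lVert\Delta\rVert^{m-1}$ rather than inducting on pairwise products, and you spell out the domain check (that products of elements of $\mathcal H_\infty$ lie in $\dom(V^n)$) which the paper leaves implicit.
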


\begin{proof}
	It suffices to show that if $e^{Vt}f$ and $e^{Vt}g$ converge absolutely, then so does $e^{Vt}(f g)$.
	We may verify this directly using the derivation property of $V$ and boundedness of multiplication in RKHAs:
	\begin{align*}
		\sum_{n=0}^\infty \frac{|t|^n}{n!} \lVert V^n(f g) \rVert_{\mathcal H} & = \sum_{n=0}^\infty \frac{|t|^n}{n!} \left\lVert\sum_{k=0}^n \frac{n!}{k! (n-k)!} (V^k f)  (V^{n-k}g) \right\rVert_{\mathcal H}                                                     \\
		\\ &\leq C \sum_{n=0}^\infty \sum_{k=0}^n \frac{ |t|^n}{k! (n-k)!} \lVert V^k f\rVert_{\mathcal H} \lVert V^{n-k}g\rVert_{\mathcal H} \\
		                                                                       & = C \left( \sum_{n=0}^\infty \frac{|t|^n}{n!} \lVert V^n f \rVert_{\mathcal H} \right) \left( \sum_{m=0}^\infty \frac{|t|^m}{m!} \lVert V^m g \rVert_{\mathcal H} \right) < \infty.
	\end{align*}
	If we perform the same computation without norms or absolute values, we see that $e^{Vt}(fg) = (e^{Vt}f) (e^{Vt}g)$.
	Hence, $e^{Vt}$ is an algebra homomorphism.
\end{proof}

Now under \cref{asmp:alg} $e^{Vt}$ is densely defined for some time interval $t \in (-T,T)$, $T>0$, and $U_t|_{\alg(\mathcal S)} = e^{Vt}|_{\alg(\mathcal S)}$ since $U_t$ is also an algebra homomorphism.
The evolution of kernel sections $k_x$, and thus, by injectivity of the feature map~\eqref{eq:canonical_feature_map}, the pointwise dynamics on $X$, can then be recovered from $e^{Vt}$ by taking the adjoint.
Indeed, since $U_t$ is densely defined, $P^t$ is closable and
$$\text{span}\{k_x: x\in X\} \equiv \tilde{\mathcal H} \equiv \dom(P^t) \subseteq \text{dom}(U_t^*) = \text{dom}(\overline{P^t}) \subseteq \text{dom}\left( (e^{Vt})^*\right),$$
giving
\begin{equation*}
	k_{\Phi^t(x)} = P^t k_x = (e^{Vt})^* k_x.
\end{equation*}
Note, however, that in general $(e^{Vt})^* k_x$ need not be expressible as an application of $e^{V^* t}$ to $k_x$ since the kernel sections may not be in the domain of $e^{V^*t}$.

The goal of \cref{sec:first_amplification,sec:fock_space} ahead is to construct a unitary evolution group on an amplification of $\mathcal H$, in such a way that the non-unitary dynamics under $e^{tV}$ can be recovered.

\subsection{Vector fields with trigonometric polynomial coefficients}
\label{sec:poly_examples}

Before moving forward, consider the example of a smooth vector field $\vec V\colon X \to TX$ on the $d$-dimensional torus, $X = \mathbb T^d$, with the invariant frame $\left\{\frac{\partial\ }{\partial \theta_j}\colon X \to TX \right\}_{j=1}^d$,
\begin{equation}
	\label{eq:v_poly}
	\vec V(\theta) = \sum_{j=1}^d p_j(e^{\pm i \theta_1}, \ldots,e^{\pm i \theta_d}) \, \frac{\partial\ }{\partial \theta_j}, \quad \theta = (\theta_1, \ldots, \theta_d).
\end{equation}
Here, $p_j(e^{\pm i \theta_1},\ldots,e^{\pm i \theta_d})$ are real-valued polynomials and $\theta_j \in [0, 2\pi)$ are standard angle coordinates.
For the RKHS $\mathcal H$, let
\begin{equation*}
	k(\theta, \theta') = \sum_{m \in \mathbb Z^d} \omega^{-2}(m) e^{i m \cdot (\theta-\theta')},
\end{equation*}
where the weights $\omega \colon \mathbb Z^d \to \mathbb R_{>0}$ are normalized, inverse square-summable, and subconvolutive on the dual group $\widehat{\mathbb T^d} \cong \mathbb Z^d $,
\begin{equation}
	\label{eq:subconv}
	\quad \omega(0) = 1, \quad \sum_{m \in \mathbb Z^d} \omega^{-2}(m) < \infty, \quad (\omega^{-2} * \omega^{-2})(m) \leq C \omega^{-2}(m).
\end{equation}
Weights meeting these conditions include the subexponential family,
\begin{equation}
	\label{eq:subexp_weights}
	\omega(m) = e^{\tau \lvert m \rvert^p}, \quad \tau >0, \quad p \in (0,1),
\end{equation}
where $\lvert m \rvert^p := \lvert m_1 \rvert^p + \ldots + \lvert m_d \rvert^p $, among other examples from time--frequency analysis on locally compact abelian (LCA) groups \cite{Feichtinger79,Grochenig07}.
As shown in \cite{DasGiannakis23,DasEtAl23}, the corresponding RKHS $\mathcal H$ is a unital RKHA, with the constant function $1_X\colon X \to \mathbb C$, $1_X(x) = 1$, acting as the unit.
Clearly, $\alg(\{1_X,e^{\pm i \theta_1},\ldots,e^{\pm i \theta_d} \})$ is dense in $\mathcal H$ so \cref{asmp:alg} is satisfied.

This RKHA also satisfies \cref{asmp:main}(a, b) for any vector field with polynomial coefficients and with $\mathcal H_\infty = \text{span} \{\theta \mapsto e^{ij\cdot \theta} : j \in \mathbb Z^d\}$.
To see why, we filter $\mathcal H$ with bandlimited functions,
\begin{equation}
	\label{eq:h_bandlimited}
	\mathcal H_l = \left\{ f\in \mathcal H: \supp(\hat{f}) \subseteq [-l,l]^d \right\} \subset \mathcal H_\infty, \quad l \in \mathbb N_0,
\end{equation}
where $\hat f \in c_0(\mathbb Z^d)$ denotes the Fourier transform of $f \in \mathcal H$.
To verify \cref{asmp:main}(b) observe that multiplication by polynomials, $p_j(e^{\pm i \theta_1},\cdots, e^{\pm \theta_d})$, is bounded as an operator, $M_{p_j}$, from $\mathcal H$ to itself.
In particular, letting $H \ominus K$ denote the orthogonal complement of a subspace $K \subseteq H$ of a Hilbert space $H$, and $p = \max_j \deg p_j$, then
$$M_{p_j}, M_{p_j}^* \colon \mathcal H_l \ominus \mathcal H_{l'} \to \mathcal H_{l+p+1} \ominus \mathcal H_{l'-p-1} $$
are well-defined bounded linear maps for all $l,l' \in \mathbb N_0$, $l' \leq l$.
Moreover, the invariant vector fields are well-defined as operators $\frac{\partial}{\partial \theta_m} \colon \mathcal H_l \to \mathcal H_l$.
By~\eqref{eq:v_poly}, we have $\langle g, V f\rangle_{\mathcal H} =0$ for $g \in \mathcal H_l$ and $f \in \dom(V) \ominus \mathcal H_{l+p+1}$.
Thus, for every $l \in \mathbb N_0$ and $g \in \mathcal H_l$, $f \mapsto \langle g, V f \rangle_{\mathcal H}$
is a bounded linear functional on $\dom(V)$.
This proves that $\mathcal H_\infty \subset \text{dom}(V^*)$ and so we see that $V,V^* \colon \mathcal H_\infty \to \mathcal H_\infty$ satisfy \cref{asmp:main}(b).

For \cref{asmp:main}(a) we may directly compute
\begin{align*}
	(Vf)(\theta) & = \sum_{j=1}^d p_j(e^{\pm i \theta_1}, \ldots,e^{\pm i \theta_d})  \lim_{h_j \to 0} \frac{\langle k_{(\theta_1,\cdots, \theta_j+h_j,\cdots ,\theta_d)}, f \rangle_{\mathcal H} - \langle k_{(\theta_1,\cdots, \theta_j,\cdots ,\theta_d)}, f \rangle_{\mathcal H}}{h_j} \\
	             & = \left\langle \sum_{j=1}^d \overline{p_j(e^{\pm i \theta_1}, \ldots,e^{\pm i \theta_d})} \frac{\partial}{\partial \theta_j} k_\theta, f \right\rangle_{\mathcal H},
\end{align*}
provided the feature map $\theta \mapsto k_\theta$ is smooth and $\frac{\partial}{\partial \theta_j} k_\theta \in \mathcal H$.
Since the Mercer expansion of $k(\theta,\cdot)$ and its term-wise derivatives converge uniformly, then $\frac{\partial}{\partial \theta_j} k_\theta \in \mathcal H$ and
$$\left\lVert\left[ \theta' \mapsto \frac{\partial\ }{\partial \theta_j} k(\theta,\theta') \right]\right\rVert^2_\mathcal H = \left(\sum_{n=-\infty}^\infty e^{-2\tau |n|^p} \right)^{d-1} \left(\sum_{n=-\infty}^\infty n^2 e^{-2\tau |n|^p} \right) < \infty.$$
Since at every point $\theta$, $V$ is a finite linear combination of such terms by bounded functions we have
$$\sup_{\theta \in \mathbb T^d} \lVert \vec{V} \cdot \nabla_\theta k(\theta, \cdot)\rVert_{\mathcal H} = C<\infty,$$
and so
$$\lVert Vf \rVert_{\infty} = \sup_{\theta \in \mathbb T^d} \left\lvert \langle \vec{V}\cdot \nabla_\theta k(\theta,\cdot), f \rangle_{\mathcal H} \right\rvert \leq C \lVert f \rVert_{\mathcal H}$$
for some constant $C$.
Hence \cref{asmp:main}(a) is satisfied.

We next show that \cref{asmp:main}(c) is also satisfied for vector fields on $\mathbb T^d$ with trigonometric polynomial coefficients.

\begin{prop}\label{prop:example}
	The Fourier functions $e^{i m\cdot \theta}$ are jointly analytic vectors for the vector field $V$~\eqref{eq:v_poly} and its adjoint $V^*$ as unbounded operators on the RKHA $\mathcal H$ associated with any weight satisfying~\eqref{eq:subconv}.
\end{prop}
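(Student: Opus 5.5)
The plan is to compute $V$ and $V^*$ explicitly in the orthogonal Fourier basis $\phi_m(\theta) = e^{i m\cdot\theta}$, $m \in \mathbb Z^d$, of $\mathcal H$. In this basis $\langle \phi_m, \phi_n\rangle_{\mathcal H} = \omega^2(m)\delta_{mn}$, so $\lVert \phi_m\rVert_{\mathcal H} = \omega(m)$. Then bound the norm of an arbitrary word $V_{\epsilon(1)}\cdots V_{\epsilon(n)}\phi_m$ by a quantity of the form $B^n \prod_{k=0}^{n-1}(\lvert m\rvert + kp)$ times a constant depending on $m$. Such growth is exactly what is needed for a positive radius of convergence in \cref{asmp:main}(c).

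\textbf{Step 1 (action of $V$ and $V^*$).} Write $p_j = \sum_{a \in A} c_{j,a}\phi_a$ with $A \subset [-p,p]^d$ finite, where $p = \max_j \deg p_j$. Since $\partial_{\theta_j}\phi_m = i m_j \phi_m$, we get
\begin{equation*}
V\phi_m = \sum_{a\in A} \lambda_a(m)\,\phi_{m+a}, \qquad \lambda_a(m) = i\sum_{j=1}^d c_{j,a}m_j .
\end{equation*}
Pairing with $\phi_n$ shows $\langle \phi_m, V^*\phi_n\rangle_{\mathcal H} = \langle V\phi_m,\phi_n\rangle_{\mathcal H} = \overline{\lambda_{n-m}(m)}\,\omega^2(n)$. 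Hence
\begin{equation*}
V^*\phi_n = \sum_{a\in A}\overline{\lambda_a(n-a)}\,\frac{\omega^2(n)}{\omega^2(n-a)}\,\phi_{n-a}.
\end{equation*}
This is also a finite sum, which confirms that $\mathcal H_\infty$ is invariant under both $V$ and $V^*$. Note that $\lvert\lambda_a(m)\rvert \le c\,\lvert m\rvert$ with $c$ depending only on the coefficients $c_{j,a}$.

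\textbf{Step 2 (moderateness of the weight).} Both the weight ratios appearing in $V^*$ and the norm change $\lVert\phi_{m\pm a}\rVert_{\mathcal H}/\lVert\phi_m\rVert_{\mathcal H}$ must be controlled. From~\eqref{eq:subconv}, keep only one term of the convolution:
\begin{equation*}
\omega^{-2}(m)\,\omega^{-2}(a) \le (\omega^{-2}*\omega^{-2})(m+a) \le C\,\omega^{-2}(m+a).
\end{equation*}
This gives $\omega^2(m+a) \le C\,\omega^2(a)\,\omega^2(m)$ for all $m, a$. Since $A$ is finite, $\omega(n)/\omega(n-a)$ and $\omega(m+a)/\omega(m)$ are bounded by a uniform constant $K$ for $a \in \pm A$.

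\textbf{Step 3 (word estimate and summation).} Let $\lvert\cdot\rvert$ be the $\ell^1$ norm on $\mathbb Z^d$. Apply $V$ or $V^*$ to a single basis vector $\phi_k$ and use the triangle inequality. This produces at most $\lvert A\rvert$ terms, each with index shifted by at most $dp$ in $\ell^1$. Each term has coefficient at most $c(\lvert k\rvert + dp)K^2$ and norm at most $K\omega(k)$. Inducting on the word length, expand $V_{\epsilon(1)}\cdots V_{\epsilon(n)}\phi_m$ as a sum over paths of basis vectors. The indices at step $k$ satisfy $\lvert\cdot\rvert \le \lvert m\rvert + kdp$, which yields
\begin{equation*}
\lVert V_{\epsilon(1)}\cdots V_{\epsilon(n)}\phi_m\rVert_{\mathcal H} \le \omega(m)\,B^n\prod_{k=0}^{n-1}(\lvert m\rvert + (k+1)dp)
\end{equation*}
with $B$ independent of $n$, $m$ and $\epsilon$. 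Summing over the $2^n$ words and dividing by $n!$, write $M = \lceil \lvert m\rvert/(dp)\rceil$. The $n$-th term of the series in \cref{asmp:main}(c) is then at most
\begin{equation*}
\omega(m)\,(2Bdp\,\lvert z\rvert)^n\binom{n+M}{n},
\end{equation*}
and this series converges for $\lvert z\rvert < R := 1/(2Bdp)$. So $R_{\phi_m} \ge R > 0$; in fact $R$ is uniform in $m$, and joint analyticity extends to the span $\mathcal H_\infty$.

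\textbf{Main obstacle.} The delicate point is Steps 2–3. First, $V^*$ carries weight ratios that are not obviously bounded for a general subconvolutive $\omega$. Second, the linear growth of $\lvert\lambda_a\rvert$ along the path could compound super-factorially. The single-term subconvolution bound settles the former, and the additive (not multiplicative) index drift settles the latter. If these bookkeeping estimates fail, I would fall back on working with the bandlimited subspaces $\mathcal H_l$ from~\eqref{eq:h_bandlimited} and operator norms of $V$, $V^*$ restricted to $\mathcal H_l \to \mathcal H_{l+p}$.
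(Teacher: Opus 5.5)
Your proposal is correct and is essentially the paper's argument written out in the Fourier basis. The paper bounds $V, V^*\colon \mathcal H_l \to \mathcal H_{l+p}$ in operator norm by $dM(l+p)$, using boundedness of the multiplication operators $M_{p_j}$, $M_{p_j}^*$ (from the RKHA structure) and $\lVert \partial_{\theta_j}\rvert_{\mathcal H_l}\rVert = l$, then sums $\frac{\lvert z\rvert^n}{n!}2^n\prod_{j=1}^n dM(l+jp)$ to get $R_f = (2dMp)^{-1}$; this is precisely your fallback, and your path expansion with the moderateness bound $\omega^2(m+a)\le C\,\omega^2(m)\,\omega^2(a)$ (from one term of the subconvolution sum) just replaces the abstract boundedness of $M_{p_j}$, giving the same additive index drift and factorial-times-geometric growth with a cruder constant but likewise a radius uniform in $m$.
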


\begin{proof}
	Observe that the multiplication operators $M_{p_j}$ and their adjoints are graded maps
	\begin{equation*}
		M_{p_j}, M_{p_j}^* \colon \mathcal H_l \to \mathcal H_{l+d},
	\end{equation*}
	satisfying the operator norm bounds
	\begin{equation*}
		\lVert M_{p_j} \rvert_{\mathcal H_l} \rVert, \; \lVert M^*_{p_j} \rvert_{\mathcal H_l} \rVert \leq M := \max_j \lVert M_{p_j} \rVert,
	\end{equation*}
	uniformly in $l \in \mathbb N$.
	Moreover, $\frac{\partial\ }{\partial\theta_j} \colon \mathcal H_l \to \mathcal H_l$ has operator norm $l$.
	Together, these facts imply that $V, V^* \colon \mathcal H_l \to \mathcal H_{l+p}$ are also graded and satisfy
	$$\lVert V|_{\mathcal H_l} \rVert, \; \lVert V^*|_{\mathcal H_l} \rVert \leq d M (l+p).$$
	Gradedness also implies that $V^n$ and $(V^*)^n$ are well-defined linear maps from $\mathcal H_\infty$ to $\mathcal H$ for any $n \in \mathbb N$.
	Turning to the joint analyticity condition, fix $f \in \mathcal H_l$ and observe that the series
	\begin{align*}
		\sum_{n=0}^\infty \frac{|z|^n}{n!} \sum_{\epsilon \colon [n] \to [2]} \left\lVert V_{\epsilon(1)} \cdots V_{\epsilon(n)} f \right\rVert_{\mathcal H}
		 & \leq \sum_{n=0}^\infty \frac{|z|^n}{n!} 2^n \sup_{\epsilon \colon [n] \to [2]} \left\lVert V_{\epsilon(1)} \cdots V_{\epsilon(n)} f \right\rVert_{\mathcal H} \\
		 & \leq \sum_{n=0}^\infty \frac{|z|^n}{n!} 2^n \prod_{j=1}^n (dM(l+jp)) \lVert f \rVert_{\mathcal H}                                                             \\
		 & = \sum_{n=0}^\infty \frac{|z|^n}{n!}n! (2dMp)^n \prod_{j=1}^n \left( 1+ \frac{l}{jp}\right) \lVert f \rVert_{\mathcal H}                                      \\
		 & \leq \sum_{n=0}^\infty (2dMp|z|)^n \prod_{j=1}^n \exp \left(\frac{l}{pj}\right) \lVert f \rVert_{\mathcal H}                                                  \\
		 & = \sum_{n=0}^\infty (2dMp|z|)^n \exp \left(\frac{l}{p} \sum_{j=1}^n \frac{1}{j}\right) \lVert f \rVert_{\mathcal H}                                           \\
		 & \leq \sum_{n=0}^\infty (2dMp|z|)^n \exp \left(\frac{l}{p}(\ln(n)+1)\right) \lVert f \rVert_{\mathcal H}                                                       \\
		 & = \lVert f \rVert_{\mathcal H} e^{l/p} \sum_{n=0}^\infty n^{l/p}(2dMp|z|)^n                                                                                   \\
	\end{align*}
	has a radius of convergence $R_f = (2dMp)^{-1}$.
\end{proof}

The previous results provide a class of dynamical systems and RKHAs that satisfy \cref{asmp:main}.

\begin{rk}
	In particular cases, the Koopman operator $U_t$ may be defined for all $t \in \mathbb R$.
	For an example, consider the vector field $\vec V = \sin(\theta) \frac{d }{d\theta}$ on the unit circle.
	The evolution of $\phi_1(\theta) = e^{i\theta}$ under the corresponding flow, $\Phi^t\colon \mathbb T \to \mathbb T$, can be found analytically from the equivalent complex differential equation $\frac{dz}{dt} = \frac{1}{2}(z^2-1)$, $z(t) = e^{i\theta(t)}$.
	The solution is given by
	$$(U_t \phi_1)(\theta) =-\tanh(t/2)+ \sech^2(t/2)\sum_{n=1}^\infty \tanh^{n-1}(t/2) e^{in\theta}.$$
	One can directly verify this solution and show that $U_t\phi_1 \in \mathcal H$ for all $t \in \mathbb R$ where $\mathcal H$ is the RKHA built with the subexponential weights in \eqref{eq:subexp_weights}.
	Since $U_t$ is a composition operator and $\mathcal H$ is a Banach algebra, we have $U_t(fg) = (U_t f) (U_t g)$ for all $f, g \in \dom(U_t)$ such that $fg \in \dom(U_t)$.
	The latter, together with the fact that $\overline{U_t(f)} = U_t(\overline{f})$, implies that $\alg\{1_X, \phi_1, \phi_{-1}\} \subseteq \dom(U_t)$ for all $ t\in \mathbb R$.
	Thus, $U_t$ is densely defined for all $t \in \mathbb R$ in $\mathcal H$.

	The distinction between analytic vectors for $V$ and the domain of $U_t$ already appears in this example.
	By extracting the highest-order Fourier mode we get the bound
	$$\lVert V^n \phi_1 \rVert_\mathcal H = \left\lVert \frac{i^n}{(2i)^n}n! \phi_1^{n+1} + \cdots \right\rVert_\mathcal H \geq \frac{n!}{2^n} \omega(n+1),$$
	and so the sum $\sum_{n=0}^\infty \frac{t^n}{n!} \lVert V^n \phi_1 \rVert_\mathcal H$ diverges for $|t| \geq 2$.
	This means $e^{Vt}\phi_1$ is ill-defined for $|t|\geq 2$ even though $U_t \phi_1$ lies in $\mathcal H$ for all $t \in \mathbb R$.
\end{rk}

\subsection{Existence of locally compact examples} Examples in the locally compact case $X=\mathbb R^d$ with vector fields $V=\sum_{j=1}^d p_j(x_1,\cdots,x_d) \frac{\partial}{\partial x_j}$ with general polynomial coefficients pose significant challenges due to the possibility of finite-time blowup (e.g., $V= x^2 \frac{d}{dx}$ on $\mathbb R$).
However, there is a less satisfying source of locally compact examples.

Let $Y$ be a smooth, compact manifold, $\vec W \colon Y \to TY$ a smooth vector field, $\mathcal H \subset C^\infty(Y)$ an RKHA, $\mathcal H_\infty \subset \mathcal H$ a dense set of jointly analytic vectors, and $\mathcal S \subset \mathcal H_\infty$ a finite dimensional space satisfying \cref{asmp:main,asmp:alg}.
If $\vec W$ has a fixed point, $\vec W_p = 0$, then we may remove the point $p$ to obtain an example on a locally compact space $X= Y\backslash \{p\}$ with $\vec V =\vec W|_X$ and the original RKHA viewed as a space of functions on $X$, $\mathcal H \subset C^\infty(X)$.

\section{Fibonacci amplification}
\label{sec:first_amplification}

Given \cref{asmp:main}, the operators
\begin{equation}
	\label{eq:a_s_ops}
	A = \frac{V-V^*}{2}, \quad S=\frac{V+V^*}{2},
\end{equation}
are antisymmetric (resp.\ symmetric) on $\dom(A) = \dom(S) = \mathcal H_\infty$, and their domain is a dense subspace of analytic vectors in $\mathcal H$.
We dilate $A$ to essentially skew-adjoint operators $L$ and $L^+$ from which the action of $V$ and $V^*$, respectively, can be recovered.

To accomplish this, consider the amplification $\mathfrak H = l^2(\mathbb N) \otimes \mathcal H$.
Let $e_1, e_2, \ldots$ be the canonical orthonormal basis vectors of $\ell^2(\mathbb N)$, $e_i(j) = \delta_{ij}$, and $e_{i,j} \in B(\ell^2(\mathbb N))$ the corresponding matrix units, $e_{i,j} e_j = e_i$.
Define $L, L^+ \colon \mathfrak H_0 \to \mathfrak H $ on the dense domain $\mathfrak H_0 = C_c(\mathbb N) \otimes_\text{alg} \mathcal H_\infty \subset \mathfrak H$ as
\begin{equation}
	\label{eq:l_ops}
	\begin{aligned}
		L   & = \sum_{i=1}^\infty e_{i,i+1} \otimes S + e_{i,i} \otimes A - e_{i+1,i} \otimes S,   \\
		L^+ & = \sum_{i=1}^\infty - e_{i,i+1} \otimes S + e_{i,i} \otimes A + e_{i+1,i} \otimes S.
	\end{aligned}
\end{equation}
Let also $F_n$ denote the $n^{th}$ Fibonacci number (i.e., $F_0=0$, $F_1=1$, and $F_n=F_{n-1}+F_{n-2}$), and define $E_n \colon \mathcal H \to \mathfrak H$, $n \in \mathbb N$, as
\begin{equation*}
	E_nf = \sum_{i=1}^n F_i e_{i} \otimes f.
\end{equation*}
Define additionally $p_1\colon \ell^2(\mathbb N) \to \mathbb C$ as the orthogonal projection onto the first component.

Observe that $L$ and $L^+$ are represented by skew-symmetric tridiagonal matrices with entries $A$ and $\pm S$,
\begin{equation}
	\label{eq:l_mats}
	L =\left[
		\begin{array}{ccccc}
			A      & S      & 0      & 0      & \cdots \\
			-S     & A      & S      & 0      & \cdots \\
			0      & -S     & A      & S      & \cdots \\
			0      & 0      & -S     & A      & \cdots \\
			\vdots & \vdots & \vdots & \vdots & \ddots \\
		\end{array}\right],
	\quad
	L^+ =\left[
		\begin{array}{ccccc}
			-A     & S      & 0      & 0      & \cdots \\
			-S     & -A     & S      & 0      & \cdots \\
			0      & -S     & -A     & S      & \cdots \\
			0      & 0      & -S     & -A     & \cdots \\
			\vdots & \vdots & \vdots & \vdots & \ddots \\
		\end{array}\right].
\end{equation}
It is evident from this representation that $L$ and $L^+$ are antisymmetric operators that have $\mathfrak H_0$ as an invariant subspace.
Observe also that
\begin{equation*}
	(p_1 \otimes \Id) L E_n f =(S+A)f = Vf, \quad (p_1 \otimes \Id) L^+ E_n f = (S-A)f = V^*f,
\end{equation*}
for all $n >1$ and $f \in \mathcal H_\infty$.
Moreover, $E_1 \colon \mathcal H \hookrightarrow \mathfrak H$ is an isometric embedding and
\begin{equation*}
	(p_1 \otimes \Id) L  E_1  = A, \quad (p_1 \otimes \Id) L^+ E_1  = -A.
\end{equation*}
Thus, $L$ and $L^+$ are dilations of $A$ and $-A$, respectively.

\begin{lem}\label{lem:radiusA}
	Let $f \in \mathcal H_\infty$ be jointly analytic with respect to $V$ and $V^*$ with radius of convergence for the series
	$$ \sum_{n=0}^\infty \frac{z^n}{n!} \sum_{\epsilon \colon [n] \to [2]} \left\lVert V_{\epsilon(1)} \cdots V_{\epsilon(n)} f \right\rVert_{\mathcal H}, \quad z \in \mathbb C,$$
	$R_f > 0$.
	Then $f$ is also jointly analytic with respect to $A$ and $S$ with a radius of convergence greater than or equal to $R_f$.
\end{lem}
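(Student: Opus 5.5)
The plan is to expand each word in $A$ and $S$ into words in $V$ and $V^*$ and compare the two series term by term. Set $W_1 = A$ and $W_2 = S$, so $W_1 = (V_1 - V_2)/2$ and $W_2 = (V_1 + V_2)/2$ on $\mathcal H_\infty$. By \cref{asmp:main}(b) the subspace $\mathcal H_\infty$ is invariant under $V_1$ and $V_2$, so every word $W_{\delta(1)} \cdots W_{\delta(n)} f$ with $\delta\colon [n] \to [2]$ and $f \in \mathcal H_\infty$ is well defined.

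Multilinearity lets us expand each factor. This gives
\begin{equation*}
	W_{\delta(1)} \cdots W_{\delta(n)} f = \frac{1}{2^n} \sum_{\epsilon \colon [n] \to [2]} \sigma_\delta(\epsilon)\, V_{\epsilon(1)} \cdots V_{\epsilon(n)} f,
\end{equation*}
where each sign $\sigma_\delta(\epsilon) \in \{\pm 1\}$ is a product of the signs $\pm 1$ picked up from the factors $W_{\delta(j)}$. The triangle inequality then gives, uniformly in $\delta$,
\begin{equation*}
	\lVert W_{\delta(1)} \cdots W_{\delta(n)} f \rVert_{\mathcal H} \leq \frac{1}{2^n} \sum_{\epsilon} \lVert V_{\epsilon(1)} \cdots V_{\epsilon(n)} f \rVert_{\mathcal H}.
\end{equation*}

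There are exactly $2^n$ maps $\delta\colon [n] \to [2]$. Summing over them cancels the factor $2^{-n}$:
\begin{equation*}
	\sum_{\delta} \lVert W_{\delta(1)} \cdots W_{\delta(n)} f \rVert_{\mathcal H} \leq \sum_{\epsilon} \lVert V_{\epsilon(1)} \cdots V_{\epsilon(n)} f \rVert_{\mathcal H}.
\end{equation*}
We multiply by $\lvert z\rvert^n/n!$ and sum over $n$. The joint $(A,S)$ series is then dominated term by term by the joint $(V,V^*)$ series. The latter converges for $\lvert z\rvert < R_f$, so by comparison the former converges on the same disk. Its radius of convergence is therefore at least $R_f$.

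I expect no real obstacle; the only point needing care is the bookkeeping. The factor $2^{-n}$ from the halves exactly offsets the $2^n$ choices of $\delta$, so the bound holds with constant $1$ rather than $2^n$. This is what keeps the radius at least $R_f$ instead of shrinking it to $R_f/2$.
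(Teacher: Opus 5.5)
Your proposal is correct and follows essentially the same route as the paper. You expand each word in $A=(V-V^*)/2$ and $S=(V+V^*)/2$ into $2^n$ signed words in $V$ and $V^*$ and apply the triangle inequality; summing over the $2^n$ choices of $(A,S)$-word then cancels the factor $2^{-n}$, so the $(A,S)$ series is dominated term by term by the $(V,V^*)$ series.
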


\begin{proof}
	Let $A_1=A$ and $A_2=S$ and consider
	\begin{align*}
		\sum_{n=0}^\infty \frac{z^n}{n!} \sum_{\epsilon \colon [n] \to [2]} \left\lVert A_{\epsilon(1)} \cdots A_{\epsilon(n)} f \right\rVert_{\mathcal H} & =\sum_{n=0}^\infty \frac{z^n}{n!} \sum_{\epsilon \colon [n] \to [2]} \left\lVert \left(\frac{V\pm_{\epsilon(1)} V^*}{2}\right) \cdots \left(\frac{V\pm_{\epsilon(n)} V^*}{2}\right) f \right\rVert_{\mathcal H} \\
		                                                                                                                                                   & \leq \sum_{n=0}^\infty \frac{z^n}{n!} \frac{1}{2^n}\sum_{\epsilon \colon [n] \to [2]} \sum_{\delta \colon [n] \to [2]} \left\lVert V_{\delta(1)} \cdots V_{\delta(n)} f \right\rVert_{\mathcal H}               \\
		                                                                                                                                                   & = \sum_{n=0}^\infty \frac{z^n}{n!} \sum_{\epsilon \colon [n] \to [2]} \left\lVert V_{\epsilon(1)} \cdots V_{\epsilon(n)} f \right\rVert_{\mathcal H},
	\end{align*}
	which converges for $|z| < R_f$.
\end{proof}

Next, we will need the following results on analytic vectors, originally due to Nelson \cite{Nelson59}.

\begin{thm}[\cite{Nelson59}*{Lemma~5.1}, \cite{ReedSimon75}*{\S X.2, Theorem~X.39}] The following hold for a densely defined symmetric (resp.\ skew-symmetric) operator on a Hilbert space.
	\label{thm:nelson}
	\begin{enumerate}[(a)]
		\item If $\dom(T)$ contains a dense set of analytic vectors, then $T$ is essentially self-adjoint (resp.\ skew-adjoint).
		\item If $T$ is closed, then it is self-adjoint (resp.\ skew-adjoint) if and only if $\dom(T)$ has a dense set of analytic vectors.
		      The set of analytic vectors also forms a core for $T$ (i.e., the closure of $T$ restricted to its analytic vectors is $T$).
	\end{enumerate}
\end{thm}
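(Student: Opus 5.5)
This is Nelson's analytic vector theorem, which the paper cites from the literature. I would follow the standard route via the Cayley transform and von Neumann's deficiency-index criterion. It suffices to treat the symmetric case, since $T$ is skew-symmetric if and only if $iT$ is symmetric, and analytic vectors of $T$ and $iT$ coincide.

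\emph{Part (a).} Let $T$ be densely defined and symmetric, and suppose $\dom(T)$ contains a dense set $\mathcal A$ of analytic vectors. By von Neumann's criterion, it suffices to show $\ker(T^* \mp i) = \{0\}$. Take $\psi$ with $T^*\psi = i\psi$ and fix $\phi \in \mathcal A$ with radius of analyticity $r > 0$. Consider the function $g(z) = \langle \psi, e^{zT}\phi \rangle$, where $e^{zT}\phi := \sum_n \frac{z^n}{n!}T^n\phi$ for $\lvert z\rvert < r$.

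\begin{enumerate}[(i)]
\item Because $T^n\phi \in \dom(T)$ and $T^*\psi = i\psi$, we get $\langle \psi, T^n\phi\rangle = \langle (T^*)^n\psi, \phi\rangle = (-i)^n\langle\psi,\phi\rangle$. Hence $g(z) = e^{-iz}\langle\psi,\phi\rangle$ near $0$.
\item For real $t$ with $\lvert t \rvert < r/2$, the vector $\phi_t := e^{itT}\phi$ is again analytic with radius at least $r/2$, since $\lVert T^n\phi_t\rVert$ is controlled by the tail sums of $\lVert T^m\phi\rVert$. Symmetry of $T$ gives $\lVert e^{itT}\phi\rVert = \lVert \phi \rVert$: differentiating $\lVert \phi_t\rVert^2$ gives $\langle iT\phi_t,\phi_t\rangle + \langle\phi_t, iT\phi_t\rangle = 0$.
\item Applying (i) to $\phi_t$, and using $e^{isT}\phi_t = \phi_{s+t}$, the function $t \mapsto \langle \psi, \phi_t\rangle$ extends to all $t \in \mathbb R$ with values $e^{t}\langle \psi,\phi\rangle$. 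At the same time it is bounded by $\lVert\psi\rVert\lVert\phi\rVert$ by (ii). Therefore $\langle\psi,\phi\rangle = 0$.
\end{enumerate}

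Density of $\mathcal A$ then forces $\psi = 0$. The case $T^*\psi = -i\psi$ is identical, giving $e^{-t}$ instead.

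\emph{Main obstacle.} The hard step is (iii): making the continuation rigorous, i.e.\ ensuring that the analytic radii of $\phi_t$ do not shrink to zero when the local flows are iterated. I would handle this by showing that for $\lvert t\rvert < r/2$ the radius of $\phi_t$ is at least $r/2$. Stepping in increments of $r/4$, and using only the local identity $e^{isT}e^{itT}\phi = e^{i(s+t)T}\phi$ (a Cauchy product of absolutely convergent series), would then suffice. To avoid iterating altogether, one can instead apply the Phragmén–Lindelöf/identity argument to $\langle\psi,\phi_t\rangle = e^t\langle\psi,\phi\rangle$ together with the uniform bound. As a simpler alternative, restrict to vectors of infinite radius; these are still dense when $T$ is self-adjoint, but not a priori in general, so the stepping argument is needed.

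\emph{Part (b).} Let $T$ be closed. If $T$ is self-adjoint, the spectral theorem shows that the vectors $E([-N,N])h$ are entire analytic vectors, and they are dense. Conversely, part (a) together with closedness gives $T = \overline{T} = T^*$. For the core statement, note that the restriction $T_{\mathcal A}$ of $T$ to its analytic vectors is still symmetric with a dense set of analytic vectors. By (a) it is essentially self-adjoint, so $\overline{T_{\mathcal A}}$ is a self-adjoint operator contained in the self-adjoint operator $T$. Since self-adjoint operators are maximally symmetric, $\overline{T_{\mathcal A}} = T$.
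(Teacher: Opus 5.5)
The paper gives no proof of this statement; it cites Nelson and Reed--Simon. Your proposal therefore has to stand on its own.

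\textbf{What works.} The overall strategy, steps (i) and (ii), the reduction to the symmetric case, and part (b) are fine, with two small repairs:
\begin{itemize}
\item $\phi_t$ is only known to lie in $\dom(\overline T)$, so you should work with $\overline T$ throughout; it has the same adjoint.
\item For the core statement you need that $T$ maps analytic vectors to analytic vectors, so that they remain analytic for $T_{\mathcal A}$.
\end{itemize}

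\textbf{The gap.} It is exactly the step you flag, and neither of your proposed resolutions closes it.
\begin{itemize}
\item The tail-sum estimate $\lVert\overline T^{n}\phi_t\rVert\le\sum_m\frac{|t|^m}{m!}\lVert T^{m+n}\phi\rVert$ only gives radius $\ge r-|t|$. Iterated over steps $s_1,\dots,s_k$, it guarantees radius $r-\sum_j|s_j|$. So stepping never carries you beyond $|t|<r$, where $\phi_t$ was already given by a single series.
\item Your bound ``radius $\ge r/2$ for $|t|<r/2$'' is a one-step statement relative to $\phi$. Reapplied at $\phi_{r/4}$, it degrades to $r/4$, then $r/8$, and so on.
\item The Phragm\'en--Lindel\"of route also fails. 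The function $z\mapsto\langle\psi,e^{zT}\phi\rangle$ is only defined on $|z|<r$. There the bound $e^{t}\lvert\langle\psi,\phi\rangle\rvert\le\lVert\psi\rVert\lVert\phi\rVert$ yields only $\lvert\langle\psi,\phi\rangle\rvert\le e^{-r}\lVert\psi\rVert\lVert\phi\rVert$, not $0$.
\end{itemize}

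\textbf{The missing idea.} The isometry in (ii) applies not just to $\phi$ but to every $T^n\phi$, which is itself analytic with radius $\ge r$.
\begin{itemize}
\item Since $\overline T^{n}\phi_t=\sum_m\frac{(it)^m}{m!}T^{m+n}\phi=e^{itT}(T^n\phi)$, you get $\lVert\overline T^{n}\phi_t\rVert=\lVert T^n\phi\rVert$ for all $n$ and $|t|<r$.
\item Hence $\phi_t$ has radius $\ge r$, with no loss at all. This property is inherited at every stage, so you can step by $r/2$ indefinitely. Consistency on overlaps follows from the Cauchy-product identity.
\item Applying (i) at each base point gives $\langle\psi,\phi_t\rangle=e^{t}\langle\psi,\phi\rangle$ for all $t\in\mathbb R$, while $\lVert\phi_t\rVert=\lVert\phi\rVert$. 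This forces $\langle\psi,\phi\rangle=0$.
\end{itemize}

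\textbf{An alternative fix.} Take a self-adjoint extension $\tilde T$ of $\overline T\oplus(-\overline T)$ on $\mathbb H\oplus\mathbb H$; it exists because the deficiency indices are equal. For $|t-t_0|<r$,
\[
e^{it\tilde T}(\phi,0)=e^{it_0\tilde T}\sum_n\frac{(i(t-t_0))^n}{n!}(T^n\phi,0).
\]
So $t\mapsto\langle(\psi,0),e^{it\tilde T}(\phi,0)\rangle$ is real-analytic on all of $\mathbb R$ with uniform radius $r$, and it is bounded. It equals $e^{t}\langle\psi,\phi\rangle$ near $0$, so the identity theorem finishes the argument.
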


By \cref{asmp:main,thm:nelson}, $A$ and $S$ from~\eqref{eq:a_s_ops} are essentially skew-adjoint (resp.\ self-adjoint) operators on the RKHS $\mathcal H$.
In addition, we have:

\begin{thm}
	\label{thm:amplification}
	Under \cref{asmp:main}, the unbounded operators $L$ and $L^+$ from~\eqref{eq:l_ops} have the following properties:
	\begin{enumerate}[(a)]
		\item $L$ and $L^+$ are essentially skew-adjoint on $\mathfrak H_0$.
		\item For every $n\in \mathbb N$, $k < n$, and $f \in \mathcal H_\infty$,
		      \begin{equation*}
			      (p_1 \otimes \Id) L^k E_nf = V^kf, \quad (p_1 \otimes \Id) (L^+)^k E_nf = (V^*)^kf.
		      \end{equation*}
	\end{enumerate}
\end{thm}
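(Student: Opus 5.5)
Part (a) will follow from \cref{thm:nelson}(a), and part (b) from an induction driven by the Fibonacci recursion. Throughout, antisymmetry of $L$ and $L^+$ on $\mathfrak H_0$ and invariance of $\mathfrak H_0$ are taken as given from the matrix representation~\eqref{eq:l_mats}.

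For (a), write $L = T \otimes S + \Id \otimes A$ with $T = \sum_i (e_{i,i+1} - e_{i+1,i})$. The operator $T$ is bounded on $\ell^2(\mathbb N)$ with $\lVert T \rVert \le 2$. Similarly $L^+ = T \otimes S - \Id \otimes A$.

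Fix $f \in \mathcal H_\infty$ and $i \in \mathbb N$. Expanding $L^m (e_i \otimes f)$ gives a sum over words $\epsilon\colon [m] \to [2]$ of terms $B_{\epsilon(1)} \cdots B_{\epsilon(m)} e_i \otimes A_{\epsilon(1)} \cdots A_{\epsilon(m)} f$. Here $A_1 = A$, $A_2 = S$, $B_1 = \Id$ and $B_2 = T$. Hence
\begin{equation*}
\lVert L^m(e_i\otimes f)\rVert_{\mathfrak H} \le 2^m \sum_{\epsilon\colon[m]\to[2]} \lVert A_{\epsilon(1)}\cdots A_{\epsilon(m)} f\rVert_{\mathcal H}.
\end{equation*}
By \cref{lem:radiusA}, $\sum_m \frac{|z|^m}{m!}\lVert L^m(e_i\otimes f)\rVert$ therefore converges for $|z| < R_f/2$, so each $e_i \otimes f$ is an analytic vector for $L$, and likewise for $L^+$. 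Finite linear combinations of analytic vectors are analytic, and these vectors span $\mathfrak H_0$. Thus the dense domain $\mathfrak H_0$ consists of analytic vectors, and \cref{thm:nelson}(a) gives essential skew-adjointness.

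For (b), the key computation is one application of $L$ to $E_n f$. For $1 \le i \le n-1$, the $i$-th component is
\begin{equation*}
F_i A f + F_{i+1} S f - F_{i-1} S f = F_i (A+S) f = F_i V f,
\end{equation*}
using $F_{i+1} - F_{i-1} = F_i$ and $F_0 = 0$. Only components $n$ and $n+1$ deviate from this pattern. Hence
\begin{equation*}
L E_n f = E_{n-1} V f + r,
\end{equation*}
where $r \in \mathfrak H_0$ is supported in positions $\ge n$. For $L^+$, row $i$ reads $-A g_i + S g_{i+1} - S g_{i-1}$, which gives $F_i(S-A)f = F_i V^* f$ in the same way.

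I then induct on $k$ with the hypothesis
\begin{equation*}
L^k E_n f = E_{n-k} V^k f + r_k, \qquad \supp r_k \subseteq \{n-k+1, n-k+2, \ldots\}.
\end{equation*}
Each application of the tridiagonal $L$ enlarges the support of $r_k$ downward by at most one position. Applying the one-step identity to $E_{n-k}V^k f$ (valid since $V^k f \in \mathcal H_\infty$) produces $E_{n-k-1}V^{k+1} f$ plus a remainder supported in positions $\ge n-k$. This closes the induction.

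For $k < n$ the remainder lives in positions $\ge 2$. Projecting with $p_1 \otimes \Id$ therefore leaves $F_1 V^k f = V^k f$, and the identical argument gives $(V^*)^k f$ for $L^+$. The only delicate point is the bookkeeping of the remainder supports and the boundary index, making sure that $k<n$ keeps the remainder off the first component; the analytic part (a) is routine given \cref{lem:radiusA}.
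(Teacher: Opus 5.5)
Your proof is correct and follows the paper's overall plan: Nelson's analytic-vector theorem on $\mathfrak H_0$ for (a), and the identity $F_{i+1}-F_{i-1}=F_i$ plus the banded structure of $L$ for (b). The differences are in execution.

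In (a), the paper counts the terms in each entry of $L^n(e_l\otimes f)$ using the adjacency matrix $\Gamma$ of the path graph with loops. It bounds them entrywise by $\lvert\Gamma^n(k,l)\rvert\le\lVert\Gamma\rVert^n=3^n$ and sums over the $l+n$ reachable positions. This gives the bound $(l+n)3^n\sum_\epsilon\lVert A_{\epsilon(1)}\cdots A_{\epsilon(n)}f\rVert_{\mathcal H}$ and a radius of analyticity at least $R_f/3$. Your factorization $L=T\otimes S+\Id\otimes A$, with $\lVert T\rVert\le 2$ and the cross-norm, replaces this combinatorics by a single operator-norm estimate. It removes the polynomial factor, is uniform in $i$, and gives the slightly sharper radius $R_f/2$.

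In (b), the paper applies $L$ to the formal infinite vector $Ef=\sum_i F_ie_i\otimes f$, shows $LEf=EVf$ formally, and appeals to bandedness to pass to the truncation $E_n$. Your induction $L^kE_nf=E_{n-k}V^kf+r_k$ with $\supp r_k\subseteq\{n-k+1,\ldots\}$ is the same computation carried out entirely inside $\mathfrak H_0$. It makes the truncation step precise where the paper leaves it at the level of formal vectors.

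You also (correctly) used the sign convention of \eqref{eq:l_mats} for $L^+$. The formula in \eqref{eq:l_ops} differs from it by an overall sign, and with that version the first component would be $(-V^*)^kf$ rather than $(V^*)^kf$.
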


\begin{proof}
	By symmetry, it suffices to prove this for $L$.
	For part (a) we will exhibit a dense family of analytic vectors implying, by \cref{thm:nelson}(a), essential skew adjointness of $L$.

	We need to analyze the combinatorial growth of terms due to the connectivity of $L$ as an infinite matrix.
	Let $A_1 = A$ and $A_2 = S$.
	Precisely, the number of terms of the form $A_{\epsilon(1)} \cdots A_{\epsilon(n)}f$, in the $k^{th}$ entry of $L^n(e_l \otimes f)$ is counted by the adjacency matrix for the graph,
	$$\Gamma =
		\begin{tikzpicture}
			\clip (-.2,-.1) rectangle (6.5,.7);
			\draw[black, very thick] (0,0)--(5,0);
			\draw[black, fill=black] (0,0) circle (.6mm);
			\draw[black, fill=black] (1,0) circle (.6mm);
			\draw[black, fill=black] (2,0) circle (.6mm);
			\draw[black, fill=black] (3,0) circle (.6mm);
			\draw[black, fill=black] (4,0) circle (.6mm);
			\draw[black, fill=black] (5.5,0) circle (.2mm);
			\draw[black, fill=black] (5.75,0) circle (.2mm);
			\draw[black, fill=black] (6,0) circle (.2mm);

			\draw[black, very thick] (0,0) to[out=120,in=180] (0,.5) to[out=0, in=70] (0,0);
			\draw[black, very thick] (1,0) to[out=120,in=180] (1,.5) to[out=0, in=70] (1,0);
			\draw[black, very thick] (2,0) to[out=120,in=180] (2,.5) to[out=0, in=70] (2,0);
			\draw[black, very thick] (3,0) to[out=120,in=180] (3,.5) to[out=0, in=70] (3,0);
			\draw[black, very thick] (4,0) to[out=120,in=180] (4,.5) to[out=0, in=70] (4,0);
		\end{tikzpicture}$$
	Since the operator norm of this adjacency matrix is given by $\lVert \Gamma \rVert = 3$ (use functional calculus and the fact $\lVert \Gamma -1 \rVert =2$), we have $\lvert \Gamma^n(k,l) \rvert \leq \lVert \Gamma \rVert^n = 3^n$.
	Then the vectors $e_l \otimes f \in l^2(\mathbb N) \otimes \mathcal H_\infty$ are analytic for $L$ with a radius of convergence greater than or equal to $R_f/3$ when the radius of convergence for $f$ is $R_f$ as an analytic vector of $V$ and $V^*$ (see \cref{lem:radiusA}):
	\begin{align*}
		\sum_{n=0}^\infty \frac{|z|^n}{n!} \lVert L^n (e_l \otimes f) \rVert_{\mathcal H}
		 & \leq \sum_{n=0}^\infty \frac{|z|^n}{n!} \left(\sum_{k=1}^{l+n} |\Gamma^n(k,l)| \right) \max_{\epsilon\colon [n] \to [2]} \left\lVert A_{\epsilon(1)} \cdots A_{\epsilon(n)} f \right\rVert_{\mathcal H} \\
		 & \leq \sum_{n=0}^\infty \frac{|z|^n}{n!} (l+n) 3^n \sum_{\epsilon \colon [n] \to [2]} \left\lVert A_{\epsilon(1)} \cdots A_{\epsilon(n)} f \right\rVert_{\mathcal H}                                     \\
		 & \leq \sum_{n=0}^\infty \frac{(3|z|)^n}{n!} (l+n) \sum_{\epsilon \colon [n] \to [2]} \left\lVert V_{\epsilon(1)} \cdots V_{\epsilon(n)} f \right\rVert_{\mathcal H}.
	\end{align*}
	Therefore, $\mathfrak H_0 = C_c(\mathbb N) \otimes_\text{alg}  \mathcal H_\infty$ is a dense family of analytic vectors for $L$.

	For part (b) consider the formal vector $E f = \sum_{i=1}^\infty F_i e_{i} \otimes f \in \mathcal H^{\mathbb N}_\infty$ and its behavior under $L$.
	We claim that
	\begin{equation*}
		E V f = L E f
	\end{equation*}
	as formal vectors and matrices.
	Indeed, we have
	\begin{align*}
		L E f
		 & = \sum_{i=1}^\infty \left( F_{i+1} e_i \otimes Sf + F_i e_i \otimes Af - F_i e_{i+1} \otimes Sf \right)                              \\
		 & =F_2 e_1 \otimes Sf + F_1 e_1 \otimes Af  + \sum_{i=2}^\infty   F_{i+1} e_i \otimes Sf + F_i e_i \otimes Af - F_{i-1} e_i \otimes Sf \\
		 & = e_1 \otimes (A+S)f +\sum_{i=2}^\infty\left( (F_{i+1} -F_{i-1}) e_i \otimes Sf + F_i e_i \otimes Af \right)                         \\
		 & = F_1e_1 \otimes Vf +\sum_{i=2}^\infty  \left(F_i e_i \otimes Sf + F_i e_i \otimes Af \right)                                        \\
		 & = EVf.
	\end{align*}
	Since the infinite matrix $L$ is banded, when the map $E$ is truncated, we have $(p_1 \otimes \Id) L^k E_n f = (p_1 \otimes \Id) L^k E f= V^k f$ as formal vectors provided $k < n$.
\end{proof}

By \cref{thm:amplification} and Stone's theorem on 1-parameter unitary evolution groups \cite{Stone32}, we deduce that the closures $\overline{L} = -L^*$ and $\overline{L^+} = -(L^+)^* $ are skew-adjoint operators that generate strongly continuous groups of unitary operators, $e^{t \overline{L}}$ and $e^{-t \overline{L}}$, respectively, built using the Borel functional calculus.
Moreover, restricted to analytic vectors $u \in \mathfrak{H}_0$ these unitaries satisfy $e^{t\overline{L}} u = e^{tL} u$ and $e^{t \overline{L^+}}u = e^{t L^+} u$, where the right-hand sides are given by Taylor series.
We now compare $e^{tL}$ (resp.\ $e^{tL^+}$) applied to $E_n f$ and the vector $\sum_{k=0}^n \frac{t^k}{k!} V^k f$ (resp.\ $\sum_{k=0}^n \frac{t^k}{k!} (V^*)^k f$).
This bound is nearly identical to the one used to prove analyticity of $e_l \otimes f$.

\begin{prop}
	\label{prop:phi_evo}
	Let $R_f$ be the radius of convergence for the joint analyticity of $f$ with respect to $V$ and $V^*$.
	Let $L$, $L^+$, and $E_n$ be as above and $|t| \leq R_f/3$.
	Then the following limits vanish.
	$$\lim_{n \to \infty} \left\lVert (p_1 \otimes \Id) e^{Lt} E_nf - \sum_{k=0}^{n-1} \frac{t^k}{k!} V^k f \right\rVert_{\mathcal H} = 0, \quad \lim_{n \to \infty} \left\lVert (p_1 \otimes \Id) e^{L^+t} E_nf - \sum_{k=0}^{n-1} \frac{t^k}{k!} (V^*)^k f \right\rVert_{\mathcal H} = 0.$$
\end{prop}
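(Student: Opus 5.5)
The plan is to expand $e^{Lt}E_n f$ as a convergent Taylor series, identify the first $n$ terms using \cref{thm:amplification}(b), and show that the tail starting at $k=n$ goes to zero as $n\to\infty$. By symmetry it suffices to treat $L$; the argument for $L^+$ is identical.

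\emph{Step 1 (Taylor series).} $E_n f = \sum_{i=1}^n F_i e_i \otimes f$ is a finite sum of vectors $e_i\otimes f \in \mathfrak H_0$. The proof of \cref{thm:amplification}(a) shows each such vector is analytic for $L$ with radius at least $R_f/3$. As remarked after that theorem, $e^{t\overline L}$ agrees with the Taylor series on such vectors. Hence, for $|t|$ in the admissible range,
\begin{equation*}
	(p_1\otimes\Id)\, e^{Lt}E_nf=\sum_{k=0}^\infty \frac{t^k}{k!}(p_1\otimes \Id)L^kE_nf ,
\end{equation*}
with the series converging absolutely in $\mathcal H$, because $p_1\otimes\Id$ is bounded.

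\emph{Step 2 (low-order terms).} By \cref{thm:amplification}(b), $(p_1\otimes\Id)L^kE_nf=V^kf$ for every $k<n$. The difference in the statement is therefore exactly the tail
\begin{equation*}
	T_n := \sum_{k=n}^\infty \frac{t^k}{k!}(p_1\otimes\Id)L^kE_nf .
\end{equation*}
It remains to show $\lVert T_n\rVert_{\mathcal H}\to0$.

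\emph{Step 3 (bounding the tail).} Expand $L^k(e_i\otimes f)$ along paths in the graph $\Gamma$, exactly as in the proof of \cref{thm:amplification}(a). The first component of $L^k(e_i\otimes f)$ is a sum of at most $\Gamma^k(1,i)$ words $\pm A_{\epsilon(1)}\cdots A_{\epsilon(k)}f$. It vanishes unless $i\le k+1$, since each step of the tridiagonal matrix moves at most one index. This gives
\begin{equation*}
	\lVert (p_1\otimes\Id)L^kE_nf\rVert_{\mathcal H}\le \Big(\sum_{i=1}^{\min(n,k+1)}F_i\,\Gamma^k(1,i)\Big)\max_{\epsilon}\lVert A_{\epsilon(1)}\cdots A_{\epsilon(k)}f\rVert_{\mathcal H} .
\end{equation*}
Then \cref{lem:radiusA} bounds the max by $\sum_\epsilon \lVert V_{\epsilon(1)}\cdots V_{\epsilon(k)} f\rVert_{\mathcal H}$. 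If the bracketed coefficient is at most $C\,c^k$, with $c$ chosen so that $c|t|<R_f$ on the allowed range of $t$, the bound is uniform in $n$. Then $\lVert T_n\rVert$ is dominated by the tail from $k=n$ of the convergent series $\sum_k \frac{(c|t|)^k}{k!}\sum_\epsilon\lVert V_{\epsilon(1)}\cdots V_{\epsilon(k)}f\rVert$ from \cref{asmp:main}(c), and hence tends to $0$.

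\emph{Main obstacle.} The difficulty is controlling $\sum_i F_i\Gamma^k(1,i)$, because the Fibonacci weights grow like $\varphi^i$. The crude estimate $F_i\le F_{k+1}\le\varphi^{k}$ combined with $\sum_i\Gamma^k(1,i)\le 3^k$ only yields the constant $3\varphi$, which would give the weaker range $|t|<R_f/(3\varphi)$.

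To reach the stated range $|t|\le R_f/3$, I would first try not to bound term by term but to exploit cancellation. One can write $E_n f = Ef - (E-E_n)f$ formally and use the exact identity $L^kEf=EV^kf$ from the proof of \cref{thm:amplification}(b), so that
\begin{equation*}
	T_n=\sum_{k\ge n}\frac{t^k}{k!}\Big(V^kf-(p_1\otimes\Id)L^k(E-E_n)f\Big).
\end{equation*}
The first piece is a tail of $e^{tV}f$ and vanishes. For the second piece, I would use the signs in the $\pm S$ off-diagonal entries: the walks from index $i>n$ back to $1$ come with alternating signs, which should cancel the Fibonacci growth.

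If that cancellation argument does not close, the fallback is to prove the proposition for $|t|<R_f/(3\varphi)$ using the crude estimate. That range is still a fixed interval bounded away from zero, which is all that is used downstream.
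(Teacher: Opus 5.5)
Your Steps 1--3 follow the same route as the paper's proof: the Taylor tail from $k=n$, walk counting with $\Gamma$, and domination by the series in \cref{asmp:main}(c). Your diagnosis of the obstacle is also correct. The paper's displayed estimate uses the coefficient $\sum_{l}\lvert\Gamma^k(1,l)\rvert$ with the Fibonacci factors $F_l$ omitted, and that omission is how it reaches $3\lvert t\rvert<R_f$. As written, however, your proposal does not prove the statement. The sign-cancellation argument is only conjectured. The fallback gives $\lvert t\rvert<R_f/(3\varphi)$, which is strictly weaker than the claimed $\lvert t\rvert\le R_f/3$. Moreover, \cref{thm:tensor_koopman} uses the constant $R/3$ explicitly, so ``some interval bounded away from zero'' is not what is needed downstream.

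The missing idea is that you throw away a factor $2^{k}$ when you bound the maximum over words by the full sum via \cref{lem:radiusA}. A single word already expands into $2^k$ words with coefficients $\pm 2^{-k}$:
\begin{equation*}
	A_{\epsilon(1)}\cdots A_{\epsilon(k)} = 2^{-k}\sum_{\delta\colon[k]\to[2]}\pm V_{\delta(1)}\cdots V_{\delta(k)}, \qquad \max_{\epsilon}\lVert A_{\epsilon(1)}\cdots A_{\epsilon(k)}f\rVert_{\mathcal H}\le 2^{-k}\sum_{\delta\colon[k]\to[2]}\lVert V_{\delta(1)}\cdots V_{\delta(k)}f\rVert_{\mathcal H}.
\end{equation*}
Keep your crude count: $F_l\le F_n\le\varphi^{k}$ for $l\le n\le k$, and $\sum_l\Gamma^k(1,l)\le 3^k$. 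Then the $k$-th tail term is at most $\frac{(3\varphi\lvert t\rvert/2)^k}{k!}\sum_\delta\lVert V_{\delta(1)}\cdots V_{\delta(k)}f\rVert_{\mathcal H}$. For $\lvert t\rvert\le R_f/3$ you get $\tfrac{3\varphi}{2}\lvert t\rvert\le\tfrac{\varphi}{2}R_f<R_f$. So $T_n$ is dominated by a tail of the convergent series in \cref{asmp:main}(c) and tends to zero, and no cancellation argument is needed.

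The same factor also fixes Step 1 at the endpoint. It shows that $e_l\otimes f$ has radius of analyticity at least $2R_f/3$ for $L$. Hence the Taylor expansion of $e^{t\overline L}E_nf$ is valid on the closed interval $\lvert t\rvert\le R_f/3$. A radius of $R_f/3$, as in your Step 1 and in the paper, only covers the open interval.
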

\begin{proof}
	Let $A_1 = A$ and $A_2 = S$.
	We have
	\begin{align*}
		\left\lVert (p_1 \otimes \Id) e^{Lt} E_nf - \sum_{k=0}^{n-1} \frac{t^k}{k!} V^k f \right\rVert_{\mathcal H}
		 & = \left\lVert \sum_{k=n}^\infty \frac{t^k}{k!} (p_1 \otimes \Id)L^k E_n f \right\rVert_{\mathcal H}                                                                                                    \\
		 & \leq  \sum_{k=n}^\infty \frac{|t|^k}{k!} \left(\sum_{l=1}^{k} |\Gamma^k(1,l)|\right) \max_{\epsilon \colon [k] \to [2]} \left\lVert  A_{\epsilon(1)} \cdots A_{\epsilon(k)}f \right\rVert_{\mathcal H} \\
		 & \leq  \sum_{k=n}^\infty \frac{k(3|t|)^k}{k!} \sum_{\epsilon \colon [k] \to [2]} \left\lVert  A_{\epsilon(1)} \cdots A_{\epsilon(k)}f \right\rVert_{\mathcal H}.
	\end{align*}
	Since this is the tail of an absolutely convergent sum, provided $3|t| < R_f$, we have the result
	\begin{equation*}
		\lim_{n \to \infty} \left\lVert (p_1 \otimes \Id) e^{Lt} E_nf - \sum_{k=0}^n \frac{t^k}{k!} V^k f \right\rVert_{\mathcal H} = 0.
	\end{equation*}
	The limit for $L^+$ follows by symmetry.
\end{proof}

By an $\varepsilon/2$ trick and due to analyticity of $f$, we also have well-definedness of $e^{Vt} f$ and $e^{V^*t} f$ for $|t| < R_f$ and convergence of
$$\lim_{n \to \infty} \left\lVert (p_1 \otimes \Id) e^{Lt} E_nf - e^{Vt} f \right\rVert_{\mathcal H} = 0, \quad \lim_{n \to \infty} \left\lVert (p_1 \otimes \Id) e^{L^+t} E_nf - e^{V^*t} f \right\rVert_{\mathcal H} = 0$$
for $|t| < R_f/3$.
As a result, by \cref{prop:taylor_koopman}, we have
\begin{equation}
	\label{eq:koopman_recovery_from_l}
	\lim_{n \to \infty} \left\lVert (p_1 \otimes \Id) e^{Lt} E_nf - U_t f \right\rVert_{\mathcal H} = 0,
\end{equation}
so the dilation of $A$ to $L$ recovers the Koopman evolution $U_t f$ in $\mathcal H$, again up to time $|t| < R_f/3$.
Note that we do not have an analogous result for the transfer operator evolution under $P^t$ since our proof of \cref{prop:taylor_koopman} relied on the fact that $V$ satisfies the Leibniz rule (whereas, in general, $V^*$ does not).

Another aspect of \cref{prop:phi_evo} and related results such as~\eqref{eq:koopman_recovery_from_l} worth keeping in mind is that the radius of convergence $R_f$ need not have a nonzero lower bound over $f \in \mathcal H_\infty$.
In other words, the evolution time $t$ for which $e^{tL}$ faithfully recovers the true Koopman evolution $U_t$ on $\mathcal H_\infty$ may be arbitrarily small.
To overcome this limitation, in \cref{sec:fock_space} we will perform an additional amplification to a weighted Fock space with commutative Banach algebra structure, on which $L$ induces a free derivation.
Under \cref{asmp:alg}, this will allow us to consistently recover the Koopman evolution of observables in finitely generated, dense subalgebras of $\mathcal H$ for evolution times that are uniformly bounded away from 0.

\begin{rk}
	In the special case that $V$ is antisymmetric as an unbounded operator on $\mathcal H$, we have
	\begin{equation}
		\label{eq:l_v_skew_adj}
		L = -L^+ =\Id \otimes V.
	\end{equation}
	As a result, if $\phi \in \mathcal H_\infty$ is an eigenvector of $V$ then $\eta \otimes \phi$ is an eigenvector of $L$ for any nonzero $\eta \in \ell^2(\mathbb N)$.
	In particular, if $V$ is diagonalizable with an orthonormal basis of eigenvectors $\phi_j$ for $\mathcal H$, then $L$ is also diagonalizable with $e_i \otimes \phi_j$ forming an orthonormal basis of $\mathfrak H$.
\end{rk}

\begin{rk}
	\label{rk:alt_first_amplification}
	In \cref{app:alt_first_amplification} we present an alternative strategy, wherein $V$ and $V^*$ are lifted to essentially-skew adjoint operators, $L'$ and $L^{\prime +}$, respectively, that are represented by hollow tridiagonal matrices with off-diagonal entries given by weighted linear combinations of $V$ and $V^*$.
	These operators satisfy an analog of \cref{thm:amplification} (\cref{thm:alt_amplification}), and the results obtained in the remainder of the paper for $L$ and $L^+$ also hold for $L'$ and $L^{\prime+}$.
	However, a key difference between $L$ and $L'$ is that the latter does not factorize as in~\eqref{eq:l_v_skew_adj} when $V$ is antisymmetric.
	In consequence, we cannot deduce existence of eigenvectors of $L'$ on the basis of existence of eigenvectors of $V$.
	For this reason, the Fibonacci amplification approach presented in this section is our method of choice.
	We include the alternative approach in \cref{app:alt_first_amplification} as we believe it is of independent interest.
\end{rk}

\section{Amplification to weighted Fock space}
\label{sec:fock_space}

In our second amplification we will lift the skew-adjoint operator $L$ from \cref{sec:first_amplification} to an  operator acting as a derivation on a weighted Fock space with RKHA structure.

\subsection{Amplification from finite-dimensional subspace}
\label{sec:fock_space_finite_dim}

We choose a finite-dimensional subspace $\mathcal S \subset \mathcal H_\infty$ from which we recover the full dynamics $\Phi^t$ from a unitary group of composition operators acting on an RKHA.
For this to work, we use \cref{asmp:alg}.
This is not a significant restriction as \cref{prop:example} provides a large class of flows and analytic vectors $\mathcal S = \mathcal H_l \subset \mathcal H_\infty$ satisfying $\mathcal H = \overline{\alg(\mathcal H_l)}$ for  $l>0$ sufficiently large.

Let $R$ be the maximal radius of joint analyticity for all $f \in \mathcal S$ with respect to $V$ and $V^*$.
Then, by \cref{prop:algdomain}, $U_t f = e^{Vt}f = f \circ \Phi^t$ is a densely defined map,
$$U_t \colon \alg(\mathcal S) \to \mathcal H.$$
By the results in \cref{sec:first_amplification}, on the smaller interval $|t| < R/3$, $U_t|_{\mathcal S}$ can be recovered from the unitary evolution $e^{Lt}$,
$$\lim_{n \to \infty} (p_1 \otimes \Id) e^{Lt} E_n f = U_t f, \quad \forall f \in \mathcal S.$$
This limit has no reason to hold for functions $f \in \alg(\mathcal S)$ since $V^*$ and $L$ are not derivations which was crucial for \cref{prop:algdomain}.

To circumvent this obstruction, we dilate to the weighted Fock space
\begin{equation}
	\label{eq:weighted_fock_space}
	\mathfrak F = F_w(\mathfrak H) = \overline{\bigoplus_{n=0}^\infty \left(\mathfrak H^{\vee n}, w(n)^2\langle\cdot,\cdot\rangle_{\mathfrak H^{\vee n}} \right)},
\end{equation}
to take advantage of multiplicativity of the Koopman operator.
This space contains the free commutative algebra generated by $\mathcal S$, also called the tensor algebra:
$$T(\mathcal S) \cong T(e_1 \otimes \mathcal S) \subset T(\mathfrak H) =\bigcup_{N >0} \bigoplus_{n=0}^N \mathfrak H^{\vee n} \subset F_w(\mathfrak H).$$
In \eqref{eq:weighted_fock_space}, $\vee$ is the symmetric tensor product (see \eqref{eq:sym_tensor_prooduct} in \cref{app:Fock_RKHA}), $\mathfrak H^{\vee n}$ is the closed subspace of $\mathfrak H^{\otimes n}$ consisting of symmetric tensors, and $w(n)^2 \langle \cdot, \cdot \rangle_{\mathfrak H^{\vee n}}$ represents the inner product of $\mathfrak H^{\vee n}$ multiplied by the factor $w(n)^2$.
Further details on these constructions can be found in \cref{app:rkha}.

As shown in \cite{GiannakisEtAl25}, if $w \colon \mathbb N_0 \to \mathbb R_+$ is inverse square-summable and subconvolutive (see \eqref{eq:subconv_fock_space_weights} in \cref{app:Fock_RKHA}), $F_w(\mathfrak H)$ supports a bounded coproduct $\Delta \colon \mathfrak F \to \mathfrak F \otimes \mathfrak F$ whose adjoint implements the symmetric tensor product,
\begin{equation}
	\label{eq:fock_space_coproduct_adj}
	\Delta^* (\eta \otimes \xi) = \eta \vee \xi \equiv \frac{\eta \otimes \xi + \xi \otimes \eta}{2}.
\end{equation}
As a result, $\mathfrak F$ has the structure of a commutative, unital Banach algebra with respect to the symmetric tensor product $\vee$, for a norm equivalent to the Hilbert space norm and with $1_{\mathbb C}$ (i.e., the vacuum vector of the Fock space) acting as the unit.
By Gelfand duality, $\mathfrak F$ is isomorphic to a space $\widehat{\mathfrak F} \subset C(\sigma(\mathfrak F))$ of continuous functions on the character spectrum, $\sigma(\mathfrak F) \subset \mathfrak F^*$, of $\mathfrak F$.
This latter space $\widehat{\mathfrak F}$ turns out to be a unital RKHA.
Thus, we may work interchangeably in the weighted Fock space setting of $\mathfrak F$, or the function space/RKHA setting of $\widehat{\mathfrak F}$.
Further discussion on the weighted Fock space is included in \cref{app:Fock_RKHA}.

In this paper, we nominally work with weights in the subexponential family (cf.~\eqref{eq:subexp_weights})
\begin{equation}
	\label{eq:subexp_fock_space_weights}
	w(n) = e^{\tau n^p}, \quad \tau > 0, \quad p \in (0, 1).
\end{equation}
While many of our results hold for general weights satisfying~\eqref{eq:subconv_fock_space_weights}, the particular choice in~\eqref{eq:subexp_fock_space_weights} will be important later on in \cref{sec:state_space_embedding_and_metric}.

Next, let $\mathbb H, \mathbb K$ be Hilbert spaces, and $A\colon \mathbb H \to \mathbb K$ a bounded linear map.
If $\lVert A \rVert \leq 1$, we lift $A$ to a bounded operator $F(A) \in F_w(\mathbb H) \to F_w(\mathbb K)$ defined as $F(A) = \bigoplus_{k=0}^\infty A^{\vee k}$.
If $\lVert A \rVert >1$, this defines a closable operator whose domain includes the tensor algebra $T(\mathbb H)$,
$$\dom(A) \supset T(\mathbb H) =\bigcup_{N >0} \bigoplus_{k=0}^N \mathbb H^{\vee k}.$$
We will mainly use
$$F(p_1 \otimes \Id)\colon F_w(\mathfrak H) \to F_w(\mathcal H), \quad F(e^{Lt}) \in B(F_w(\mathfrak H)), \quad F(E_n) \colon T(\mathcal H) \to T(\mathfrak H),$$
which are also maps between tensor algebras.

The weighted symmetric Fock space $\mathfrak F = F_w(\mathfrak H)$ also supports the unbounded antisymmetric operator $\mathcal D = 0 \oplus \bigoplus_{n=1}^\infty \Id^{\vee n-1} \vee L$, whose closure, $\overline{\mathcal D}$, is the generator of $F(e^{\overline{L}t})$.
By the same argument from \cite{ReedSimon80}*{Section~VIII.10, Example~2}, $\mathcal D$ is essentially skew-adjoint on the tensor algebra $T(\mathfrak H_0)$ and $F(e^{\overline{L}t}) = e^{\overline{\mathcal D} t}$.
We also need the densely defined operator
$$\displaystyle \mathfrak m = \sum_{k=0}^\infty \Delta^*_k \colon T(\mathcal H) \to \mathcal H, \quad \mathfrak m \left( z \oplus \bigoplus_{n=1}^N f_n  \right) = z 1_{\mathcal H} + \sum_{n=1}^N \Delta^*_n f_n,$$
to map back down to $\mathcal H$.
While $\mathfrak m$ as defined requires unitality of $\mathcal H$, if $\mathcal H$ is nonunital then $\mathcal H$ can be replaced with a unital RKHA, $\mathbb C \oplus \mathcal H \subset C^\infty(X)$ with the kernel $\tilde{k}(x,y) =1+k(x,y)$ as shown in \cite{GiannakisMontgomery25}.

Since the algebra generated by $\mathcal S$ densely spans $\mathcal H$, for any $f \in \mathcal H$ and $\epsilon > 0$, we can approximate $f$ by $\mathfrak m \tilde{f}$ where $ \tilde{f} = \sum_{k=0}^l \tilde{f}_k \in T(\mathcal S)$ for $\tilde{f}_k \in \mathcal S^{\vee k}$ and $\lVert f - \mathfrak m \tilde f \rVert_{\mathcal H} < \epsilon$.
Concretely, we can build $\tilde f$ by defining the nested family of subspaces
$$\mathcal H_l = \left\{\mathfrak m \tilde{f}:\, \tilde{f} = \sum_{k=0}^l \tilde{f}_k : \tilde f_k \in \mathcal S^{\vee k} \right\},$$
where $\mathcal H_l \subset \mathcal H_{l+1}$ and $\bigcup_{l \in \mathbb N} \mathcal H_l$ lies dense in $\mathcal H$.
We can then choose $\tilde f = \Delta_l^{*\dag} \proj_{\mathcal H_l} f $, where $\proj_{\mathcal H_l} \colon \mathcal H \to \mathcal H$ is the orthogonal projection onto $\mathcal H_l$, $\Delta_l^{*\dag} \colon \mathcal H \to \mathcal H^{\otimes l}$ is the pseudoinverse of $\Delta_l^*$ (i.e., $\Delta_l^* \Delta_l^{*\dag} = \Id_{\mathcal H}$ and $\Delta_l^{*\dag}$ is bounded since $\Delta_l^*$ has closed range when $\mathcal H$ is unital), and $l \in \mathbb N$ is sufficiently large.
In the context of the torus examples of \cref{sec:poly_examples}, the spaces $\mathcal H_l$ coincide with the spaces of bandlimited functions defined in~\eqref{eq:h_bandlimited} when $\mathcal S = \mathcal H_1$, and $\mathfrak m \tilde f$ becomes a bandlimited approximation of $f$ in the Fourier basis on the torus.
Besides this approach, many other schemes are available for approximating $f \in \mathcal H$ by ``lifted'' elements $\tilde f$ in finitely generated subspaces of the Fock space $\mathfrak F$.
For example, when $\tilde f$ is strictly positive, \cite{GiannakisEtAl24} employs nonlinear lifts of the form $ f \mapsto \tilde f = (\proj_{\mathcal S} f^{1/l})^{\otimes l}$ where the $l$-th root $f^{1/l} \in \mathcal H$ is given by the holomorphic functional calculus on the RKHA $\mathcal H$.

\begin{thm}
	\label{thm:tensor_koopman}
	Given the same notation as above and $|t| \leq R/3$,
	$$\lim_{n \to \infty} \left\lVert \mathfrak m\left( F(p_1 \otimes \Id) e^{\overline{\mathcal D} t} F(E_n) \tilde{f} \right) - U_t \mathfrak m\tilde{f} \right\rVert_\mathcal H =0.$$
\end{thm}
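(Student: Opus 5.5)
By linearity and finiteness of $\tilde f=\sum_{k=0}^l \tilde f_k$, it suffices to prove the limit for a single homogeneous elementary tensor $\tilde f_k = f_1\vee\cdots\vee f_k$ with $f_j\in\mathcal S$. The $k=0$ term is trivial: $\mathcal D$ annihilates the vacuum and $U_t 1_{\mathcal H}=1_{\mathcal H}$. Sums of elementary tensors in $\mathcal S^{\vee k}$ are finite because $\mathcal S$ is finite-dimensional, so the reduction is harmless.

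For such a tensor, the functorial structure does the work. First, $F(E_n)\tilde f_k = E_nf_1\vee\cdots\vee E_nf_k$. Next, $e^{\overline{\mathcal D}t}=F(e^{\overline L t})$ acts factorwise, giving $e^{\overline Lt}E_nf_1\vee\cdots\vee e^{\overline Lt}E_nf_k$. Since $E_nf_j\in\mathfrak H_0$ consists of analytic vectors, $e^{\overline L t}E_nf_j=e^{Lt}E_nf_j$ is given by its Taylor series. Applying $F(p_1\otimes\Id)$, which again acts factorwise on the $k$-th symmetric power, yields $g_{1,n}\vee\cdots\vee g_{k,n}$ with $g_{j,n}:=(p_1\otimes\Id)e^{Lt}E_nf_j$.

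Although $p_1\otimes\Id$ has norm one, the vectors $e^{Lt}E_nf_j$ have unbounded norm, since the Fibonacci weights grow with $n$. Only the projected vectors are controlled, so the argument should be applied only after projecting. Finally, $\mathfrak m$ restricted to $\mathcal H^{\vee k}$ is $\Delta_k^*$, so $\mathfrak m(g_{1,n}\vee\cdots\vee g_{k,n})=g_{1,n}\cdots g_{k,n}$ is the pointwise product in $\mathcal H$. On the other side, $U_t\mathfrak m\tilde f_k=U_t(f_1\cdots f_k)=(U_tf_1)\cdots(U_tf_k)$. This holds because $U_t=e^{Vt}$ on $\alg(\mathcal S)$ for $|t|<R$ by \cref{prop:algdomain} and \cref{prop:taylor_koopman}, and $e^{Vt}$ is an algebra homomorphism there.

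The remaining estimate is continuity of multiplication. By \cref{prop:phi_evo} and the subsequent remark, together with \eqref{eq:koopman_recovery_from_l}, $g_{j,n}\to U_tf_j$ in $\mathcal H$ for each $j$. Since $\mathcal H$ is an RKHA, $\Delta_k^*$ is bounded, so $\lVert h_1\cdots h_k\rVert_{\mathcal H}\le \lVert\Delta_k^*\rVert\prod\lVert h_j\rVert_{\mathcal H}$. A telescoping decomposition
\[
g_{1,n}\cdots g_{k,n}-U_tf_1\cdots U_tf_k=\sum_{j=1}^k g_{1,n}\cdots g_{j-1,n}\,(g_{j,n}-U_tf_j)\,U_tf_{j+1}\cdots U_tf_k,
\]
combined with the uniform boundedness of the convergent sequences $g_{j,n}$, then gives convergence to zero.

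The main obstacle is the time range. \cref{prop:phi_evo} is stated for $|t|\le R_f/3$, but the $\varepsilon/2$ argument and the tail bound genuinely need $3|t|<R_f$. I would therefore handle the endpoint $|t|=R/3$ by using that $R$ is the joint radius for all of $\mathcal S$, arguing that each $f\in\mathcal S$ has radius at least $R$. If the boundary case actually fails, the statement should instead be read as holding for $|t|<R/3$. A secondary point to check carefully is that $F(p_1\otimes\Id)$ and $\mathfrak m$ are well defined on the vectors $e^{\overline{\mathcal D}t}F(E_n)\tilde f$, which lie in the $k$-th symmetric power. This is fine, because $F(p_1\otimes\Id)$ is bounded as $\lVert p_1\otimes\Id\rVert\le1$, and only finitely many tensor degrees occur.
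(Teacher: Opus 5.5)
Your argument is correct and is essentially the paper's proof. Both rest on four ingredients: the factorwise action of $F(E_n)$, $e^{\overline{\mathcal D}t}=F(e^{\overline{L}t})$ and $F(p_1\otimes\Id)$ on $\mathcal S^{\vee k}$; the convergence $(p_1\otimes\Id)e^{Lt}E_nf\to e^{Vt}f=U_tf$ from \cref{prop:phi_evo}; multiplicativity of $e^{Vt}$ on $\alg(\mathcal S)$; and boundedness of $\mathfrak m$ in each finite degree. The only cosmetic difference is that the paper does the continuity step at the tensor level, via norm convergence $a_n^{\vee k}\to a^{\vee k}$ of the restrictions to the finite-dimensional $\mathcal S$, whereas you telescope the pointwise products after applying $\Delta_k^*$.

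Your worry about the endpoint $|t|=R/3$ is justified, and your proposed fix does not resolve it: knowing $R_f\ge R$ does not give the strict inequality $3|t|<R_f$ that the tail bound needs. However, the paper's proof relies on the same \cref{prop:phi_evo} and has exactly the same limitation, so the result is really established for $|t|<R/3$.
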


\begin{proof}
	The claim follows from three observations.
	First, $\tilde{f}$ belongs to the tensor algebra $T(\mathcal S)$ of the finite-dimensional Hilbert space $\mathcal S$, by construction.
	In particular, there exists $l \in \mathbb N$ such that the vectors $F(e^{Vt}) \tilde f$ and $F(p_1 \otimes \Id) e^{\overline{\mathcal D} t} F(E_n) \tilde{f}$ lie in $\mathfrak H^{\vee l}$ for all $\lvert t \rvert \leq R/3$ and $n \in \mathbb N$.
	Second, we have
	$$\lim_{n \to \infty} \left\lVert (p_1 \otimes \Id) e^{Lt} E_n|_{\mathcal S} - e^{Vt}|_{\mathcal S} \right\rVert_{\mathcal H} =0,$$
	by \cref{prop:phi_evo} and finite dimensionality of $\mathcal S$.
	Third, $e^{Vt}$ is multiplicative, by \cref{prop:algdomain}.
	With these observations, and the fact that norm convergence $a_n \to a$ of operators on a Hilbert space implies norm convergence of $a_n^{\vee k} \to a^{\vee k}$, we obtain,
	\begin{multline*}
		\lim_{n \to \infty} \left\lVert \mathfrak m\left( F(p_1 \otimes \Id) F(e^{Lt}) F(E_n) \tilde{f} \right) - e^{Vt} \mathfrak m\tilde{f} \right\rVert^2_{\mathcal H} \\
		\begin{aligned}
			 & =\lim_{n \to \infty} \left\lVert \mathfrak m\left( F(p_1 \otimes \Id) F(e^{Lt}) F(E_n) \tilde{f}\right) - \mathfrak m\left( F(e^{Vt}) \tilde{f}\right) \right\rVert^2_{\mathcal H}                                                                           \\
			 & \leq \lim_{n \to \infty} \left\lVert \mathfrak m|_{\mathcal H^{\vee l}} \right\rVert^2 \sum_{k=0}^l w(k)^2\left\lVert \left. \left((p_1 \otimes \Id) e^{Lt} E_n- e^{Vt}\right)\right|_{\mathcal S}^{\vee k} \tilde{f_k} \right\rVert_{\mathcal H^{\vee k}}^2 \\
			 & =0.
		\end{aligned}
	\end{multline*}
\end{proof}

This accomplishes a uniform time interval over which the Koopman evolutions of functions from a dense domain, $\ran(\mathfrak m) \subset \mathcal H$, can be recovered by a unitary operator, $e^{\overline{\mathcal D} t}$.
\Cref{thm:tensor_koopman} is summarized by the diagram below.
All maps below are algebra homomorphisms and the diagram commutes in the limit as $n$ goes to infinity.
$$ \begin{tikzcd}
		T(\mathfrak H) \arrow{r}{e^{\mathcal D t}}  & T(\mathfrak H) \arrow{d}{F(p_1 \otimes \Id)} \\
		T(\mathcal S) \arrow{u}{F(E_n)} \arrow[swap]{d}{\mathfrak m} & T(\mathcal H) \arrow{d}{\mathfrak m} \\
		\text{dom}(U_t) \arrow{r}{U_t}& \mathcal H
	\end{tikzcd}
$$

Iterating the approximations from \cref{thm:tensor_koopman}, we can uniformly approximate the Koopman evolution of general continuous observables vanishing at infinity, over any finite time interval.
In what follows, $U^t\colon C_0(X) \to C_0(X)$, $t \in \mathbb R$, denotes the Koopman operators on $C_0(X)$, $U^t f = f \circ \Phi^t$, which act as a strongly continuous group of isometries with respect to the supremum norm.

\begin{cor}
	\label{cor:uniform_approx}
	For any time $t>0$ any $\epsilon >0$, we may partition the interval $[0,t]$ into subintervals $\{ [t_k,t_{k+1}] \}_{k=0}^N$ such that $|t_{k+1}-t_k| < R/3$.
	Then, by \cref{thm:tensor_koopman}, for every $f \in \mathcal H$, there exists a sequence of positive integers $\{n_k\}_{k=0}^N$ and functions $\{\tilde{f}_k\}_{k=0}^N \subset T(\mathcal S)$ satisfying
	\begin{gather*}
		\left\lVert \mathfrak m\left( F(p_1 \otimes \Id) e^{\mathcal D (t_{k+1}-t_k)} F(E_{n_k}) \tilde{f}_k \right) - \mathfrak m \tilde{f}_{k+1} \right\rVert_\mathcal H \leq \frac{\epsilon}{2(N+2) \lVert \Delta \rVert},\\
		\left\lVert \mathfrak m\left( F(p_1 \otimes \Id) e^{\mathcal D (t_{k+1}-t_k)} F(E_{n_k}) \tilde{f}_k \right) - U_{t_{k+1}-t_k} \mathfrak m\tilde{f}_k \right\rVert_\mathcal H \leq \frac{\epsilon}{2(N+2) \lVert \Delta \rVert}, \\
		\left\lVert f - \mathfrak m\tilde{f}_0 \right\rVert_{\infty} \leq \frac{\epsilon}{N+2}.
	\end{gather*}
	Then,
	$$\left\lVert U^tf - \mathfrak m\left( F(p_1 \otimes \Id) e^{\mathcal D (t_{N+1}-t_N)} F(E_{n_N}) \tilde{f}_N \right) \right\rVert_\infty \leq \epsilon.$$
\end{cor}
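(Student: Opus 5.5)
The plan is a telescoping argument in the supremum norm, anchored by three facts. First, $U^s$ is an isometry of $C_0(X)$ (or of $C_b(X)$) in $\lVert\cdot\rVert_\infty$. Second, on $\mathcal H$ the sup norm is dominated by the RKHS norm: by \eqref{eq:rkha_kernel_bound} and \eqref{eq:rkha_boundedness}, $\lVert g\rVert_\infty \le \lVert \Delta\rVert \lVert g\rVert_{\mathcal H}$. Third, $U_s$ agrees with $U^s$ on $\dom(U_s)$, since both are composition by $\Phi^s$.

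Write $s_k = t_{k+1}-t_k$ and $g_k = \mathfrak m\bigl( F(p_1 \otimes \Id) e^{\mathcal D s_k} F(E_{n_k}) \tilde{f}_k \bigr)$. The $\tilde f_k$ and $n_k$ are constructed recursively. Choose $\tilde f_0$ first, which is possible because $\ran(\mathfrak m)$ is dense in $\mathcal H$ and $\mathcal H$-norm closeness implies sup-norm closeness. Given $\tilde f_k$, pick $n_k$ by \cref{thm:tensor_koopman} (valid since $s_k < R/3$) so that the second displayed inequality holds. Then pick $\tilde f_{k+1}\in T(\mathcal S)$ with $\mathfrak m\tilde f_{k+1}$ within the prescribed $\mathcal H$-distance of $g_k$, again using density of $\ran(\mathfrak m)$. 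This justifies the existence claims in the statement.

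For the estimate, set $t_0=0$ and $t_{N+1}=t$ and telescope:
\[
U^t f - g_N = U^{t}(f-\mathfrak m\tilde f_0) + \sum_{k=0}^{N} U^{t-t_{k+1}}\bigl(U^{s_k}\mathfrak m\tilde f_k - \mathfrak m\tilde f_{k+1}\bigr) + (\mathfrak m \tilde f_{N+1} - g_N).
\]
Here $\mathfrak m\tilde f_{N+1}$ is an auxiliary choice. It is cleaner to avoid it by writing the last summand $k=N$ as $U^{s_N}\mathfrak m\tilde f_N - g_N$. Each middle term satisfies $\lVert U^{s_k}\mathfrak m\tilde f_k - \mathfrak m\tilde f_{k+1}\rVert_\infty \le \lVert\Delta\rVert\bigl(\lVert U_{s_k}\mathfrak m\tilde f_k - g_k\rVert_{\mathcal H} + \lVert g_k - \mathfrak m\tilde f_{k+1}\rVert_{\mathcal H}\bigr) \le \epsilon/(N+2)$, using isometry of $U^{t-t_{k+1}}$. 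The first term is at most $\epsilon/(N+2)$ by hypothesis. The last term, in its cleaner form, is bounded by $\lVert\Delta\rVert \cdot \epsilon/(2(N+2)\lVert\Delta\rVert)$. Summing gives at most $(N+2)\epsilon/(N+2)=\epsilon$.

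The main obstacle is bookkeeping rather than analysis. One must check that $\mathfrak m\tilde f_k\in\dom(U_{s_k})$ so that $U_{s_k}$ and $U^{s_k}$ coincide; this holds because $\mathfrak m\tilde f_k\in\alg(\mathcal S)$ and \cref{prop:algdomain} applies. One must also confirm that $\mathcal H\subset C_0(X)$, or else interpret $U^t$ on $C_b(X)$, where it is still a sup-norm isometry. Finally, the count of error terms has to match the $N+2$ in the denominators.
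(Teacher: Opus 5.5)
Your proposal is correct and follows essentially the same route as the paper. The paper chains the same triangle inequalities step by step, using the sup-norm isometry of $U^s$ together with $\lVert g\rVert_\infty \le \lVert \Delta\rVert \lVert g\rVert_{\mathcal H}$; this is exactly your telescoping sum in its cleaned form with $N+2$ error terms. Your extra bookkeeping fills in details the paper leaves implicit: the recursive choice of $n_k$ and $\tilde f_k$, the check that $\mathfrak m\tilde f_k\in\dom(U_{s_k})$ via \cref{prop:algdomain}, and the caveat about $\mathcal H\subset C_0(X)$ versus $C_b(X)$.
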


\begin{proof}
	Recall the boundedness property~\eqref{eq:rkha_boundedness} for elements of the RKHA $\mathcal H$.
	The conditions above, together with this property, imply that
	$$\left\lVert \mathfrak m \tilde{f}_{k+1} - U_{t_{k+1}-t_k} \mathfrak m\tilde{f}_k \right \rVert_\infty \leq \frac{\epsilon}{N+2}.$$
	Then by stringing together these inequalities,
	\begin{align*}
		\left\lVert U^tf - \mathfrak m\left( F(p_1 \otimes \Id) e^{\mathcal D (t_{N+1}-t_N)} F(E_{n_N}) \tilde{f}_N \right) \right\rVert_\infty & \leq \left\lVert U^tf - U^{t-t_N} \mathfrak m \tilde{f}_N \right\rVert_\infty + \frac{\epsilon}{2(N+2)}               \\
		                                                                                                                                        & \leq \left\lVert U^{t-t_N}\left( U^{t_N}f - \mathfrak m\tilde{f}_N \right) \right\rVert_\infty + \frac{\epsilon}{N+2} \\
		                                                                                                                                        & =\left\lVert U^{t_N}f - \mathfrak m\tilde{f}_N \right\rVert_\infty + \frac{\epsilon}{N+2}                             \\
		                                                                                                                                        & \leq \left\lVert U^{t_{N-1}}f - \mathfrak m\tilde{f}_{N-1} \right\rVert_\infty + \frac{2\epsilon}{N+2}                \\
		                                                                                                                                        & \cdots                                                                                                                \\
		                                                                                                                                        & \leq \left\lVert U^{t_0}f - \mathfrak m\tilde{f}_0 \right\rVert_\infty + \frac{(N+1)\epsilon}{N+2}                    \\
		                                                                                                                                        & = \left\lVert f - \mathfrak m \tilde{f}_0 \right\rVert_\infty + \frac{(N+1)\epsilon}{N+2}                             \\
		                                                                                                                                        & \leq \epsilon.
	\end{align*}
\end{proof}

\begin{rk}
	\label{rk:uniform_approx}
	If $\mathcal H$ is a dense subspace of $C_0(X)$ then a version of \cref{cor:uniform_approx} holds for $f \in C_0(X)$.
	In other words, the amplification to the weighted Fock space achieves uniform approximation of the Koopman evolution of arbitrary continuous observables vanishing at infinity for arbitrarily long finite time intervals.
\end{rk}

\subsection{Spectral triples}
\label{sec:spectral_triples}

We employ the following definition of a weak spectral triple throughout this paper.

\begin{defn}
	\label{defn:spec_triple}
	A weak spectral triple $(\mathbb A, \mathbb H, \mathbb D)$ consists of a Hilbert space $\mathbb H$, a self-adjoint operator $\mathbb D \colon \dom(\mathbb D) \to \mathbb H$ (called Dirac operator), and a unital $*$-algebra $\mathbb A \subseteq B(\mathbb H)$, such that for every $a \in \mathbb A$, $[\mathbb D, a] = \mathbb D a - a \mathbb D$, is densely defined and extends to a bounded operator on $\mathbb H$.
\end{defn}

As noted in \cref{sec:contributions}, \cref{defn:spec_triple} is weaker than Connes' spectral triple \cite{Connes94} by dropping the compactness of the resolvent, but unitality of the algebra, self-adjointness of the Dirac operator, and boundedness of the commutators are retained.
This means that we lose notions of spectral volume and dimension, but, as in the standard framework, our weak spectral triples support strongly continuous unitary groups generated by the Dirac operator and have associated Lipschitz seminorms, $\mathsf L \colon \mathbb A \to [0, \infty)$, $\mathsf L(a) = \lVert [\mathbb D, a] \rVert$, obeying the Leibniz inequality,
\begin{equation*}
	\mathsf L(a b) \leq \lVert a \rVert \mathsf L(b) + \mathsf L(a) \lVert b \rVert.
\end{equation*}
Letting $S(\mathbb A)$ denote the state space of the $C^*$-algebra closure $C^*(\mathbb A) \subset B(\mathbb H)$, i.e., the set of continuous, positive, unital linear functionals on $C^*(\mathbb A)$, the Lipschitz seminorm $\mathsf L$ induces an extended pseudometric $d\colon S(\mathbb A) \to [0, \infty]$, defined as
\begin{equation}
	\label{eq:metric}
	d(\varphi, \psi) = \sup_{a \in \mathbb A}\left\{ \lvert \varphi(a) - \psi(a) \rvert: \mathsf L(a) \leq 1 \right\}.
\end{equation}
Note that, in contrast to the seminorms employed in the theory of compact quantum metric spaces \cite{Rieffel99,Rieffel04}, $\mathsf L$ may have a larger kernel than scalar multiples of the unit of $\mathbb A$, allowing $d$ to take infinite values.

We now build a sequence of weak spectral triples on the weighted symmetric Fock space.
First, recall (e.g., \cite{Hall13,Lehmann04}) that for the (unweighted) symmetric Fock space $F(\mathbb H) = \overline{\bigoplus_{n=0}^\infty \mathbb H^{\vee n}}$ generated by a Hilbert space $\mathbb H$, the creation operator $a_u^+$ associated with $u \in \mathbb H \setminus \{0\}$ is the densely defined unbounded operator on $F(\mathbb H)$ that acts by symmetric tensor multiplication by $u$,
\begin{equation}
	\label{eq:creation_op}
	a_u^+(v_1 \vee \cdots \vee v_n) = u \vee v_1 \vee \cdots \vee v_n, \quad v_1, \ldots, v_n \in \mathbb H.
\end{equation}
Moreover, the annihilation operator, $a_u$, is defined as
\begin{equation*}
	a_u (v_1 \vee \cdots \vee v_n) = \frac{1}{n} \sum_{i=1}^n \langle u, v_i\rangle_{\mathbb H} v_1 \vee \cdots \vee v_{i-1} \vee v_{i+1} \vee \cdots \vee v_n, \quad a_u 1_{\mathbb C} = 0.
\end{equation*}
These operators are formal adjoints of one another on the tensor algebra $T(\mathbb H) \subset F(\mathbb H)$,
\begin{equation*}
	\langle \eta, a_u \xi \rangle_{F(\mathbb H)} = \langle a_u^+ \eta, \xi \rangle_{F(\mathbb H)}, \quad \forall \eta,\xi \in T(\mathbb H),
\end{equation*}
which may be taken as a common dense domain.
In addition, they satisfy the canonical commutation relations (CCRs),
\begin{equation*}
	[a_u, a_v^+] = \langle u, v \rangle_{\mathbb H} \Id.
\end{equation*}
In Fock space theory, the eigenvectors of annihilation operators are known as coherent states.
They and their generalizations find applications in many areas, including many-body quantum physics and signal processing; e.g., \cite{TwarequeEtAl95}.

Returning to the weighted Fock space $\mathfrak F = F_w(\mathfrak H)$ (and reusing notation for creation/annihilation operators), boundedness of the coproduct $\Delta$ implies that symmetric tensor multiplication by $u \in \mathfrak H$ defines a bounded creation operator $a_{u}^+ \colon \mathfrak F \to \mathfrak F$, analogously to~\eqref{eq:creation_op}.
We define the corresponding annihilation operator $a_{u}$ as the adjoint of $a_{u}^+$ in $B(\mathfrak F)$.
Explicitly, we have
\begin{equation*}
	a_{u} (v_1 \vee \cdots \vee v_n) = \frac{w(n)^2}{nw(n-1)^2} \sum_{i=1}^n \langle u, v_i\rangle_{\mathfrak H} v_1 \vee \cdots \vee v_{i-1} \vee v_{i+1} \vee \cdots \vee v_n, \quad a_u 1_{\mathbb C} = 0,
\end{equation*}
where $v_1, \ldots, v_n \in \mathfrak H$.
Note that $a_{u}^* = a_{u}^+$ so that the creation and annihilation operators on the weighted Fock space $\mathfrak F$ form adjoint pairs, as opposed to formal adjoint pairs in the unweighted case.

We can now define a sequence of $*$-algebras, utilizing the annihilation operators on $\mathfrak F$:
\begin{equation*}
	\mathcal A_n = \alg^*\left( \{\Id_\mathfrak F\} \cup \left\{a_{E_nf}: f \in \mathcal S\right\}\right) \subset B(\mathfrak F), \quad n \in \mathbb N.
\end{equation*}
We will also make use of an enveloping $*$-algebra,
$$\mathcal A_\infty = \alg^*\left(\{\Id_\mathfrak F\} \cup \left\{ a_u : u \in \mathfrak H_0 \right\} \right) \subset B(\mathfrak F),$$
and the $C^*$-algebra it generates, $\mathcal A = C^*(\mathcal A_\infty) \subset B(\mathfrak F)$.
Observe that $\mathcal A$ also contains the operators $a_u$ and $a^*_u$ for $u \in \mathfrak H$ since the map $u \mapsto a_u^*$ is continuous from the $\mathfrak H$ norm to the operator norm as $\mathfrak F$ is an RKHA.

To simplify the notation going forward we extend the definitions of the creation and annihilation operators to maps $a_\xi,a^*_\xi \in B(\mathfrak F)$ parameterized by elements $\xi \in \mathfrak F$.
That is, $a_\xi^* \eta = \xi \vee \eta$, and this uniquely determines $a_\xi$.
Note the relations
\begin{gather*}
	a_{\eta \vee \xi}^* = a_{\eta}^* a_{\xi}^*, \quad a_{\eta \vee \xi} = a_{\eta} a_{\xi}, \\
	a^*_{\eta+\xi} = a_\eta^* + a_\xi^*, \quad a_{\eta+\xi} = a_\eta + a_\xi,
\end{gather*}
where $\eta, \xi \in \mathfrak F$.

\begin{thm}
	\label{thm:spectral_triple}
	The triple  $(\mathcal A_n, \mathfrak F, -i \overline{\mathcal D})$ is a weak spectral triple.
\end{thm}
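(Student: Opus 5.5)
The three requirements of \cref{defn:spec_triple} will be checked in turn. First, $-i\overline{\mathcal D}$ is self-adjoint because $\mathcal D$ is essentially skew-adjoint on the tensor algebra $T(\mathfrak H_0)$, as noted above. Second, $\mathcal A_n$ is a unital $*$-subalgebra of $B(\mathfrak F)$ by construction: it is generated by $\Id_{\mathfrak F}$ together with the bounded operators $a_{E_n f}$ and their adjoints $a^*_{E_n f}$, $f \in \mathcal S$. The remaining task is to show that for every $a \in \mathcal A_n$ the commutator $[\overline{\mathcal D}, a]$ is densely defined and extends to a bounded operator.

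Since $\adj_{\overline{\mathcal D}}$ obeys the Leibniz rule $[\mathcal D, ab] = [\mathcal D, a]b + a[\mathcal D, b]$, and sums and products of bounded operators are bounded, it suffices to treat the generators $a^*_u$ and $a_u$ with $u = E_n f \in \mathfrak H_0$. The identity has zero commutator. The common dense domain will be $T(\mathfrak H_0)$. Every generator maps this space into itself: $a^*_u$ acts by $\vee$-multiplication by $u \in \mathfrak H_0$, and $a_u$ contracts against $u$, which preserves $\mathfrak H_0$-valued tensors. Hence products of generators also preserve $T(\mathfrak H_0)$, and all commutators are defined there.

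For the creation operator, we use that $\mathcal D$ is a derivation for $\vee$ on $T(\mathfrak H_0)$. This follows from $\mathcal D|_{\mathfrak H^{\vee n}} = \Id^{\vee n-1}\vee L$ acting as $\sum_i v_1\vee\cdots\vee Lv_i\vee\cdots\vee v_n$. It gives
\[
[\mathcal D, a^*_u]\eta = \mathcal D(u\vee\eta) - u\vee\mathcal D\eta = (Lu)\vee\eta = a^*_{Lu}\eta .
\]
Since $u \in \mathfrak H_0$, which is invariant under $L$, we have $Lu \in \mathfrak H_0 \subset \mathfrak H$. Boundedness of the coproduct then makes $a^*_{Lu}$ bounded, with $\lVert a^*_{Lu}\rVert \le C\lVert Lu\rVert_{\mathfrak H}$.

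For the annihilation operator, we take adjoints using antisymmetry of $\mathcal D$ on $T(\mathfrak H_0)$. For $\eta,\xi \in T(\mathfrak H_0)$,
\[
\langle \eta, [\mathcal D, a_u]\xi\rangle = \langle [\mathcal D, a^*_u]\eta, \xi\rangle = \langle a^*_{Lu}\eta,\xi\rangle = \langle \eta, a_{Lu}\xi\rangle ,
\]
so $[\mathcal D, a_u] = a_{Lu}$ on $T(\mathfrak H_0)$, and this operator is bounded. Alternatively, one can verify this directly from the explicit weighted formula for $a_u$: the factor $w(n)^2/(n w(n-1)^2)$ depends only on the degree, which $\mathcal D$ preserves, and $L$ is antisymmetric on $\mathfrak H_0$.

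The main obstacle is passing from $\mathcal D$ to its closure $\overline{\mathcal D}$. We must check that $[\overline{\mathcal D}, a]$, defined on $\{\eta \in \dom\overline{\mathcal D} : a\eta \in \dom\overline{\mathcal D}\}$, is densely defined and agrees with the bounded extension found above. Density is immediate, because this domain contains $T(\mathfrak H_0)$, which each $a \in \mathcal A_n$ preserves. The bounded extension of $[\overline{\mathcal D}, a]|_{T(\mathfrak H_0)}$ is what \cref{defn:spec_triple} requires. If the argument needs to be tightened, a core argument gives the full statement. Take $\eta \in \dom\overline{\mathcal D}$ and $\eta_k \to \eta$ in the graph norm with $\eta_k \in T(\mathfrak H_0)$. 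Then $a\eta_k \to a\eta$, and $\mathcal D a\eta_k = a\mathcal D\eta_k + [\mathcal D,a]\eta_k$ converges. Closedness of $\overline{\mathcal D}$ then gives $a\eta \in \dom\overline{\mathcal D}$ and $[\overline{\mathcal D},a]\eta = \lim_k [\mathcal D,a]\eta_k$, which is the bounded extension.
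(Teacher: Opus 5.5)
Your proposal is correct and follows the paper's proof. Both arguments use the derivation property of $\mathcal D$ on $T(\mathfrak H_0)$ to get $[\mathcal D, a^*_u]=a^*_{Lu}$ and $[\mathcal D,a_u]=a_{Lu}$, use boundedness of creation and annihilation operators on $\mathfrak F$, and apply the Leibniz rule for general elements of $\mathcal A_n$. Your closure argument, which shows that each $a\in\mathcal A_n$ preserves $\dom\overline{\mathcal D}$ and that $[\overline{\mathcal D},a]$ is the bounded extension, fills in a step the paper leaves implicit, since the paper only concludes boundedness of $[\mathcal D,a]$ on $T(\mathfrak H_0)$.
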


\begin{proof}
	By construction, $\mathcal D$ is a derivation on $T(\mathfrak H_0)$,  and so,
	$$[\mathcal D, a^*_{F(E_n)\tilde{f}}]=a^*_{\mathcal D(F(E_n)\tilde{f})}, \quad [\mathcal D, a_{F(E_n)\tilde{f}}]=a_{\mathcal D(F(E_n)\tilde{f})}, \quad \forall \tilde f \in T(\mathcal S).$$
	Boundedness of the commutators above then follows from boundedness of creation and annihilation operators on the weighted Fock space $\mathfrak F$.
	For a generic element $a \in \mathcal A_n$, we can apply the Leibniz rule for commutators to conclude boundedness of $[\mathcal D,a]$ as an operator from $T(\mathfrak H_0)$ to $\mathfrak F$.
\end{proof}

Not only is $[\mathcal D,a]$ bounded for $a \in \mathcal A_n$, but $[\mathcal D,\cdot] \colon \mathcal A_\infty \to \mathcal A_\infty$ is well-defined since $[\mathcal D,\cdot]$ is a derivation and\
$$[\mathcal D, a^*_u]=a^*_{Lu} \in \mathcal A_\infty, \quad [\mathcal D, a_u]=a_{Lu} \in \mathcal A_\infty,$$
for all $u \in \mathfrak H_0$ since $\mathfrak H_0$ is invariant under $L$.
Moreover, for every element $a \in \mathcal A$, there is a time evolution
\begin{equation*}
	a(t) = \Adj_{e^{t \overline{\mathcal D}}}a := e^{t\overline{\mathcal D}} a e^{-t\overline{\mathcal D}} \in \mathcal A,
\end{equation*}
This follows from the identities
$$\Adj_{e^{\overline{\mathcal D} t}}(a_u^*) = a_{e^{\overline{L}t}u}^*, \quad \Adj_{e^{\overline{\mathcal D} t}}(a_u) = a_{e^{\overline{L}t}u}, \quad u \in \mathfrak H,$$
and density of the $*$-algebra $\alg^*\left(\{\Id_\mathfrak F\} \cup \left\{ a_u : u \in \mathfrak H \right\} \right)$ inside $\mathcal A$, which is preserved by the isometric action of $\Adj_{e^{\overline{\mathcal D} t}}$.

To see how this unitary evolution is related to the original dynamics, in the next section we will embed the state space manifold $X$ into the state spaces of the $C^*$-algebras $C^*(\mathcal A_n)$.

\section{State space embedding and the quantum pseudometric}
\label{sec:state_space_embedding_and_metric}

We will continue to work with the finitely-generated RKHA $\mathcal H$, weighted Fock space $\mathfrak F = F_w(\mathfrak H) = F_w(\mathcal H \otimes \ell^2(\mathbb N_0))$, $C^*$-algebra $\mathcal A \subset B(\mathfrak F)$, and $*$-algebras $\mathcal A_n \subset \mathcal A_\infty \subset \mathcal A$ from \cref{sec:spectral_triples}.

Recall that since $\mathcal H$ is an RKHA, the norm of the kernel sections $k_x$ is uniformly bounded by the operator norm of the coproduct $\Delta$ (see \eqref{eq:rkha_kernel_bound}).
As a result, without loss of generality, we may rescale the inner product of $\mathcal H$ by $\lVert \Delta \rVert$ so that $\lVert k_x \rVert_\mathcal H \leq 1$.
Such a rescaling guarantees absolute convergence of
$$\mathcal K_x = \sum_{n=0}^\infty w(n)^{-2} (e_1 \otimes k_x)^{\vee n} \in \mathfrak F, \quad  x\in X.$$
One can further show \cite{GiannakisEtAl25} that
\begin{equation*}
	\langle \mathcal K_x,  \eta \vee \xi \rangle_{\mathfrak F} = \langle \mathcal K_x,  \eta \rangle_{\mathfrak F} \langle \mathcal K_x, \xi \rangle_{\mathfrak F}, \quad \forall \eta, \xi  \in \mathfrak F.
\end{equation*}
This means that $\chi_x = \langle \mathcal K_x, \cdot \rangle_{\mathfrak F} \in \mathfrak F^*$ defines a multiplicative linear functional that lies in the character spectrum of the abelian Banach algebra $\mathfrak F$.
Since $\mathfrak F$ is additionally a coalgebra with respect to $\Delta$ from~\eqref{eq:fock_space_coproduct_adj}, the vectors $\mathcal K_x$ are elements of the cospectrum of $\mathfrak F$; that is, the set of nonzero elements $\xi \in \mathfrak F$ satisfying $\Delta \xi = \xi \otimes \xi$ (see \cref{app:spec_cospec}).
We thus see that $\mathcal K \colon X \to \mathfrak F$, $\mathcal K(x) = K_x$, defines a cospectrum-valued feature map on the state space manifold.

Another property of the feature vectors $\mathcal K_x$, which follows by direct verification, is their relationship to $\mathfrak m$,
\begin{gather*}
	\langle \mathcal K_x , F(E_n)\tilde{f}\rangle_\mathfrak F = \langle k_x , \mathfrak m \tilde{f} \rangle_\mathcal H, \quad  \forall \tilde{f} \in T(\mathcal H), \\
	\langle \mathcal K_x , \xi\rangle_\mathfrak F = \langle k_x , \mathfrak m(F(p_1 \otimes \Id) \xi)\rangle_\mathcal H, \quad  \forall \xi \in T(\mathfrak H).
\end{gather*}
From these properties and multiplicativity of $\chi_x$ it follows that the $\mathcal K_x$ are coherent states for all annihilation operators associated with the tensor algebra of $\mathfrak H$; that is,
\begin{equation}
	\label{eq:coherent_states}
	\begin{gathered}
		a_{F(E_n)\tilde{f}} \mathcal K_x = \overline{(\mathfrak m \tilde{f})(x)} \mathcal K_x, \quad \forall\tilde f \in T(\mathcal H),\\
		a_{\xi} \mathcal K_x = \overline{\mathfrak m (F(p_1 \otimes \Id)\xi)(x)} \mathcal K_x, \quad \forall \xi \in T(\mathfrak H).
	\end{gathered}
\end{equation}
The $\mathcal K_x$ also provide a map $\tilde \Psi \colon \mathcal A \to \mathcal H$ back to the RKHA that can intertwine $[\mathcal D, \cdot]$ with $V$ for certain elements of $\mathcal A_\infty$.
Specifically, defining $(\tilde \Psi a)(x) = \langle \mathcal K_x, a 1_{\mathbb C} \rangle_{\mathfrak F}$, we get
\begin{equation*}
	\widehat{\mathcal K} [\mathcal D, a_{F(E_n) \tilde f}^*] = V\widehat{\mathcal K}(a^*_{F(E_n)\tilde{f}}) = V \mathfrak m \tilde f,
\end{equation*}
for all $\tilde f \in T(\mathfrak H_0)$ and $n \geq 2$.

Next, consider the vector states $\varphi_x \in S(B(\mathfrak F))$, defined for every $x \in X$ as
\begin{equation}
	\varphi_x(a) = \frac{\langle \mathcal K_x , a \mathcal K_x\rangle_{\mathfrak F}}{\lVert \mathcal K_x \rVert^2_\mathfrak{F}}.
\end{equation}
These states may be equivalently defined as $\varphi_x(a) = \tr(\rho_x a)$,
where $\rho_x \in B_1(\mathfrak F)$ is the rank-1 density operator
\begin{equation*}
	\rho_x = \frac{\langle \mathcal K_x, \cdot \rangle_{\mathfrak F} \mathcal K_x}{\lVert \mathcal K_x \rVert^2_\mathfrak{F}}.
\end{equation*}
Since $\mathcal A_n \subset B(\mathfrak F)$ for all $n \in \mathbb N$, these are also states of $\mathcal A_n$.
The following theorem is an analog of \cref{thm:tensor_koopman}, using expectations of elements of $\mathcal A_n$ with respect to the quantum states $\varphi_x$ to recover the underlying classical Koopman evolution.

\begin{thm}\label{thm:state_evolution_convergence}
	Let $\tilde{f} \in T(\mathcal S)$ be a lift of $f \in \mathcal H$ (i.e., $\mathfrak m\tilde{f} = f$).
	Then
	\begin{equation*}
		\varphi_x\left(a^*_{F(E_n)\tilde{f}}\right) = f(x),
	\end{equation*}
	and for $|t| \leq R/3$,
	$$\lim_{n\to \infty} \varphi_x \left(a^*_{F(E_n)\tilde{f}}(t) \right)=\lim_{n\to \infty} \varphi_x \left(\Adj_{e^{t\overline{\mathcal D}}} a^*_{F(E_n)\tilde{f}} \right) = (U_tf)(x),$$
	where $R$ is the joint radius of analyticity for every function in $\mathcal S$ with respect to $V$ and $V^*$.
	Furthermore, the convergence is uniform over $x \in X$.
\end{thm}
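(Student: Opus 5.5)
The argument rests on the coherent state property~\eqref{eq:coherent_states} of the feature vectors $\mathcal K_x$. Since $\varphi_x(a) = \langle \mathcal K_x, a \mathcal K_x\rangle_{\mathfrak F}/\lVert \mathcal K_x\rVert^2_{\mathfrak F}$, we can move the creation operator to the left as its adjoint:
\begin{equation*}
\langle \mathcal K_x, a^*_\xi \mathcal K_x\rangle_{\mathfrak F} = \langle a_\xi \mathcal K_x, \mathcal K_x\rangle_{\mathfrak F}.
\end{equation*}
By~\eqref{eq:coherent_states}, $a_\xi \mathcal K_x = \overline{\mathfrak m(F(p_1\otimes\Id)\xi)(x)}\,\mathcal K_x$. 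Conjugate-linearity of the inner product in its first argument then gives
\begin{equation*}
\varphi_x(a^*_\xi) = \mathfrak m(F(p_1\otimes\Id)\xi)(x) = \langle \mathcal K_x, \xi\rangle_{\mathfrak F}
\end{equation*}
for every $\xi \in T(\mathfrak H)$. With $\xi = F(E_n)\tilde f$ we have $F(p_1\otimes\Id)F(E_n) = F((p_1\otimes\Id)E_n) = F(\Id)$, because $F_1 = 1$. The first claim follows: $\varphi_x(a^*_{F(E_n)\tilde f}) = (\mathfrak m\tilde f)(x) = f(x)$.

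For the time-evolved statement, I would use the identity $\Adj_{e^{t\overline{\mathcal D}}}(a^*_u) = a^*_{e^{t\overline L}u}$ stated after \cref{thm:spectral_triple}. The first step is to extend it from $u\in\mathfrak H$ to $\xi \in T(\mathfrak H)$, giving $\Adj_{e^{t\overline{\mathcal D}}} a^*_\xi = a^*_{e^{t\overline{\mathcal D}}\xi}$. This extension follows from multiplicativity, $a^*_{\eta\vee\zeta} = a^*_\eta a^*_\zeta$, together with $e^{t\overline{\mathcal D}} = F(e^{t\overline L})$ acting as an algebra automorphism for $\vee$ on $T(\mathfrak H)$ and continuously on $\mathfrak F$. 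This yields $\varphi_x(a^*_{F(E_n)\tilde f}(t)) = \langle \mathcal K_x, e^{t\overline{\mathcal D}}F(E_n)\tilde f\rangle_{\mathfrak F}$.

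The formula $\varphi_x(a^*_\xi) = \langle \mathcal K_x,\xi\rangle_{\mathfrak F}$ is continuous in $\xi\in\mathfrak F$, and $T(\mathfrak H)$ is dense. Also, $e^{t\overline{\mathcal D}}F(E_n)\tilde f$ lies in $\bigoplus_{k\le l}\mathfrak H^{\vee k}$, which is closed, and on it $F(p_1\otimes\Id)$ is bounded. Passing to the limit from $T(\mathfrak H)$ therefore gives
\begin{equation*}
\varphi_x\left(a^*_{F(E_n)\tilde f}(t)\right) = \left\langle k_x,\ \mathfrak m\left(F(p_1\otimes\Id)\,e^{t\overline{\mathcal D}}F(E_n)\tilde f\right)\right\rangle_{\mathcal H}.
\end{equation*}
Justifying this passage to the limit, that is, checking that the second identity relating $\mathcal K_x$ and $\mathfrak m$ extends to finite-degree elements of the closure, is the step I expect to need the most care. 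It should be routine, because on each fixed degree everything is bounded.

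Finally, \cref{thm:tensor_koopman} gives $\mathfrak m(F(p_1\otimes\Id)e^{\overline{\mathcal D}t}F(E_n)\tilde f) \to U_t\mathfrak m\tilde f = U_t f$ in $\mathcal H$ for $|t|\le R/3$. Pointwise evaluation is bounded by~\eqref{eq:rkha_boundedness}, with $\lVert k_x\rVert_{\mathcal H}\le\lVert\Delta\rVert$ (or $\le 1$ after the rescaling). So the difference at $x$ is at most $\lVert\Delta\rVert$ times the $\mathcal H$-norm error, which is independent of $x$. This gives both the pointwise limit $(U_tf)(x)$ and uniform convergence over $X$.
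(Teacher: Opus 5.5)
Your proposal is correct and follows essentially the paper's route. The paper defers to the proof of \cref{thm:spectral_triple_uniform_convergence}, which likewise moves the creation operator across the inner product and uses the annihilation relations to reduce the expectation to $\mathfrak m\bigl(F(p_1\otimes\Id)F(e^{Lt})F(E_n)\tilde f\bigr)(x)$; it then applies \cref{thm:tensor_koopman} together with the $x$-independent bound on $\lVert k_x\rVert_{\mathcal H}$. For $\varphi_x$ your version is slightly cleaner, since $\mathcal K_x$ is a genuine eigenvector of $a_\xi$, and the limit passage you flag is harmless because on each fixed degree all the maps involved are bounded.
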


\begin{proof}
	The claim can be proved using a nearly identical approach to the proof of \cref{thm:spectral_triple_uniform_convergence} below.
	We omit an explicit presentation in the interest of brevity.
\end{proof}

Note that the creation operators $a^*_{F(E_n)\tilde f}$ are not self-adjoint.
If we wish to work with self-adjoint quantum observables, then for real valued-functions $f = \mathfrak m \tilde{f}$, the observable $O_{\tilde{f},n} = (a^*_{F(E_n)\tilde{f}} +a_{F(E_n)\tilde{f}})/2$ also satisfies
$$\lim_{n \to \infty} \varphi_x\left( O_{\tilde{f},n}(t)\right) = \lim_{n \to \infty} \tr\left( (\Adj_{e^{-t\overline{\mathcal D}}} \rho_x) O_{\tilde{f},n} \right) = (U_tf)(x),$$
for $\lvert t \rvert < R/3$, uniformly over $x \in X$.

\subsection{Algebraically consistent embedding}

While \cref{thm:state_evolution_convergence} accomplishes several goals for this work, the vector states $\varphi_x$ do not define $*$-homomorphisms on $C^*(\mathcal A_n)$ and so the behavior above does not extend to the entire $C^*$-algebra.
For example, even though $\varphi_x(a^*_{F(E_n)\tilde{f}}) = f(x)$ and $\varphi_x(a_{F(E_n)\tilde{f}}) = \overline{f(x)}$, in general we cannot expect  $\varphi_x(a_{F(E_n)\tilde{f}}a^*_{F(E_n)\tilde{f}})$  to be equal to $\lvert f(x) \rvert^2$ since $\varphi_x$ is not a $*$-homomorphism.
For this reason we choose a different way to embed $X$ into the state spaces of $C^*(\mathcal A_n)$.
This alternate method requires a new assumption that is already satisfied by the examples in \cref{sec:poly_examples}.

\begin{asmp}\label{asmp:normalized_kernel}
	The RKHA $\mathcal H$ has unit-normalized kernel sections, $\lVert k_x \rVert_\mathcal H =1$ for all $x \in X$.
\end{asmp}

Consider the following truncations of $\mathcal K_x$,
$$\mathcal K_x^{(m)} = \sum_{n=m}^\infty w(n)^{-2} (e_1 \otimes k_x)^{\vee n} \in \mathfrak F, \quad m \in \mathbb N,$$
and their corresponding vector states $$\varphi_x^m(a) = \frac{\langle \mathcal K_x^{(m)} , a \mathcal K_x^{(m)}\rangle_{\mathfrak F}}{\lVert \mathcal K_x^{(m)} \rVert^2_{\mathfrak F}}.$$
A direct calculation shows,
\begin{equation}
	\label{eq:alt_coherent_states}
	a_{E_n f} \mathcal K_x^{(m)} = \overline{f(x)} \mathcal K_x^{(m-1)} \quad \quad a_{u} \mathcal K_x^{(m)} = \overline{\mathfrak m (F(p_1 \otimes \Id)u)(x)} \mathcal K_x^{(m-1)},
\end{equation}
for $f \in \mathcal H$ and $ u \in \mathfrak H$.
Thus, even though $\mathcal K_x^{(m)}$ are not coherent states (cf.~\eqref{eq:coherent_states}), they still exhibit non-trivial relationships with the annihilation operators.

The state space embedding in this subsection requires a careful analysis of $\mathcal A_\infty$.
As motivation we give the definition before its justification.
\begin{defn}\label{def:state_space_embedding}
	Under \cref{asmp:normalized_kernel} and using the subexponential weights in~\eqref{eq:subexp_fock_space_weights}, define $\Psi \colon X \to S(\mathcal A)$ by $\Psi(x) = \Psi_x$, where $\Psi_x(a) = \lim_{m \to \infty} \varphi_x^m(a)$ for $a \in \mathcal A$,
	making $\Psi_x$ the weak$^*$ limit of $\varphi_x^m$ inside $S(\mathcal A)$.
\end{defn}

To justify this definition we first build a standard form for elements of $\mathcal A_\infty$.
Define the weights
\begin{equation*}
	q(n) = \frac{(n+1)w^2(n)}{w^2(n+1)}
\end{equation*}
and the diagonal operators $M_h \in B(\mathfrak F)$, where
$$M_h= \bigoplus_{n=0}^\infty h(n) \Id_{\mathfrak H^{\vee n}}, \quad h \in l^\infty(\mathbb N_0).$$
We will also need the left and right shift operators on $l^\infty(\mathbb N_0)$, $S(h)(n) = h(n+1)$ and $S^{-1}(h)(n) = h(n-1)$ for $n\geq1$ and $S^{-1}(h)(0)=0$.
Then, the creation operators, annihilation operators, and diagonal operators satisfy the weighted commutation relations
\begin{subequations}
	\label{eq:wccr}
	\begin{gather}
		M_q a_\xi a_\eta^* - M_{S^{-1}(q)} a_\eta^* a_\xi = \langle \xi, \eta \rangle_{\mathfrak F} \Id,\\
		[a_\xi,a_\eta] = 0,\\
		a_\xi^* M_h = M_{S^{-1}(h)} a_\xi^*,\\
		a_\xi M_h = M_{S(h)} a_\xi,\\
		\label{eq:wccr5}a_\xi a_\eta^* = a_\eta^* M_{\frac{q}{S(q)}} a_\xi +\langle \xi, \eta \rangle_{\mathfrak F} M_{\frac{1}{q}}.
	\end{gather}
\end{subequations}
In particular, if
\begin{equation*}
	\lim_{n \to \infty} \frac{w(n)}{w(n+1)} = r
\end{equation*}
with $r$ strictly positive and finite, then $\lim_{n \to \infty} \frac{q(n)}{S(q)(n)} = 1$ and $\lim_{n \to \infty} \frac{1}{q(n)} = 0$,
making all terms above bounded operators.
This is also satisfied for the subexponential weights in~\eqref{eq:subexp_fock_space_weights}.
Note that $M_{1/q}$ need not belong to $\mathcal A_\infty$, or even its $C^*$-algebra closure.

\begin{defn}
	An element of $\mathcal A_\infty$ is in standard form if it is written as a linear combination of monomials of the form
	$$a^*_{u_1} \cdots a_{u_k}^* M_h a_{v_1} \cdots a_{v_l},$$
	for some $u_i, v_i \in \mathfrak H_0$ and $ h \in \ell^\infty(\mathbb N_0)$.
	We define the degree of such monomials as
	$$\degr(a^*_{u_1} \cdots a_{u_k}^* M_h a_{v_1} \cdots a_{v_l}) = k-l.$$
\end{defn}

With these definitions, we have:

\begin{prop}
	The $*$-algebra $\mathcal A_\infty$ is a $*$-subalgebra of the $\mathbb Z$-graded algebra
	$$\mathcal G_\infty = \bigcup_{d \in \mathbb Z} \mathcal G_d \subset B(\mathfrak F),$$
	where $\mathcal G_d$ are subspaces of $B(\mathfrak F)$ spanned by degree-$d$ monomials,
	$$\mathcal G_d = \spn \left\{a^*_{u_1} \cdots a_{u_k}^* M_h a_{v_1} \cdots a_{v_l} :\; \text{$u_i$, $v_i \in \mathfrak H_0$, $d = k-l$, $h \in l^\infty(\mathbb N_0)$} \right\}.$$
	Furthermore,
	$$\mathcal G_\infty \cong \bigoplus_{d\in \mathbb Z} \mathcal G_d$$
	and $\mathcal G_{d_1} \cdot \mathcal G_{d_2} \subset \mathcal G_{d_1+d_2}$.
\end{prop}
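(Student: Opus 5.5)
The plan has three parts. First show that the degree-graded spaces multiply correctly and that $\mathcal G_\infty$ is a $*$-algebra. Then show that $\mathcal A_\infty \subseteq \mathcal G_\infty$. Finally show that the sum $\sum_d \mathcal G_d$ is direct, by looking at how each $\mathcal G_d$ shifts the $\mathbb N_0$-grading of the Fock space.

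\emph{Closure under adjoints.} Using $a_u^* = (a_u)^*$ and $M_h^* = M_{\bar h}$, the adjoint of $a^*_{u_1}\cdots a^*_{u_k} M_h a_{v_1}\cdots a_{v_l}$ is $a^*_{v_l}\cdots a^*_{v_1} M_{\bar h} a_{u_k}\cdots a_{u_1}$. This is again a monomial in standard form, now of degree $l-k$. Hence $\mathcal G_d^* = \mathcal G_{-d}$, and $\mathcal G_\infty$ is closed under $*$.

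\emph{Closure under products.} This is the main step. Multiplying $a^*_{u_1}\cdots a^*_{u_k} M_h a_{v_1}\cdots a_{v_l}$ by $a^*_{u'_1}\cdots a^*_{u'_{k'}} M_{h'} a_{v'_1}\cdots a_{v'_{l'}}$ gives a word whose middle block $a_{v_1}\cdots a_{v_l}\, a^*_{u'_1}\cdots a^*_{u'_{k'}}$ has annihilators to the left of creators. I would normal-order it by induction on $l+k'$, using the relations \eqref{eq:wccr}:
\begin{itemize}
\item \eqref{eq:wccr5} replaces each $a_\xi a^*_\eta$ by $a^*_\eta M_{q/S(q)} a_\xi + \langle \xi,\eta\rangle M_{1/q}$;
\item $a^*_\xi M_h = M_{S^{-1}h}\, a^*_\xi$ and $a_\xi M_h = M_{Sh}\, a_\xi$ move the diagonal operators through creators and annihilators;
\item $M_h M_{h'} = M_{hh'}$ merges the diagonal factors;
\item $[a_\xi,a_\eta]=0$, together with its adjoint form $[a^*_\xi,a^*_\eta]=0$, reorders factors of the same type as needed.
\end{itemize}
Each use of \eqref{eq:wccr5} either moves one creator past one annihilator, leaving the counts unchanged, or removes one of each. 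Either way the degree $k-l$ is preserved, and the induction terminates with a sum of standard-form monomials of total degree $d_1+d_2$.

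For this to stay inside $\mathcal G_\infty$, every resulting multiplier must be bounded, i.e. $q/S(q)$, $1/q$ and all their shifts must lie in $\ell^\infty(\mathbb N_0)$. For the subexponential weights $w(n)=e^{\tau n^p}$ we have $w(n)/w(n+1)\to 1$. Hence $1/q\to 0$ and $q/S(q)\to 1$, and both sequences are finite at every $n$. The inner products $\langle \xi,\eta\rangle$ produced by \eqref{eq:wccr5} are scalars. Altogether this gives $\mathcal G_{d_1}\mathcal G_{d_2}\subset \mathcal G_{d_1+d_2}$.

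\emph{Containment $\mathcal A_\infty\subseteq\mathcal G_\infty$.} The identity is $M_1$, of degree $0$. Each generator $a_u$ is $M_1 a_u$, of degree $-1$, and its adjoint $a_u^*$ has degree $+1$. Since $\mathcal G_\infty$ is a $*$-algebra, the $*$-algebra generated by these elements lies in it.

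\emph{Directness of the sum.} I expect this to be the main obstacle, since it is where one must rule out cancellation between different degrees. Let $P_n$ be the projection onto $\mathfrak H^{\vee n}$. The operator $a_v$ maps $\mathfrak H^{\vee n}$ into $\mathfrak H^{\vee(n-1)}$ (and the vacuum to $0$), $a_u^*$ maps $\mathfrak H^{\vee n}$ into $\mathfrak H^{\vee(n+1)}$, and $M_h$ preserves each level. So every $g\in\mathcal G_d$ satisfies $P_m g P_n = 0$ unless $m=n+d$. Now suppose $\sum_d g_d = 0$ with $g_d\in\mathcal G_d$ and only finitely many nonzero. Compressing by $P_{n+d}(\cdot)P_n$ isolates the single term $P_{n+d}\, g_d P_n$, so it vanishes for every $n$. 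Because $\sum_n P_n = \Id$ strongly, each $g_d$ is $0$. This gives $\mathcal G_\infty\cong\bigoplus_d \mathcal G_d$.
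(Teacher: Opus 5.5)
Your proposal is correct and takes essentially the same route as the paper. Normal-ordering via the weighted commutation relations (chiefly \eqref{eq:wccr5}) gives $\mathcal G_{d_1}\mathcal G_{d_2}\subset\mathcal G_{d_1+d_2}$ and $\mathcal A_\infty\subseteq\mathcal G_\infty$, and the fact that each $\mathcal G_d$ shifts the Fock-space levels by $d$ gives directness; your compression argument with $P_{n+d}(\cdot)P_n$ and your explicit checks of $*$-closure and of boundedness of $q/S(q)$ and $1/q$ make rigorous steps the paper states only informally.
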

\begin{proof}
	A word in terms of $a_{u_i}^*$ and $a_{v_j}$ can be rearranged into a sum of monomials in standard form due to the commutator relations~\eqref{eq:wccr}.
	Well-definedness of the grading of $\mathcal G_\infty$ then comes from the action on the grading of the Fock space.
	Indeed, suppose that $h(m) \neq 0$.
	Then, the monomial
	$$a^*_{u_1} \cdots a_{u_k}^* M_h a_{v_1} \cdots a_{v_l}$$
	is a nonzero map from $\mathfrak H^{\vee (m+l)} \to \mathfrak H^{\vee (m+k)}$.
	This monomial cannot belong to any other graded component or sums of operators from other graded components since all other monomials outside of $\mathcal G_d$ are zero as maps from $\mathfrak H^{\vee (m+l)} \to \mathfrak H^{\vee (m+k)}$ for all $m \in \mathbb N$.
	Linear independence of these spaces implies that $\mathcal G_\infty \cong \bigoplus_{d \in \mathbb Z} \mathcal G_d$ as vector spaces.

	Finally, the commutation relation \eqref{eq:wccr5} implies that starting with a product of $k$ creation and $l$ annihilation operators, and applying \eqref{eq:wccr5} yields a sum of monomials whose degrees are all $k-l$.  Thus, $\mathcal G_{d_1} \cdot \mathcal G_{d_2} \subset \mathcal G_{d_1+d_2}$ since the degree is the difference between the number of creation operators and annihilation operators.
\end{proof}

\begin{prop}\label{prop:state_embedding}
	Suppose that the RKHA $\mathcal H$ satisfies \cref{asmp:main,asmp:alg,asmp:normalized_kernel} and the weight $w$ of the Fock space is of the subexponential type in~\eqref{eq:subexp_fock_space_weights}.
	Let $h \in c(\mathbb N_0)$ be a real-valued convergent sequence with limit $ h_\infty = \lim_{n\to\infty} h(n)$.
	Then, for every $\tilde{f},\tilde{g} \in T(\mathfrak H)$ we have $$\lim_{n \to \infty} \varphi_x^n(a_{\tilde{f}}^* M_h a_{\tilde{g}}) = h_\infty \mathfrak m( F(p_1 \otimes \Id) \tilde{f})(x) \overline{\mathfrak m( F(p_1 \otimes \Id) \tilde{g})(x)}.$$
\end{prop}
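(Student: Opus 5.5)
We will reduce to the case where $\tilde f,\tilde g$ are homogeneous elementary symmetric tensors, since both sides are sesquilinear in $(\tilde f,\tilde g)$ and $T(\mathfrak H)$ is spanned by such tensors. So take $\tilde f = u_1\vee\cdots\vee u_k$ and $\tilde g = v_1\vee\cdots\vee v_l$ with $u_i,v_j\in\mathfrak H$. Then $a^*_{\tilde f} = a^*_{u_1}\cdots a^*_{u_k}$ and $a_{\tilde g}=a_{v_1}\cdots a_{v_l}$. Write $F_i(x) := \mathfrak m(F(p_1\otimes\Id)u_i)(x) = ((p_1\otimes \Id)u_i)(x)$, and analogously $G_j(x)$. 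By multiplicativity of $\mathfrak m\circ F(p_1\otimes\Id)$, the right-hand side of the claim is $h_\infty \prod_i F_i(x)\prod_j \overline{G_j(x)}$.

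The first step is the computation
\[
\langle \mathcal K_x^{(m)}, a^*_{\tilde f} M_h a_{\tilde g}\mathcal K_x^{(m)}\rangle_{\mathfrak F} = \langle a_{\tilde f}\mathcal K_x^{(m)}, M_h a_{\tilde g}\mathcal K_x^{(m)}\rangle_{\mathfrak F}.
\]
We iterate \eqref{eq:alt_coherent_states} to obtain $a_{\tilde g}\mathcal K_x^{(m)} = \prod_j\overline{G_j(x)}\,\mathcal K_x^{(m-l)}$ and $a_{\tilde f}\mathcal K_x^{(m)} = \prod_i\overline{F_i(x)}\,\mathcal K_x^{(m-k)}$. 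Before using these, we will check the normalization in \eqref{eq:alt_coherent_states} directly from the weighted annihilation formula. Acting on the $n$-th summand $w(n)^{-2}(e_1\otimes k_x)^{\vee n}$, $a_u$ yields $w(n)^{-2}\frac{w(n)^2}{n w(n-1)^2}\, n\langle u,e_1\otimes k_x\rangle (e_1\otimes k_x)^{\vee(n-1)}$, which equals $\overline{F(x)}\,w(n-1)^{-2}(e_1\otimes k_x)^{\vee (n-1)}$. The identity therefore holds exactly, with the shifted truncation.

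The problem thus reduces to the scalar quantity
\[
\frac{\langle \mathcal K_x^{(m-k)}, M_h \mathcal K_x^{(m-l)}\rangle}{\lVert\mathcal K_x^{(m)}\rVert^2}.
\]
Since $M_h$ is diagonal in the grading, the numerator equals $\sum_{n\ge m-\min(k,l)} h(n)\,w(n)^{-4}\,w(n)^2\lVert (e_1\otimes k_x)^{\vee n}\rVert^2$. By \cref{asmp:normalized_kernel}, $\lVert (e_1\otimes k_x)^{\vee n}\rVert_{\mathfrak H^{\vee n}}=1$, so the numerator is $\sum_{n\ge m'} h(n) w(n)^{-2}$ with $m' = m-\min(k,l)$. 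The denominator is $\sum_{n\ge m} w(n)^{-2}$. Both are independent of $x$, which will also give uniformity in $x$.

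The main obstacle is showing that the ratio $\sum_{n\ge m'}h(n)w(n)^{-2}\big/\sum_{n\ge m}w(n)^{-2}\to h_\infty$. This is where the subexponential weights \eqref{eq:subexp_fock_space_weights} enter. Since $w(n+1)/w(n) = e^{\tau((n+1)^p-n^p)}\to 1$, a fixed finite number of extra leading terms $w(n)^{-2}$, $m'\le n<m$, is negligible compared with the tail $\sum_{n\ge m}w(n)^{-2}$. Indeed, each such term is at most $(1+o(1))\,w(m)^{-2}$, while the tail is at least $\sum_{j=0}^{J}w(m+j)^{-2}\ge (J+1)(1-o(1))\,w(m)^{-2}$ for any fixed $J$ as $m\to\infty$. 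This gives $\sum_{n\ge m'}w(n)^{-2}/\sum_{n\ge m}w(n)^{-2}\to1$. Finally, because $h(n)\to h_\infty$, a Cesàro/Stolz-type argument on tails yields $\sum_{n\ge m'}h(n)w(n)^{-2}/\sum_{n\ge m'}w(n)^{-2}\to h_\infty$. Combining the two limits proves the claim.
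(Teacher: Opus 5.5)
Your proposal is correct. It shares the paper's skeleton: reduce to homogeneous tensors, then use \eqref{eq:alt_coherent_states} together with \cref{asmp:normalized_kernel} to collapse $\varphi_x^m(a^*_{\tilde f}M_h a_{\tilde g})$ to the product $\mathfrak m(F(p_1\otimes\Id)\tilde f)(x)\,\overline{\mathfrak m(F(p_1\otimes\Id)\tilde g)(x)}$ times an $x$-independent ratio of weighted tails. Finally, show that the finitely many extra terms in the numerator are negligible. The paper reduces to powers $f^{\vee m_1}$, $g^{\vee m_2}$ rather than to elementary symmetric tensors, which is an inessential difference.

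Where you genuinely differ is the key tail estimate. The paper bounds the extra terms by $m\,e^{-2\tau(n-m)^p}$ using monotonicity of $w^{-2}$. It bounds the tail $\sum_{k\ge n}w^{-2}(k)$ from below by an integral, estimated via concavity of $x\mapsto x^p$. This yields the explicit bound $\frac{2\tau p m}{n^{1-p}}e^{2\tau(n^p-(n-m)^p)}$. You instead use only $w(n+1)/w(n)\to 1$, compare the extra terms against a window of $J+1$ tail terms, and let $J\to\infty$. Your argument is softer but more general. It shows the proposition holds for any admissible weight with $w(n+1)/w(n)\to 1$, for instance polynomial weights $(1+n)^s$ with $s>1/2$, so it isolates what the subexponential choice is actually used for. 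The paper's route, by contrast, buys an explicit decay rate of order $n^{p-1}$ in the state index for fixed degree.

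Your factorization into $\bigl(\sum_{n\ge m'}h(n)w(n)^{-2}/\sum_{n\ge m'}w(n)^{-2}\bigr)\cdot\bigl(\sum_{n\ge m'}w(n)^{-2}/\sum_{n\ge m}w(n)^{-2}\bigr)$ is also worth noting. The first factor lies within $\sup_{n\ge m'}\lvert h(n)-h_\infty\rvert$ of $h_\infty$, so the split makes rigorous the reduction from a general $h\in c(\mathbb N_0)$ to the unweighted ratio. The paper only sketches that step with an $\varepsilon$ remark.
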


\begin{proof}
	First, note that the symmetric tensor algebra $T(\mathfrak H)$ is generated by the vacuum vector and homogeneous tensors; that is, $T(\mathfrak H) = \spn\{ \{1_{\mathbb C} \} \cup \{ u^{\vee m} : \; \text{$u \in \mathfrak H$, $m \in \mathbb N$} \} \}$.
	See, e.g., the proof of \cite{GiannakisEtAl25}*{Theorem~9} for further details.
	As a result, it is sufficient to prove the claim for $\tilde{f} = f^{\vee m_1}$ and $\tilde{g} = g^{\vee m_2}$.
	Let $m=\text{min}\{m_1,m_2\}$.
	Then, by \eqref{eq:alt_coherent_states},
	\begin{align*}
		\varphi_x^n(a_{\tilde{f}}^* M_h a_{\tilde{g}}) & = \frac{\langle a_{\tilde{f}} \mathcal K_x^{(n)} , M_h a_{\tilde{g}} \mathcal K_x^{(n)}\rangle_{\mathfrak F}}{\lVert \mathcal K_x^{(n)} \rVert^2_{\mathfrak F}}                                                                                              \\
		                                               & =\mathfrak m( F(p_1 \otimes \Id) \tilde{f})(x) \overline{\mathfrak m\left( F(p_1 \otimes \Id) \tilde{g}\right)(x)} \frac{\langle \mathcal K_x^{(n-m_1)} , M_h \mathcal K_x^{(n-m_2)}\rangle_{\mathfrak F}}{\lVert \mathcal K_x^{(n)} \rVert^2_{\mathfrak F}} \\
		                                               & =\mathfrak m( F(p_1 \otimes \Id) \tilde{f})(x) \overline{\mathfrak m\left( F(p_1 \otimes \Id) \tilde{g}\right)(x)} \left( \frac{\sum_{k=n-m}^\infty h(k) w^{-2}(k)}{\sum_{k=n}^\infty w^{-2}(k)} \right).
	\end{align*}
	Note that the absence of $\mathcal K_x^{(m)}$-dependent normalization factors in the final result above is due to \cref{asmp:normalized_kernel}.

	It remains to show that
	$$\lim_{n \to \infty} \frac{\sum_{k=n-m}^\infty h(k) w^{-2}(k)}{\sum_{k=n}^\infty w^{-2}(k)} = h_\infty.$$
	The subexponential growth of $w(k)$ is critical for this to hold.
	For any $\varepsilon >0$ and $n$ sufficiently large $|h-h(n)|< \varepsilon$ and so we need only prove that
	$$\lim_{n \to \infty} \frac{\sum_{k=n-m}^{n-1} w^{-2}(k)}{\sum_{k=/n}^\infty w^{-2}(k)} = 0.$$

	To that end, since the sequence $w^{-2}(k) = e^{-2\tau k^p}$ is strictly decreasing, the finite sum of $m$ terms in the numerator is bounded from above by $m$ times its largest term,
	\[ \sum_{k = n - m}^{n-1} e^{-2\tau k^p} \le m e^{-2\tau (n-m)^p}. \]
	Moreover, we can bound the infinite sum from below using an integral,
	\[ \sum_{k=n}^\infty e^{-2\tau k^p} \ge \int_{n}^\infty e^{-2\tau x^p}\, dx. \]
	Because the function $f(x) = x^p$ is strictly concave for $p \in (0,1)$, its graph lies below its tangent line at $x=n$, giving
	\[ x^p \le n^p + p n^{p-1}(x-n). \]
	Multiplying by $-2\tau$ and exponentiating gives a lower bound for the integrand,
	\[ e^{-2\tau x^p} \ge e^{-2\tau n^p} e^{-2\tau p n^{p-1}(x-n)}. \]
	Evaluating the integral of this simpler lower bound yields
	\begin{equation*}
		\int_{n}^\infty e^{-2\tau x^p} \, dx \ge \int_{n}^\infty e^{-2\tau n^p} e^{-2\tau p n^{p-1}(x-n)} \, dx
		= \frac{n^{1-p}}{2\tau p} e^{-2\tau n^p}.
	\end{equation*}

	Substituting our bounds into the original ratio gives
	\[ 0 \le \frac{\sum_{k=n-m}^{n-1} w(k)}{\sum_{k=n}^\infty w(k)} \le \frac{m e^{-2\tau (n-m)^p}}{\frac{n^{1-p}}{2\tau p} e^{-2\tau n^p}} = \frac{2\tau p m}{n^{1-p}} e^{2\tau (n^p - (n-m)^p)}. \]
	It is now straightforward to show that
	$$\lim_{n \to \infty} \frac{2\tau p m}{n^{1-p}} e^{2\tau (n^p - (n-m)^p)} =0.$$
\end{proof}

We finally have the tools to justify \cref{def:state_space_embedding}.
\begin{proof}[Proof of well-posedness for \cref{def:state_space_embedding}]
	By the Banach--Alaoglu theorem, $(\varphi_x^{m})_{m=0}^\infty$ has a weak$^*$ cluster point $\Psi_x \in S(\mathcal A)$.
	Pick a convergent subsequence $\varphi_x^{m_i} \to \Psi_x$.
	Moreover, fix an element $a \in \mathcal A_\infty$ and put it into standard form, $a = \sum_j a_{\tilde f_j}^* M_{h_j} a_{\tilde g_j}$.
	By \eqref{eq:wccr5}, the only diagonal operators, $M_h$, that appear from commutation of creation and annihilation operators are the convergent sequences $h = \frac{q}{S(q)}$ or $h = \frac{1}{q}$.
	Since convergence of sequences is preserved under shifts, finite products, and finite sums by convergent sequences, all diagonal operators in the standard form of $a$ above, $M_{h_j}$, have convergent sequences $\lim_{n \to \infty} h_j(n) = h_{j,\infty}$.
	By \cref{prop:state_embedding},
	$$\Psi_x(a) = \sum_j h_{j,\infty} \mathfrak m( F(p_1 \otimes \Id) \tilde{f}_j)(x) \overline{\mathfrak m( F(p_1 \otimes \Id) \tilde{g}_j)(x)}.$$
	Since this is true for all weak$^*$ limit points of $(\varphi_x^m)_{m=0}^\infty$, all weak$^*$ cluster points agree on the dense $*$-subalgebra $\mathcal A_\infty \subset \mathcal A$.
	By continuity of states, there can only be one weak$^*$ cluster point, hence proving existence and uniqueness of $\lim_{m \to \infty} \varphi_x^m = \Psi_x$.
\end{proof}

\begin{thm}
	The states $\Psi_x$, are $*$-homomorphisms from $\mathcal A_\infty$ to $\mathbb C$ making $\widehat{\Psi} \colon \mathcal A_\infty \to C_b^\infty(X)$, $(\widehat{\Psi} a)(x) = \Psi_x a$,  a $*$-algebra homomorphism.
	Furthermore, $\widehat{\Psi} \colon \mathcal A \to C_b(X)$ is a $C^*$-algebra homomorphism and the state space embedding $\Psi \colon X \to S(\mathcal A)$ is continuous in the weak$^*$ topology.
\end{thm}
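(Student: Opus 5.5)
The plan is to use the explicit formula for $\Psi_x$ obtained in the well-posedness argument for \cref{def:state_space_embedding}. If $a=\sum_j a^*_{\tilde f_j}M_{h_j}a_{\tilde g_j}\in\mathcal A_\infty$ is in standard form, then
\[
\Psi_x(a)=\sum_j h_{j,\infty}\,F_j(x)\overline{G_j(x)},
\qquad F_j=\mathfrak m(F(p_1\otimes\Id)\tilde f_j),\quad G_j=\mathfrak m(F(p_1\otimes\Id)\tilde g_j).
\]
Define the map $\chi\colon \mathcal A_\infty\to\mathbb C$ on generators by
\[
\chi(a^*_u)=\mathfrak m(F(p_1\otimes\Id)u)(x),\qquad \chi(a_u)=\overline{\chi(a^*_u)},\qquad \chi(M_h)=h_\infty .
\]
The claim that $\Psi_x$ is a $*$-homomorphism reduces to showing that $\Psi_x$ agrees with $\chi$ on all words, i.e. that $\chi$ is multiplicative.

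Since $\Psi_x$ is linear, it suffices to verify $\Psi_x(bc)=\Psi_x(b)\Psi_x(c)$ for monomials $b=a^*_{\tilde f}M_h a_{\tilde g}$ and $c=a^*_{\tilde f'}M_{h'}a_{\tilde g'}$. The product contains the middle factor $a_{\tilde g}a^*_{\tilde f'}$, which I will reorder using \eqref{eq:wccr5}. Each application of \eqref{eq:wccr5} produces two terms. In the first, a creation operator has moved past an annihilator with a diagonal factor $M_{q/S(q)}$, whose limit is $1$. In the second, the pair $a_\xi a^*_\eta$ is replaced by $\langle\xi,\eta\rangle M_{1/q}$, whose limit is $0$. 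The third and fourth relations in \eqref{eq:wccr} then move the diagonal operators to the middle, which only shifts sequences and so leaves their limits unchanged. As a result, in the limit the reordered product contributes
\[
h_\infty h'_\infty\,F\,F'\,\overline{G}\,\overline{G'}.
\]
Here I use that $\mathfrak m$ and $F(p_1\otimes\Id)$ intertwine $\vee$ with the pointwise product, so that $a^*_\xi a^*_\eta=a^*_{\xi\vee\eta}$ is evaluated to a product of functions. \cref{prop:state_embedding} then gives $\Psi_x(bc)=\Psi_x(b)\Psi_x(c)$.

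The main obstacle is bookkeeping: every error term has to carry a factor whose limit is zero, and all shifted sequences have to remain convergent. I would handle this by induction on the number of creation operators in $\tilde f'$, working with homogeneous tensors $u^{\vee m}$ as in the proof of \cref{prop:state_embedding}. A cleaner alternative, which I would try first, is to compute directly with \eqref{eq:alt_coherent_states}:
\[
\varphi^n_x(bc)=\frac{\langle a_{\tilde f}\mathcal K^{(n)}_x,\,M_h a_{\tilde g}\,a^*_{\tilde f'}M_{h'}a_{\tilde g'}\mathcal K^{(n)}_x\rangle_{\mathfrak F}}{\lVert \mathcal K^{(n)}_x\rVert^2_{\mathfrak F}} .
\]
The aim is to show that $a_{\tilde g}a^*_{\tilde f'}\mathcal K^{(n-k)}_x$ is asymptotically $\overline{G(x)}F'(x)\,\mathcal K^{(n-k)}_x$, up to a relative error of order $1/q$, using the ratio estimates from the proof of \cref{prop:state_embedding}.

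Compatibility with the involution follows from the formula, since $\Psi_x(a^*)=\overline{\Psi_x(a)}$ for a state. Hence $\widehat\Psi$ is a $*$-homomorphism on $\mathcal A_\infty$. Its values are finite sums of products of functions in $\mathcal H\subset C^\infty(X)$, and these are bounded because $\mathcal H$ is an RKHA, so $\widehat\Psi$ takes values in $C_b^\infty(X)$. A character satisfies $|\Psi_x(a)|\le\lVert a\rVert$, so $\lVert\widehat\Psi a\rVert_\infty\le\lVert a\rVert$ and $\widehat\Psi$ extends by density to a contractive $*$-homomorphism $\mathcal A\to C_b(X)$. Here $C_b(X)$ is closed in the sup norm, and multiplicativity passes to the limit. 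Finally, for weak$^*$ continuity of $\Psi$, the function $x\mapsto\Psi_x(a)=(\widehat\Psi a)(x)$ is continuous for $a\in\mathcal A_\infty$. For general $a\in\mathcal A$ it is a uniform limit of such continuous functions, and the states are uniformly bounded, so $x\mapsto\Psi_x(a)$ is continuous for every $a\in\mathcal A$. This is exactly weak$^*$ continuity of $\Psi$.
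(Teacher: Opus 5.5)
Your proposal is correct and follows essentially the same route as the paper: reorder annihilation--creation pairs with \eqref{eq:wccr5}, note that the $M_{1/q}$ terms vanish and the $M_{q/S(q)}$ factors tend to $1$ under $\Psi_x$, read off multiplicativity from \cref{prop:state_embedding}, and then use $\lvert\Psi_x(a)\rvert\le\lVert a\rVert$ to extend $\widehat\Psi$ to $\mathcal A$ and obtain weak$^*$ continuity. Two of your steps make explicit what the paper passes over quickly: applying \eqref{eq:wccr5} one degree-one generator at a time (via homogeneous tensors $u^{\vee m}$), which is needed because the relation only holds for vectors in $\mathfrak H$, and your uniform-limit argument for continuity on all of $\mathcal A$.
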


\begin{proof}
	Let $a,b \in \mathcal A_\infty$, $\eta,\xi \in T(\mathfrak H_0)$.
	We will use $a a_{\xi} a_{\eta}^* b$ as a generic element in $\mathcal A_\infty$ that is not in standard form.

	Observe that
	$$\Psi_x(a a_{\xi} a_{\eta}^* b) = \lim_{m \to \infty}\left(\varphi_x^m(a a_{\eta}^* M_{q/S(q)} a_{\xi} b) +  \langle \xi,\eta \rangle_{\mathfrak F} \varphi_x^m(a  M_{1/q} b)\right).$$
	The presence of $M_{1/q}$ forces the second term to be zero since shifts of $1/q$ will appear in all monomials in the standard form of $a  M_{1/q} b$ and $\lim_{n \to \infty} 1/q(n) = 0$.
	Similarly, since $\lim_{n \to \infty} q(n)/S(q)(n) = 1$, we have $\Psi_x(a a_{\xi} a_{\eta}^* b) = \Psi_x(a a_{\eta}^* a_{\xi} b)$.
	This establishes commutativity of creation and annihilation operators inside $\Psi_x$.
	As a result, the formula
	$$\Psi_x(a_{\tilde f}^* a_{\tilde g}) = \mathfrak m( F(p_1 \otimes \Id) \tilde{f})(x) \overline{\mathfrak m( F(p_1 \otimes \Id) \tilde{g})(x)}$$
	clearly makes $\Psi_x$ a $*$-homomorphism.

	Since all vectors in $\mathfrak H_0$ are sequences of smooth functions, we also see that $\widehat{\Psi} \colon \mathcal A_\infty \to C^\infty(X)$.
	Since states are unit-norm linear functionals,
	$$\sup_{x \in X} |\Psi_x(a)| \leq \lVert a \rVert_\mathcal A \text{ for } a\in \mathcal A_\infty,$$
	it follows that $\widehat{\Psi} \colon \mathcal A \to C_b(X)$ and the embedding $\Psi\colon x \mapsto \Psi_x$ is continuous from $X$ to $S(\mathcal A)$ with the weak$^*$ topology.
\end{proof}

\subsection{The quantum pseudometric}

Consider the extended pseudometric \eqref{eq:metric} induced by the Dirac operator $-i\overline{\mathcal D}$ on the state space of the $*$-algebra $\mathcal A_1$ in the first spectral triple $(\mathcal A_n, \mathfrak F, -i \overline{\mathcal D})$ from \cref{thm:spectral_triple},
$$d_1(\varphi, \psi) = \sup\left\{ | \varphi(a) - \psi(a)| \colon \lVert [\mathcal D, a ] \rVert \leq 1, a \in \mathcal A_1 \right\}.$$
We may build a simple lower bound for $d_1$ on the states $\{ \varphi_x:  x\in X\}$ and $\{ \Psi_x:  x\in X\}$ by only testing against elements $a^*_{F(E_1)f}$ for $f \in \mathcal S$.
Using the operator norm bound  $\lVert a^*_g \rVert \leq C \lVert g \rVert_{\mathcal H}$ for a constant $C>0$ and $g \in \mathcal H$, we get
\begin{align*}
	d_1(\varphi_x,\varphi_y) & \geq \sup\{ |f(x)-f(y)| : \text{$\lVert a^*_{e_1 \otimes A f - e_2 \otimes Sf} \rVert \leq 1$, $f\in \mathcal S$}\}                                            \\
	                         & \geq \frac{1}{C} \sup \left\{ |f(x)-f(y)| : \text{$\sqrt{\lVert A f \rVert^2_\mathcal H +\lVert S f \rVert^2_\mathcal H} \leq 1$, $f \in \mathcal S$}\right\},
\end{align*}
and
\begin{align*}
	d_1(\Psi_x,\Psi_y) & \geq \sup\{ |f(x)-f(y)| : \text{$\lVert a^*_{e_1 \otimes A f - e_2 \otimes Sf} \rVert \leq 1$, $f\in \mathcal S$}\}                                            \\
	                   & \geq \frac{1}{C} \sup \left\{ |f(x)-f(y)| : \text{$\sqrt{\lVert A f \rVert^2_\mathcal H +\lVert S f \rVert^2_\mathcal H} \leq 1$, $f \in \mathcal S$}\right\}.
\end{align*}
Thus, if $\mathcal S$ separates points in $X$, the extended pseudometric induced by $d_1$ under either of the embeddings $\mathcal K$ or $\Psi$ is non-degenerate.
Under \cref{asmp:alg}, this will be the case when $\mathcal H$ is a dense subspace of $C_0(X)$, including the examples in \cref{sec:poly_examples}.
While similar considerations apply for the pseudometrics $d_n(\varphi_x,\varphi_y)$ and $d_n(\Psi_x,\Psi_y)$ associated with $(\mathcal A_n, \mathfrak F, - i \overline{\mathcal D})$, $ n > 1$, we cannot easily bound these metrics away from zero uniformly in the limit since the norm of $F(E_n)\tilde{f}$ grows like $\phi^{Nn} \lVert \tilde{f} \rVert_{F_w(\mathcal S)}$, where $\phi = (1+\sqrt{5})/2$ is the golden ratio and $N$ is the maximal grading of $\tilde f$ in $T(\mathcal S)$.
Instead we will analyze the metrics induced on $X$ from $(\mathcal A_n, \mathfrak F, - i \overline{\mathcal D})$ and the embedding $\Psi$ from \cref{def:state_space_embedding}.

We first observe that $\Psi$ intertwines the dynamical vector field $V$ and the action $[\mathcal D, \cdot]$ of the Dirac operator on the algebras $\mathcal A_n$.

\begin{thm}
	For every $n \geq 2$ and $a \in \mathcal A_n$, $\Psi([\mathcal D, a]) = V(\Psi a).$
\end{thm}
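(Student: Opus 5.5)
Both sides of $\Psi([\mathcal D, a]) = V(\Psi a)$ are derivations, so the plan is to check the identity on generators and extend by the Leibniz rule. Here $\Psi$ means $\widehat\Psi$. The map $a \mapsto [\mathcal D, a]$ is a derivation on $\mathcal A_\infty$, and $\widehat\Psi$ is a $*$-homomorphism into $C_b^\infty(X)$ by the preceding theorem. Hence both $\widehat\Psi\circ \adj_{\mathcal D}$ and $V\circ\widehat\Psi$ are $\widehat\Psi$-derivations $\mathcal A_n \to C^\infty(X)$: for $a,b\in\mathcal A_n$,
\[
\widehat\Psi([\mathcal D,ab]) = \widehat\Psi([\mathcal D,a])\,\widehat\Psi(b) + \widehat\Psi(a)\,\widehat\Psi([\mathcal D,b]),
\]
and $V$ obeys the pointwise Leibniz rule. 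Both maps are also linear and kill $\Id_{\mathfrak F}$. So it suffices to verify the identity on the generators $a_{E_nf}$ and $a^*_{E_nf}$ with $f\in\mathcal S$. Linear extension over $\mathcal S$ with complex coefficients is harmless, since we treat $a_{E_nf}$ and $a^*_{E_nf}$ separately.

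For the generators we use $[\mathcal D, a^*_u] = a^*_{Lu}$ and $[\mathcal D, a_u]=a_{Lu}$ for $u\in\mathfrak H_0$, which were established after \cref{thm:spectral_triple}. The evaluation formula from the well-posedness proof of \cref{def:state_space_embedding} (with $h\equiv1$ and the other vector the vacuum) gives
\[
\Psi_x(a^*_u) = \mathfrak m(F(p_1\otimes\Id)u)(x) = ((p_1\otimes\Id)u)(x),\qquad \Psi_x(a_u) = \overline{((p_1\otimes\Id)u)(x)},\qquad u\in\mathfrak H.
\]
Taking $u=E_nf$ yields $\widehat\Psi(a^*_{E_nf}) = f$, since $F_1=1$. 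Next, \cref{thm:amplification}(b) with $k=1<n$ (this is where $n\ge2$ is needed) gives $(p_1\otimes\Id)LE_nf = Vf$. Therefore
\[
\widehat\Psi([\mathcal D,a^*_{E_nf}]) = Vf = V\widehat\Psi(a^*_{E_nf}).
\]
For the annihilation operators, $\widehat\Psi([\mathcal D,a_{E_nf}]) = \overline{Vf} = V\overline f = V\widehat\Psi(a_{E_nf})$, because $\vec V$ is a real vector field and so $V$ commutes with complex conjugation.

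The step I expect to be the main obstacle is making the derivation argument legitimate. The commutator $[\mathcal D,a]$ is a priori defined only on $T(\mathfrak H_0)$, so I must confirm that it lies in $\mathcal A_\infty$, so that $\widehat\Psi$ applies to it. This follows because $\mathfrak H_0$ is invariant under $L$ and $E_n f\in\mathfrak H_0$. The fact that $\widehat\Psi$ is multiplicative on $\mathcal A_\infty$ is exactly the previous theorem, and since $[\mathcal D,a]\in\mathcal A_\infty$ rather than $\mathcal A_n$, I need multiplicativity on the larger algebra. I would also check that the formula $\Psi_x(a^*_u)=((p_1\otimes\Id)u)(x)$ holds for the non-Fibonacci vectors $u=LE_nf$, since it was derived for general $\tilde f\in T(\mathfrak H)$. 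Finally, I would note that the theorem uses the derivation property of $V$ on products of functions in $\mathcal H_\infty$, which is valid in $C^\infty(X)$.
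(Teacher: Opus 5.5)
Your argument is correct, but it is organized differently from the paper's proof.

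\emph{The paper's route.} It writes an element of $\mathcal A_n$ in standard form, $a^*_{E_nf_1}\cdots a^*_{E_nf_k}M_h a_{E_ng_1}\cdots a_{E_ng_l}$ with $h$ convergent. It expands $[\mathcal D,\cdot]$ over the creation and annihilation factors, so each term replaces one $E_nf_i$ or $E_ng_j$ by $LE_nf_i$ or $LE_ng_j$. The factor $M_h$ is left untouched, silently using that $\mathcal D$ preserves the grading. Each resulting monomial is then evaluated with the closed-form expression $h_\infty\prod_i f_i(x)\prod_j\overline{g_j(x)}$ from \cref{prop:state_embedding}, and the result is recognized as the Leibniz expansion of $V$.

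\emph{Your route.} You invoke the preceding theorem that $\widehat\Psi$ is a $*$-homomorphism on $\mathcal A_\infty$. You then observe that $\widehat\Psi\circ\adj_{\mathcal D}$ and $V\circ\widehat\Psi$ are both $\widehat\Psi$-derivations, so it suffices to compare them on the generators $a_{E_nf}$ and $a^*_{E_nf}$.

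\emph{What each buys.} Your approach is shorter and more structural. It needs no normal ordering and never has to address $[\mathcal D,M_h]=0$. It also makes explicit the step $\overline{Vg}=V\bar g$ (because $\vec V$ is real), which the paper uses without comment on the annihilation factors.

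The price is that you need multiplicativity of $\widehat\Psi$ on all of $\mathcal A_\infty$, not just on $\mathcal A_n$, because $[\mathcal D,a]$ leaves $\mathcal A_n$; you correctly flag this. That multiplicativity is itself proved from the same standard-form and weighted-CCR analysis, so your proof rests on the same machinery one level removed.

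The paper's computation is more self-contained: it relies only on \cref{prop:state_embedding} and the existence of standard forms with convergent diagonal weights. It also yields an explicit formula for $\Psi_x([\mathcal D,a])$ along the way.
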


\begin{proof}
	\cref{prop:state_embedding} implies that $\Psi_x$ is well-defined on monomials whose diagonal operators $M_h$ are built from convergent sequences $h \in c(\mathbb N_0)$.
	For such monomials we have
	\begin{align*}
		\Psi_x\left([\mathcal D, a^*_{E_nf_1} \cdots a_{E_nf_k}^* M_h a_{E_ng_1} \cdots a_{E_ng_l}]\right) & = \Psi_x \left(a^*_{LE_nf_1} \cdots a_{E_nf_k}^* M_h a_{E_ng_1} \cdots a_{E_ng_l} \right)                  \\
		                                                                                                   & \quad +\Psi_x\left(a^*_{E_nf_1} a_{LE_nf_2}^* \cdots a_{E_nf_k}^* M_h a_{E_ng_1} \cdots a_{E_ng_l} \right) \\
		                                                                                                   & \quad + \ldots                                                                                             \\
		                                                                                                   & \quad + \Psi_x \left(a^*_{E_nf_1} \cdots a_{LE_nf_k}^* M_h a_{E_ng_1} \cdots a_{E_ng_l}\right)             \\
		                                                                                                   & \quad+\Psi_x \left(a^*_{E_nf_1} \cdots a_{E_nf_k}^* M_h a_{LE_ng_1} \cdots a_{E_ng_l}\right)               \\
		                                                                                                   & \quad + \Psi_x\left(a^*_{E_nf_1} \cdots a_{E_nf_k}^* M_h a_{E_ng_1}a_{LE_n g_2} \cdots a_{E_ng_l}\right)   \\
		                                                                                                   & \quad + \ldots                                                                                             \\
		                                                                                                   & \quad + \Psi_x\left(a^*_{E_nf_1} \cdots a_{E_nf_k}^* M_h a_{E_ng_1} \cdots a_{LE_ng_l}\right).
	\end{align*}
	Since, by \cref{thm:amplification}, $(p_1 \otimes \Id)LE_n f = Vf$ for $n \geq 2$, this simplifies to
	\begin{multline*}
		\Psi_x\left([\mathcal D, a^*_{E_nf_1} \cdots a_{E_nf_k}^* M_h a_{E_ng_1} \cdots a_{E_ng_l}]\right) \\
		\begin{aligned}
			 & =h_\infty \left( (Vf_1)(x) f_2(x) \cdots f_k(x) \overline{g_1(x) \cdots g_l(x)} + \cdots + f_1(x) f_2(x) \cdots f_k(x) \overline{g_1(x) \cdots (Vg_l)(x)} \right) \\
			 & = V\left[x \mapsto \Psi_x\left(a^*_{E_nf_1} \cdots a_{E_nf_k}^* M_h a_{E_ng_1} \cdots a_{E_ng_l}\right) \right],
		\end{aligned}
	\end{multline*}
	where $\lim_{n \to \infty} h(n) = h_\infty$.
	Restricting to sums of monomials that belong to $\mathcal A_n$ gives the result.
\end{proof}

We immediately have an upper bound on the quantum metrics $d_n$ since
$$\lVert V (\Psi a) \rVert_\infty = \lVert \Psi[\mathcal D, a] \rVert_\infty \leq \lVert \Psi \rVert \lVert [\mathcal D, a ] \rVert = \lVert [\mathcal D, a ] \rVert,$$
giving
\begin{align*}
	d_n(\Psi_x,\Psi_y) & = \sup\left\{ | \Psi_xa - \Psi_ya|:\; \text{$\lVert [\mathcal D, a ] \rVert <1$, $a \in \mathcal A_n$} \right\}                   \\
	                   & \leq \sup\left\{ | f(x) - f(y)|: \; \text{$\lVert Vf \rVert_\infty <1$, $f \in \alg^*(\mathcal S) \subset C^\infty(X)$} \right\}.
\end{align*}
In general, this upper bound may take on infinite values, but if $x$ and $y$ lie in the same dynamical orbit, it is bounded above by the optimal Lipschitz constant with respect to $V$ and for functions in $\alg^*(\mathcal S)$.

\begin{cor}
	\label{cor:quantum_metrics}
	If $x$ and $y$ lie in the same orbit under $\Phi^t$, then $d_n(\Psi_x, \Psi_y)$ is less than the smallest time $|t|$ such that $x=\Phi^t(y)$.
\end{cor}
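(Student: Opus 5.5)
Take $x = \Phi^{t_0}(y)$ with $|t_0|$ minimal among such times (if the orbit is periodic, pick the representative of smallest modulus). The natural route is to combine the intertwining theorem just proved with the fundamental theorem of calculus along the orbit. Fix $n \geq 2$ and $a \in \mathcal A_n$ with $\lVert [\mathcal D, a] \rVert < 1$, and put $F := \widehat{\Psi} a$, so that $F(z) = \Psi_z(a)$. By the preceding theorem, $F$ is a smooth bounded function on $X$ and $V F = \widehat{\Psi}([\mathcal D,a])$. Because each $\Psi_z$ is a state, $\lvert \Psi_z(b)\rvert \leq \lVert b \rVert$ for every $b$, and hence
\[
\lVert V F \rVert_\infty \leq \lVert [\mathcal D, a] \rVert < 1 .
\]
The bound $\lVert \widehat{\Psi}\rVert \leq 1$ was already used in the display preceding the corollary, so this step is immediate. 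One point needs care: $[\mathcal D,a]$ is defined on $T(\mathfrak H_0)$ and extends boundedly, and $\widehat{\Psi}$ is applied to the element of $\mathcal A_\infty$ representing it. This is legitimate because $[\mathcal D,\cdot]$ maps $\mathcal A_\infty$ into itself.

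Next, set $g(s) = F(\Phi^s(y))$ for $s$ between $0$ and $t_0$. Since $F$ is smooth and $\Phi^s$ is the flow of $\vec V$, we have $g'(s) = (VF)(\Phi^s(y))$; this is the same chain-rule computation as in the proof of \cref{prop:taylor_koopman}. Therefore
\[
\lvert \Psi_x(a) - \Psi_y(a)\rvert = \lvert g(t_0) - g(0) \rvert \leq \int_0^{|t_0|} \lvert (VF)(\Phi^{\pm s}(y))\rvert \, ds \leq \lVert VF\rVert_\infty \, |t_0| < |t_0| .
\]
Taking the supremum over all admissible $a$ gives $d_n(\Psi_x,\Psi_y) \leq |t_0|$. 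This bound holds for every time $t$ with $x = \Phi^t(y)$, so it holds for the smallest such $|t|$ in particular.

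The only step I expect to require real justification is the regularity of $F = \widehat{\Psi} a$ needed for the chain rule. The earlier theorem asserts $\widehat{\Psi}\colon \mathcal A_\infty \to C_b^\infty(X)$, and explicitly, via the formula from \cref{prop:state_embedding}, $F$ is a finite sum of products of $h_\infty$, functions $\mathfrak m(F(p_1\otimes\Id)\tilde f)$, and conjugates of such functions. These are smooth because $\mathcal H \subset C^\infty(X)$ and $\mathcal H_\infty$ is $V$-invariant.

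The case $n = 1$ is not covered by the intertwining theorem. For it I would note that $\mathcal A_1$ still satisfies the identity $V\widehat{\Psi} = \widehat{\Psi}\,\mathrm{ad}_{\mathcal D}$ up to the replacement of $V$ by $A$ in the first slot. The statement is therefore claimed, as in the theorem, for $n \geq 2$.
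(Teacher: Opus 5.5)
Your proof is correct and follows the paper's argument. The intertwining theorem together with $\lvert\Psi_z(b)\rvert\le\lVert b\rVert$ gives $\lVert V\widehat{\Psi}a\rVert_\infty\le\lVert[\mathcal D,a]\rVert$, calculus along the orbit then bounds $\lvert\Psi_x(a)-\Psi_y(a)\rvert$ by the elapsed time, and your restriction to $n\geq 2$ matches the scope of the intertwining theorem on which the paper's proof also implicitly rests. The only difference is cosmetic: integrating along $s\mapsto\Phi^s(y)$ for a fixed $t_0$ treats line, circle and fixed-point orbits uniformly, whereas the paper pulls back through the diffeomorphism $t\mapsto\Phi^t(x)$ and handles the periodic and stationary cases separately.
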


\begin{proof}
	Suppose that the orbit containing $x$ and $y$ is isomorphic to $\mathbb R$.
	Then, $\Phi_x \colon \mathbb R \to \{\Phi^t(x) : t \in \mathbb R\}$, $\Phi_x(t) = \Phi^t(x)$, is a diffeomorphism.
	For $f \in C^\infty(X)$, we have $\frac{d}{dt} f\circ \Phi_x (t) = V_{\Phi^t(x)} f$ and thus
	\begin{align*}
		d_n(\Psi_x,\Psi_y) & \leq \sup\left\{ | f(x) - f(y)|: \; \text{$\lVert Vf \rVert_\infty <1$, $f \in \alg^*(\mathcal S) \subset C^\infty(X)$} \right\}                      \\
		                   & \leq \sup\left\{ | f(x) - f(y)|: \; \text{$\left\lVert \frac{d}{dt} f\circ \Phi_x(t) \right\rVert_{\infty} <1$, $f \in C^\infty(X)$} \right\}         \\
		                   & \leq \sup\left\{ |g(0) - g(\Phi_x^{-1}(y))|: \; \text{$\left\lVert \frac{d}{dt} g(t) \right\rVert_{\infty} <1$, $g \in C^\infty(\mathbb R)$} \right\} \\
		                   & =|\Phi_x^{-1}(y)| = \lvert t \rvert.
	\end{align*}
	The claim can be proved similarly when the orbit is diffeomorphic to the unit circle and it holds trivially when the orbit it a point.
\end{proof}

The following is an analog of \cref{thm:state_evolution_convergence}, utilizing the embedding $\Psi$ instead of $\mathcal K$.

\begin{thm}\label{thm:spectral_triple_uniform_convergence}
	Let $\tilde{f} \in T(\mathcal S)$ be a lift of $f \in \mathcal H$ (i.e., $\mathfrak m\tilde{f} = f$).
	Then,
	\begin{equation*}
		\Psi_x\left(a^*_{F(E_n)\tilde{f}}\right) = f(x)
	\end{equation*}
	and for $|t| \leq R/3$
	$$\lim_{n\to \infty} \Psi_x \left(a^*_{F(E_n)\tilde{f}}(t) \right)=\lim_{n\to \infty} \Psi_x \left(\Adj_{e^{t\mathcal D}} a^*_{F(E_n)\tilde{f}} \right) = (U_tf)(x),$$
	where $R$ is the joint radius of analyticity for every function in $\mathcal S$ with respect to $V$ and $V^*$.
	Furthermore, the convergence is uniform over $x \in X$.
\end{thm}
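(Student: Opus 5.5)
The plan is to reduce both claims to the formula for $\Psi_x$ on creation operators, and then to the Fock-space convergence already used in \cref{thm:tensor_koopman}.

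\emph{Step 1: the identity at $t=0$.}
For $\xi\in T(\mathfrak H)$ and $h\equiv 1$, \cref{prop:state_embedding} gives
\[
\Psi_x(a^*_\xi)=\lim_{m\to\infty}\varphi_x^m(a^*_\xi)=\mathfrak m(F(p_1\otimes\Id)\xi)(x).
\]
Directly, \eqref{eq:alt_coherent_states} shows $\langle \mathcal K_x^{(m)},a^*_\xi\mathcal K_x^{(m)}\rangle=\langle a_\xi\mathcal K_x^{(m)},\mathcal K_x^{(m)}\rangle$. This is a ratio of tails of $\sum w^{-2}$, which tends to the stated value by \cref{asmp:normalized_kernel} and the subexponential estimate in \cref{prop:state_embedding}. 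Taking $\xi=F(E_n)\tilde f$ and using $F(p_1\otimes\Id)F(E_n)=\Id$ on $T(\mathcal S)$ (because $F_1=1$), we get $\Psi_x(a^*_{F(E_n)\tilde f})=(\mathfrak m\tilde f)(x)=f(x)$.

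\emph{Step 2: reduce to the evolved vector.}
Use $\Adj_{e^{t\overline{\mathcal D}}}a^*_u=a^*_{e^{\overline Lt}u}$, together with multiplicativity $a^*_{\eta\vee\xi}=a^*_\eta a^*_\xi$. These give
\[
\Adj_{e^{t\overline{\mathcal D}}}\,a^*_{F(E_n)\tilde f}=a^*_{\xi_n(t)},\qquad \xi_n(t):=e^{t\overline{\mathcal D}}F(E_n)\tilde f=F(e^{\overline Lt})F(E_n)\tilde f.
\]
The vector $\xi_n(t)$ is no longer in $T(\mathfrak H)$; it lies in $\bigoplus_{k\le l}\mathfrak H^{\vee k}$, with $l$ the top degree of $\tilde f$. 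The formula of Step 1 still applies to it. Either extend \cref{prop:state_embedding} by continuity, since $\xi\mapsto a^*_\xi$ is bounded from $\mathfrak F$ to $B(\mathfrak F)$ and $\Psi_x$ is a state, so both sides are continuous in $\xi$ on finite-degree subspaces. Alternatively, apply the tail computation directly, since $\mathcal K^{(m)}_x$ only pairs with $p_1$-components. This yields
\[
\Psi_x(a^*_{F(E_n)\tilde f}(t))=\mathfrak m\bigl(F(p_1\otimes\Id)e^{\overline{\mathcal D}t}F(E_n)\tilde f\bigr)(x).
\]

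\emph{Step 3: convergence.}
By \cref{thm:tensor_koopman}, for $|t|\le R/3$ the right-hand side converges in $\mathcal H$ to $U_t\mathfrak m\tilde f=U_tf$. Pointwise evaluation is bounded with $|g(x)|\le\lVert k_x\rVert\,\lVert g\rVert_{\mathcal H}=\lVert g\rVert_{\mathcal H}$ by \eqref{eq:rkha_boundedness} and \cref{asmp:normalized_kernel}. Hence the convergence holds uniformly in $x\in X$.

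\emph{Main obstacle.}
The delicate point is Step 2: justifying the formula for $\Psi_x(a^*_\xi)$ when $\xi$ is a non-algebraic element of $\mathfrak F$ of bounded degree. The approach is to approximate $\xi$ in norm by elements of $T(\mathfrak H)$. Then use continuity of $\xi\mapsto a^*_\xi$ in operator norm, continuity of $\xi\mapsto\mathfrak m F(p_1\otimes\Id)\xi$ on $\bigoplus_{k\le l}\mathfrak H^{\vee k}$ (where $\mathfrak m$ is bounded), and the fact that states have norm one.
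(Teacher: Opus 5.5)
Your proposal is correct and follows essentially the same route as the paper: evaluate the state on creation operators via the relation $a_u\mathcal K_x^{(m)} \propto \mathcal K_x^{(m-1)}$, rewrite $\Adj_{e^{t\overline{\mathcal D}}}a^*_{F(E_n)\tilde f}$ as $a^*_{F(e^{\overline L t})F(E_n)\tilde f}$, and conclude with \cref{thm:tensor_koopman} plus bounded point evaluation for uniformity in $x$. The differences are minor. The paper observes that $\varphi_x^m$ already takes the limiting value for every $m$, because $\langle \mathcal K_x^{(m-j)},\mathcal K_x^{(m)}\rangle_{\mathfrak F}=\lVert \mathcal K_x^{(m)}\rVert_{\mathfrak F}^2$, so no subexponential estimate is needed here. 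Your ``main obstacle'' is also milder than you suggest: the paper's $T(\mathfrak H)$ is built from the completed powers $\mathfrak H^{\vee k}$, so $\xi_n(t)$ already lies in it, though your continuity argument is a harmless way to cover the homogeneous-tensor spanning step.
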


\begin{proof}
	The result follows from unpacking the various definitions above.
	First, we have
	$$\varphi_x^m\left(a^*_{F(E_n)\tilde{f}}\right) = \frac{\left\langle \mathcal K_x^{(m)} , a^*_{F(E_n)\tilde{f}} \mathcal K_x^{(m)} \right\rangle_{\mathfrak F}}{\lVert \mathcal K_x^{(m)} \rVert_{\mathfrak F}^2}= \frac{\left\langle a_{F(E_n)\tilde{f}} \mathcal K_x^{(m)} ,  \mathcal K_x^{(m)} \right\rangle_{\mathfrak F}}{\lVert \mathcal K_x^{(m)} \rVert_{\mathfrak F}^2} = f(x) \frac{\left\langle \mathcal K_x^{(m-1)} ,  \mathcal K_x^{(m)} \right\rangle_{\mathfrak F}}{\lVert \mathcal K_x^{(m)} \rVert_{\mathfrak F}^2} = f(x),$$
	and the claim follows since the last expression above is independent of $m$.
	For the second claim, using~\eqref{eq:alt_coherent_states}, we get
	\begin{align*}
		\varphi_x^k \left(a^*_{F(E_n)\tilde{f}}(t) \right) & = \varphi_x^k \left(a^*_{F(e^{Lt})F(E_n)\tilde{f}} \right)                                                                                                                                                              \\
		                                                   & = \mathfrak m \left(F(p_1 \otimes \Id)F(e^{Lt})F(E_n)\tilde{f} \right)(x)  \frac{\left\langle \mathcal K_x^{(k-1)} ,  \mathcal K_x^{(k)} \right\rangle_{\mathfrak F}}{\lVert \mathcal K_x^{(k)} \rVert_{\mathfrak F}^2} \\
		                                                   & = \mathfrak m \left(F(p_1 \otimes \Id)F(e^{Lt})F(E_n)\tilde{f} \right)(x).
	\end{align*}
	Again, the last expression is independent of $k$, so the first limit is superfluous.
	Then, by \cref{thm:tensor_koopman},
	\begin{equation*}
		\left| \lim_{k\to \infty} \varphi_x^k \left(a^*_{F(E_n)\tilde{f}}(t)\right) - (U_tf)(x)\right| \leq \lVert \Delta \rVert \left\lVert \mathfrak m\left( F(p_1 \otimes \Id) e^{\mathcal D t} F(E_n) \tilde{f} \right) - U_t \mathfrak m(\tilde{f}) \right\rVert_\mathcal H
	\end{equation*}
	converges to zero as $n\to \infty$, uniformly in $x \in X$ since the right-hand side is independent of $x$.
\end{proof}

In contrast to \cref{thm:state_evolution_convergence}, \cref{thm:spectral_triple_uniform_convergence} applies more broadly to elements of $\mathcal A_n$ since $\widehat{\Psi}$ is a $*$-homomorphism.

\begin{cor}
	Let $f \in \alg^*(\mathcal S) \subset C_b(X)$ and fix a quantum observable $O \in \mathcal A_1$ such that $\Psi(O) = f$, of which there are many choices.
	Put $O$ into standard form with $\tilde f_1, \ldots, \tilde f_N, \tilde g_1, \ldots, \tilde g_N \in T(\mathcal S)$ and $h_1, \ldots, h_N \in c(\mathbb N_0)$, $O= \sum_{j=1}^N a_{F(E_1)\tilde{f}_j}^* M_{h_j} a_{F(E_1)\tilde{g}_j}$.
	Then, defining
	$$O_{\tilde{f}_j, h_j, \tilde{g}_j,n} = \sum_{j=1}^N a_{F(E_n)\tilde{f}_j}^* M_{h_j} a_{F(E_n)\tilde{g}_j} \in \mathcal A_n,$$
	we have $\Psi(O_{\tilde{f}_j, \tilde{g}_j,n})= f$ for all $n \geq 1$ and
	$$\lim_{n \to \infty} \Psi_x \left( \Adj_{e^{ t \mathcal D}} O_{\tilde{f}_j, \tilde{g}_j,n} \right) = (U_tf)(x),$$
	where $\lvert t \rvert < R/3$ and the convergence is uniform with respect to $x \in X$.
\end{cor}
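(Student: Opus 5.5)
The plan is to reduce everything to the multiplicativity of $\widehat\Psi$ together with the single-generator convergence already established in \cref{thm:spectral_triple_uniform_convergence}.

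\emph{Step one: the identity $\Psi(O_{\tilde f_j,\tilde g_j,n}) = f$ for every $n \geq 1$.} By \cref{prop:state_embedding} and the well-posedness argument for \cref{def:state_space_embedding}, each monomial $a^*_{F(E_n)\tilde f_j} M_{h_j} a_{F(E_n)\tilde g_j}$ satisfies
\[
\Psi_x\bigl(a^*_{F(E_n)\tilde f_j} M_{h_j} a_{F(E_n)\tilde g_j}\bigr) = h_{j,\infty}\, \mathfrak m\bigl(F(p_1\otimes\Id)F(E_n)\tilde f_j\bigr)(x)\, \overline{\mathfrak m\bigl(F(p_1\otimes\Id)F(E_n)\tilde g_j\bigr)(x)}.
\]
Since $p_1 E_n = \Id_{\mathcal H}$ (the first Fibonacci coefficient is $F_1 = 1$), we have $F(p_1\otimes\Id)F(E_n)\tilde f_j = \tilde f_j$. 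The right-hand side is therefore $h_{j,\infty}(\mathfrak m\tilde f_j)(x)\overline{(\mathfrak m\tilde g_j)(x)}$, which does not depend on $n$. Summing over $j$ and comparing with the case $n = 1$, where $\Psi(O) = f$, gives $\Psi(O_{\tilde f_j,\tilde g_j,n}) = f$.

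\emph{Step two: evolve each factor.} Conjugation by the unitary $e^{t\overline{\mathcal D}}$ is an automorphism. Using $\Adj_{e^{t\overline{\mathcal D}}}a^*_u = a^*_{e^{\overline L t}u}$, and its Fock-space analogue $a^*_{F(e^{Lt})\xi}$ for $\xi \in T(\mathfrak H)$ (which follows from multiplicativity of $a^*$ under $\vee$ and $F(e^{Lt}) = e^{\overline{\mathcal D}t}$), we get
\[
\Adj_{e^{t\overline{\mathcal D}}}\bigl(a^*_{F(E_n)\tilde f_j} M_{h_j} a_{F(E_n)\tilde g_j}\bigr) = a^*_{F(e^{Lt})F(E_n)\tilde f_j}\,\bigl(e^{t\overline{\mathcal D}}M_{h_j}e^{-t\overline{\mathcal D}}\bigr)\, a_{F(e^{Lt})F(E_n)\tilde g_j}.
\]
Because $e^{t\overline{\mathcal D}} = \bigoplus_k (e^{\overline L t})^{\vee k}$ preserves the grading, it commutes with the diagonal operator $M_{h_j}$, so the middle factor is just $M_{h_j}$. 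The evolved monomial is therefore again in standard form.

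\emph{Step three: take the limit.} Applying the formula from \cref{prop:state_embedding} to the evolved monomial yields
\[
h_{j,\infty}\,\mathfrak m\bigl(F(p_1\otimes\Id)e^{\overline{\mathcal D}t}F(E_n)\tilde f_j\bigr)(x)\;\overline{\mathfrak m\bigl(F(p_1\otimes\Id)e^{\overline{\mathcal D}t}F(E_n)\tilde g_j\bigr)(x)}.
\]
By \cref{thm:tensor_koopman}, combined with the pointwise bound \eqref{eq:rkha_boundedness}, each of the two factors converges uniformly in $x$ to $U_t\mathfrak m\tilde f_j$ and to $\overline{U_t\mathfrak m\tilde g_j}$ respectively. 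These functions are uniformly bounded, so the product of the two factors converges uniformly as well. Since $U_t$ is a composition operator, it commutes with products and with complex conjugation. Summing over $j$ then gives the uniform limit
\[
U_t\Bigl(\sum_j h_{j,\infty}\,\mathfrak m\tilde f_j\,\overline{\mathfrak m\tilde g_j}\Bigr) = U_t f.
\]

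\emph{Main obstacle.} The delicate point is applying \cref{prop:state_embedding} to the evolved vectors $F(e^{Lt})F(E_n)\tilde f_j$. These lie in $\mathfrak H^{\vee k}$, so they are not in $T(\mathfrak H_0)$ and are not finite sums. I would handle this by noting that the computation via \eqref{eq:alt_coherent_states} only uses homogeneous degree and the identity $a_u\mathcal K^{(m)}_x = \overline{\mathfrak m(F(p_1\otimes\Id)u)(x)}\,\mathcal K_x^{(m-1)}$. Both extend from $T(\mathfrak H)$ to each $\mathfrak H^{\vee k}$ by continuity of $u\mapsto a_u$, so the proposition holds for such vectors. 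A secondary subtlety is the degenerate range $|t| = R/3$ versus $|t| < R/3$. This causes no difficulty here, since the statement only requires the strict inequality $|t| < R/3$, which lies inside the range covered by \cref{thm:tensor_koopman}.
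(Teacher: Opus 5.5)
Your argument is correct and follows essentially the paper's route. The paper obtains this corollary from the multiplicativity of $\widehat\Psi$ together with \cref{thm:spectral_triple_uniform_convergence}. You unpack these two ingredients into the factorization formula of \cref{prop:state_embedding}, applied to the evolved standard-form monomial after noting that $e^{t\overline{\mathcal D}}$ commutes with $M_{h_j}$, and into \cref{thm:tensor_koopman} combined with the pointwise bound \eqref{eq:rkha_boundedness}.

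Using \cref{prop:state_embedding} directly has a small advantage: it avoids needing multiplicativity of $\Psi_x$ on the diagonal factors $M_{h_j}$, which need not lie in $\mathcal A_\infty$. Your continuity remark about evolved vectors in the completed spaces $\mathfrak H^{\vee k}$ also makes explicit a step the paper uses only tacitly.
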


\section{Kernel mean embedding and the statistical evolution of observables}
\label{sec:discussion}

The kernel mean embedding of probability measures \cite{BerlinetThomasAgnan04,GrettonEtAl06,SmolaEtAl07,SriperumbudurEtAl10} is a procedure for representing Borel probability measures on topological spaces by RKHS vectors given by vector-valued integration of the kernel sections,
\begin{equation}
	\label{eq:kme}
	h = \int_X k(\cdot, x) \, d\mu(x).
\end{equation}
Here, $k\colon X \times X \to \mathbb R$ is a measurable, bounded, positive-definite kernel on $X$, $\mu$ is a Borel probability measure, and the RKHS vector $h \in \mathcal H$ is defined uniquely by $h(y) = \int_X k(y,x) \, d\mu(x)$ for all $y \in X$.
Using $h$, we may compute expectation values of observables $f \in \mathcal H$ using RKHS inner products,
\begin{equation*}
	\int_X f \, d\mu = \langle h, f\rangle_{\mathcal H}.
\end{equation*}

Let $\mathcal P(X)$ denote the set of Borel probability measures on $X$.
The kernel $k$ is said to be characteristic if the map $\mu \mapsto h$ from~\eqref{eq:kme} is injective on $\mathcal P(X)$.
It is known \cite{SriperumbudurEtAl10} that the kernel mean embedding induced by a characteristic kernel metrizes the weak$^*$ topology on $\mathcal P(X)$.
Moreover, if $\mathcal H$ lies dense in $C_0(X)$ then its reproducing kernel is characteristic.
This includes the RKHA examples on tori from \cref{sec:poly_examples}.

Thinking of a point $x \in X$ as the corresponding Dirac probability measure $\delta_x \in \mathcal P(X)$, \cref{thm:state_evolution_convergence,thm:spectral_triple_uniform_convergence} may be thought of as providing approximations for the evolution of expectation values of observables of dynamical systems with respect to Dirac probability measures.
We may extend these results to expectation values with respect to arbitrary probability measures in $\mathcal P(X)$ by defining kernel mean embeddings associated with the state space embeddings $x \mapsto \varphi_x $ and $x \mapsto \psi_x$ from \cref{sec:state_space_embedding_and_metric}.
Reusing notation from that section, and making the same assumptions as in that section, we define $\mathcal K \colon \mathcal P(X) \to S(\mathcal A)$ and $\Psi \colon \mathcal P(X) \to S(\mathcal A)$, where $\mathcal K(\mu) \equiv \mathcal K_\mu$ and $\Psi(\mu) \equiv \Psi_\mu$ are defined weakly via the integrals
\begin{equation*}
	\mathcal K_\mu a = \int_X \varphi_x(a) \, d\mu(x), \quad \Psi_\mu(a) = \int_X \psi_x(a)\, d\mu(x).
\end{equation*}
Defining the time-evolved states
$$\mathcal K_\mu^{(t)} = \mathcal K_\mu \circ \Adj_{e^{t\mathcal D}}, \quad \quad \Psi_\mu^{(t)} = \Psi_\mu \circ \Adj_{e^{t\mathcal D}},$$
we may then reconstruct the evolution of expectation values of observables using either scheme:
$$\int_X (U_tf)(x) \, d\mu(x) = \lim_{n \to \infty} \left(\mathcal K_\mu^{(t)} O_{\tilde{f},n}\right) = \lim_{n \to \infty} \Psi_{\mu}^{(t)}(O_{\tilde{f}_j,h_j,\tilde{g}_j,n}),$$
whenever $f(x) = \mathcal K_{\delta_x}(O_{\tilde{f},n}) = \Psi_{\delta_x}(O_{\tilde{f}_j,h_j,\tilde{g}_j,n})$.

\subsection{Discussion}

To draw an analogy with the Koopman--von Neumann method for classical statistical dynamics, we have replaced the $L^2(X)$ Hilbert space associated with a volume measure on the manifold with a weighted Fock space, $\mathfrak F$, generated by the amplification $\ell^2(\mathbb N) \otimes \mathcal H$ of a finitely generated RKHA on $X$ that satisfies joint analyticity conditions with respect to the dynamical generator, $V$, and its adjoint.
We have also replaced the Koopman--von Neumann wavefunctions $\psi_t \in L^2(X)$ (see \cref{sec:introduction}) with quantum states, $\mathcal K_\mu$ or $\Psi_\mu$, of a $C^*$-algebra $\mathcal A \subset B(\mathfrak F)$.
Doing so, has allowed us to reconstruct the statistical evolution of observables in $\mathcal H$ through a unitary evolution on $\mathfrak F$ generated by a Dirac operator, $-i \overline{\mathcal D}$, for Borel probability measures $\mu \in \mathcal P(X)$ (not necessarily absolutely continuous with respect to the volume measure).
This approach has also allowed us to build a family of weak spectral triples and associated quantum metrics on $X$, the latter behaving as Lipschitz metrics under the dynamical vector field.

Compiled in \cref{tab:comparison} is a comparison between Koopman von Neumann mechanics and the schemes developed here.
There, the Hilbert space state vector $\psi_0 = \sqrt{p_0} \in L^2(X)$ associated with a probability density $p_0 \in L^1(X)$ is seen to play an analogous role as the vectors $\frac{\mathcal K_x}{\lVert\mathcal K_x \rVert}_{\mathfrak F}$ underpinning the states $\mathcal K_{\delta_x}$ under the first scheme.
Observe that no state vector is given for the second scheme since the states $\Psi_{\delta_x}$ are not vector states for the representation of $\mathcal A_n$ on $\mathfrak F$.
Moreover, neither scheme produces vector states when $\mu$ is not a Dirac measure.
Using the GNS construction on $\mathcal A_n$ may provide state vectors for $\Psi$, but we do not explore that here.

\begin{table}
	\caption{\label{tab:comparison}Basic objects used in the Koopman--von Neumann formulation of classical statistical dynamics and the measure-free schemes studied in this paper.}
	\begin{tabular}{cccc}
		                     & Koopman--von Neumann  & \multicolumn{2}{c}{Measure-Free Koopman--von Neumann}                                                                             \\
		\cline{2-4}
		                     &                       & Scheme 1                                                                                   & Scheme 2                             \\
		\cline{3-4}
		Unitary Evolution    & $e^{t\tilde{A}}$      & $e^{t\mathcal D}$                                                                          & $e^{t\mathcal D}$                    \\
		Observable           & $f$                   & $O_{\tilde{f},n}$                                                                          & $O_{\tilde{f}_j,h_j, \tilde{g}_j,n}$ \\
		Initial state        & $p_0$                 & $\mathcal K_{\mu}$                                                                         & $\Psi_{\mu}$                         \\
		State vector         & $\psi_0 = \sqrt{p_0}$ & $\frac{\mathcal K_x}{\lVert \mathcal K_x \rVert_{\mathfrak F}}$                            & N/A                                  \\
		Evolved state        & $p_t$                 & $\mathcal K_{\mu}^{(t)}$                                                                   & $\Psi_{\mu}^{(t)}$                   \\
		Evolved state vector & $\psi_t = \sqrt{p_t}$ & $e^{-t\overline{\mathcal D}}\frac{\mathcal K_x}{\lVert \mathcal K_x \rVert_{\mathfrak F}}$ & N/A                                  \\
		\hline
	\end{tabular}
\end{table}

\section*{Acknowledgments}

Dimitrios Giannakis acknowledges support from the U.S.\ Department of Defense, Basic Research Office under Vannevar Bush Faculty Fellowship grant N00014-21-1-2946 and the U.S.\ Department of Energy under grant DE-SC0025101.
Michael Montgomery was supported as a postdoctoral fellow from the first grant.

\begin{appendices}

	\section{Reproducing kernel Hilbert algebras}
	\label{app:rkha}

	In this appendix, we summarize a few properties of RKHAs as defined in \cref{def:rkha}.
	Further details on these topics can be found in \cite{DasGiannakis23,DasEtAl23,GiannakisMontgomery25}.

	\subsection{Banach algebra norm.}
	An RKHA $\mathcal H$ on a set $X$ can be equipped with a norm $\vertiii{\cdot}$ induced by the operator norm on multiplication operators in $B(\mathcal H)$,
	\begin{equation*}
		\vertiii{f} = \sup_{g \in \mathcal H \setminus \{0\}} \frac{\lVert f g \rVert_{\mathcal H}}{\lVert g \rVert_{\mathcal H}}.
	\end{equation*}
	By properties of operator norms, $\vertiii{\cdot}$ is a submultiplicative (Banach algebra) norm,
	\begin{equation*}
		\vertiii{fg} \leq \vertiii{f} \vertiii{g}, \quad \forall f, g \in \mathcal H.
	\end{equation*}
	If $\mathcal H$ is unital, with unit $1_X(x) = 1$, this norm is equivalent to the Hilbert space norm,
	\begin{equation*}
		\frac{1}{\lVert 1_X \rVert_{\mathcal H}} \lVert f \rVert_{\mathcal H} = \frac{\lVert f 1_X \rVert_{\mathcal H}}{\lVert 1_X \rVert_{\mathcal H}} \leq \vertiii{f} = \sup_{g \in \mathcal H \setminus \{0\}} \frac{\lVert \Delta^*(f \otimes g)\rVert_{\mathcal H \otimes \mathcal H}}{\lVert g \rVert_{\mathcal H}} \leq \lVert \Delta \rVert \lVert f \rVert_{\mathcal H}.
	\end{equation*}
	If $\mathcal H$ is non-unital, only the upper bound holds and $\vertiii{\cdot}$ induces a coarser topology than the Hilbert space norm, $\lVert \cdot \rVert_{\mathcal H}$.

	\subsection{Spectrum and cospectrum}
	\label{app:spec_cospec}

	Using standard definitions from the theory of abelian Banach algebras, we define the spectrum, $\sigma(\mathcal H)$, of an RKHA $\mathcal H$ on a set $X$ as the set of nonzero, multiplicative linear functionals on $\mathcal H$ called characters, equipped with the weak$^*$ topology from $\mathcal H$.
	All characters are automatically continuous so $\sigma(\mathcal H)$ is a subset of $\mathcal H^*$.
	If $\mathcal H$ is unital, it is a weak$^*$ compact set.

	It is immediate that $\sigma(\mathcal H)$ contains all nonzero evaluation functionals $\langle k_x, \cdot \rangle_{\mathcal H}$, $x \in X$.
	This implies that the spectrum separates points in $\mathcal H$ and we can canonically identify $\mathcal H$ with a subspace of $C(\sigma(\mathcal H))$ under the Gelfand transform, $f \in \mathcal H \mapsto \hat f \in C(\sigma(\mathcal H))$ with $\hat f(\phi) = \phi(f)$.

	Under appropriate conditions on the reproducing kernel $k$, $\sigma(\mathcal H)$ is homeomorphic to $X$ as a topological space, with the homeomorphism given by the feature canonical map from~\eqref{eq:canonical_feature_map}, $x \mapsto \langle \kappa(x), \cdot \rangle_{\mathcal H}$.
	Examples include translation-invariant kernels on LCAs satisfying the Gelfand--Raikov--Shilov condition (see equation (GRS) in \cite{GiannakisMontgomery25}).
	The kernels on $X = \mathbb T^d$ induced from the subexponential weights in~\eqref{eq:subexp_weights} belong in this class.
	In other cases, $\sigma(\mathcal H)$ may contain additional elements besides the evaluation functionals; see \cite{DasEtAl23} for examples in that direction based on kernels on $\mathbb T^1$ that fail to meet the GRS condition.
	Still, for any compact subset $X \subset \mathbb R^d$ there exists an RKHA $\mathcal H \subseteq C(X)$ such that $X \cong \sigma(\mathcal H)$; see \cite{GiannakisMontgomery25}*{Theorem~3.8}.

	The cospectrum of an RKHA $\mathcal H$, denoted as $\sigma_\text{co}(\mathcal H)$ is the set of all nonzero elements $f \in \mathcal H$ satisfying $\Delta f = f \otimes f$.
	It is equipped with the weak topology from $\mathcal H$.
	By the Riesz representation theorem, the spectrum and cospectrum are homeomorphic as topological spaces, as $ f \in \sigma(\mathcal H)$ if and only if $\langle f, \cdot \rangle_{\mathcal H} \in \sigma(\mathcal H)$.

	\subsection{RKHA examples}
	\label{app:rkha_examples}

	On LCAs such as $\mathbb R^d$ or $\mathbb T^d$, RKHAs can be built as Fourier images of weighted $L^2$ spaces on the dual group, associated with subconvolutive weights.
	The examples on $\mathbb T^d$ discussed in \cref{sec:poly_examples} fall in that category.
	In fact, it can be shown \cite{GiannakisMontgomery25}*{Theorem~2.3} that the inverse square summability and subconvolutivity conditions in \eqref{eq:subconv} (generalized to an arbitrary LCA) are necessary and sufficient for an RKHS with translation-invariant continuous kernel on an LCA to be an RKHA.
	For context, we note that subconvolutive weight functions have a rich history of study in harmonic analysis; see e.g., work of Feichtinger~\cite{Feichtinger79} which initiated many developments in the field.
	A survey of weight functions in harmonic analysis can be found in \cite{Grochenig07}.

	Another approach for building RKHAs on arbitrary sets is through weighted power series of bounded positive-definite kernels.
	Let $\mathbb H$ be a Hilbert space and $w: \mathbb N_0 \to \mathbb R_{>0}$ a weight satisfying
	\begin{equation}
		\label{eq:subconv_fock_space_weights}
		w(0) = 1, \quad w^{-2} \in \ell^1(\mathbb N_0), \quad  (w^{-2} * w^{-2})(n) \leq C w^{-2}(n).
	\end{equation}
	Let also $R_w$ be the radius convergence of the series $\sum_{n=0}^\infty w^{-2}(n) z^n$, $z \in \mathbb C$, where $R_w \geq 1$ since $w^{-2} \in \ell^1(\mathbb N_0)$.
	Then, given a bounded feature map $\kappa \colon X \to \mathbb H$, normalized such that $\sup_{x\in X}\lVert \kappa(x) \rVert_{\mathbb H} \leq R_w$, the function $ \tilde k \colon X \times X \to \mathbb C$ defined as
	\begin{equation}
		\label{eq:power_series_kernel}
		\tilde k(x, y) = \sum_{n=0}^\infty w^{-2}(n) \langle \kappa(x), \kappa(y) \rangle_{\mathbb H}^n
	\end{equation}
	is the reproducing kernel for a unital RKHA, $\mathcal H_{\tilde k}$, on $X$, where the series in~\eqref{eq:power_series_kernel} converges absolutely and uniformly over $(x, y) \in X \times X$.
	Moreover, the cospectrum of $\mathcal H_{\tilde k}$ consists of functions of the form
	\begin{equation*}
		f(x) = \sum_{n=0}^\infty w^{-2}(n) \langle \kappa(x), u \rangle_{\mathbb H}^n,
	\end{equation*}
	where $u$ is any nonzero vector of norm $\lVert u \rVert_{\mathbb H} \leq R_w$.
	In other words, the spectrum and cospectrum of $\mathcal H_{\tilde k}$ are homeomorphic to a closed ball of radius $R_w \geq 1$ in the underlying Hilbert space $\mathbb H$, the latter equipped with the weak topology.
	If $\mathbb H = \mathcal H$ is an RKHS on $X$ and $\kappa \colon X \to \mathcal H$ is chosen as the canonical feature map (see \eqref{eq:canonical_feature_map}), then $\tilde k(x, y) = \sum_{n=0}^\infty w^{-2}(n) k(x,y)^n$ and $\mathcal H_{\tilde k}$ contains $\mathcal H$ as a dense subspace.
	See \cite{GiannakisEtAl25} for further details.

	\section{Weighted symmetric Fock space}
	\label{app:Fock_RKHA}

	Let $F(\mathbb H) = \overline{T^\otimes(\mathbb H)}$ denote the full Fock space, which is closed under the inner product
	\begin{equation}
		\label{eq:fock_innerp}
		\begin{gathered}
			\langle \alpha, \beta \rangle_{F(\mathbb H)} = \bar \alpha \beta, \quad \alpha, \beta \in \mathbb C, \\
			\langle \alpha, u \rangle_{F(\mathbb H)} = 0, \quad \alpha \in \mathbb C, \quad u \in \mathbb H, \\
			\langle u_1 \otimes \cdots \otimes u_n, v_1 \otimes \cdots \otimes v_n \rangle_{F(\mathbb H)} = \prod_{i=1}^n \langle u_i, v_i \rangle_{\mathbb H}, \quad u_i, v_i \in \mathbb H.
		\end{gathered}
	\end{equation}
	We also define the weighted symmetric Fock space ${F_w(\mathbb H)}$ as the closure of the symmetric tensor algebra $T(\mathbb H) := \mathbb C \oplus \mathbb H \oplus \mathbb H^{\vee 2} \oplus \ldots$ with respect to the inner product satisfying (cf.\ \eqref{eq:fock_innerp})
	\begin{equation*}
		\begin{gathered}
			\langle \alpha, \beta \rangle_{F_w(\mathbb H)} = \bar \alpha \beta, \quad \alpha, \beta \in \mathbb C, \\
			\langle \alpha, f \rangle_{F_w(\mathbb H)} = 0, \quad \alpha \in \mathbb C, \quad f \in \mathbb H, \\
			\langle u_1 \vee \cdots \vee u_n, v_1 \vee \cdots \vee v_n \rangle_{{F_w(\mathbb H)}} = \frac{w^2(n)}{n!^2} \sum_{\sigma,\sigma'\in S_n} \prod_{i=1}^n \langle u_{\sigma(i)}, v_{\sigma'(i)} \rangle_{\mathbb H}, \quad u_i, v_i \in \mathbb H,
		\end{gathered}
	\end{equation*}
	for a strictly positive weight function $w \colon \mathbb N_0 \to \mathbb R_{>0}$.
	Here, $\vee$ denotes the symmetric tensor product, defined as the average
	\begin{equation}
		\label{eq:sym_tensor_prooduct}
		u_1 \vee \dots \vee u_n = \frac{1}{n!} \sum_{\sigma \in S_n} u_{\sigma(1)} \otimes \dots \otimes u_{\sigma(n)}, \quad u_i \in \mathbb H,
	\end{equation}
	over the $n$-element permutation group $S_n$, and $\mathbb H^{\vee n}$ is the closed subspace of $\mathbb H^{\otimes n}$ consisting of symmetric tensors.
	The map $u_1 \otimes \cdots \otimes u_n \mapsto u_1 \vee \cdots \vee u_n$ defines, by linear extension, the orthogonal projection from $\mathbb H^{\otimes n}$ to $\mathbb H^{\vee n}$.

	In \cite{GiannakisEtAl25}, it is shown that if $w^{-2}$ satisfies \eqref{eq:subconv_fock_space_weights}, ${F_w(\mathbb H)}$ becomes a unital Banach algebra with respect to the symmetric tensor product for a norm $\vertiii{\cdot}$ induced by the operator norm on multiplication operators,
	\begin{displaymath}
		\vertiii{\eta} = \sup_{\xi \in F_w(\mathbb H) \setminus \{0\}} \frac{\lVert \eta \vee \xi \rVert_{F_w(\mathbb H)}}{\lVert \xi \rVert_{F_w(\mathbb H)}}, \quad \vertiii{\eta \vee \xi} \leq \vertiii{\eta} \vertiii{\xi},
	\end{displaymath}
	and with $1_{\mathbb C} \in \mathbb C \subset F_w(\mathbb H)$ as the unit.
	Moreover, associated with ${F_w(\mathbb H)}$ is a coproduct, i.e., a bounded operator $\Delta \colon {F_w(\mathbb H)} \to {F_w(\mathbb H)} \otimes {F_w(\mathbb H)}$ such that
	\begin{displaymath}
		\Delta^*(\eta \otimes \xi) = \eta \vee \xi.
	\end{displaymath}
	Boundedness of the coproduct in conjunction with unitality imply that the Banach algebra norm $\vertiii{\cdot}$ and the Hilbert space norm $\lVert \cdot \rVert_{F_w(\mathbb H)}$ are equivalent, analogously to the case of a unital RKHA described in \cref{app:rkha}.

	The structural similarities between the symmetric Fock space setting in this appendix and the function space setting in \cref{app:rkha} continue at the level of the (co)spectrum.
	Specifically, we may characterize the cospectrum of ${F_w(\mathbb H)}$ (i.e., the set of nonzero vectors $\xi \in F_w(\mathbb H)$ satisfying $\Delta \xi = \xi \vee \xi$) as the set
	\begin{displaymath}
		\sigma_\text{co}({F_w(\mathbb H)}) = \left\{ \xi = \sum_{n=0}^\infty w^{-2}(n) u^{\vee n}: u \in (\mathbb H)_{R_w} \right\} \subset F_w(\mathbb H),
	\end{displaymath}
	where $R_w \geq 1$ is the radius of convergence of the series $\sum_{n=1}^\infty w^{-2}(n) z^n$, $z \in \mathbb C$, and $(\mathbb H)_{R_w} \subset \mathbb H $ is the closed ball of radius $R_w$ centered at the origin.
	The character spectrum $\sigma(F_w(\mathbb H)) \subset F_w(\mathbb H)^*$ (i.e., the set of nonzero multiplicative linear functionals on $F_w(\mathbb H)$) with respect to the symmetric tensor product may then be identified with the  Riesz dual of $\sigma_\text{co}(F_w(\mathbb H))$,
	\begin{equation*}
		\xi \in \sigma_\text{co}(F_w(\mathbb H)) \iff \langle \xi, \cdot \rangle_{F_w(\mathbb H)} \in \sigma(F_w(\mathbb H)).
	\end{equation*}
	We equip $\sigma({F_w(\mathbb H)})$ and $\sigma_\text{co}({F_w(\mathbb H)})$ with the weak$^*$ topology on ${F_w(\mathbb H)}^*$ and the weak topology on ${F_w(\mathbb H)}$, respectively.
	With these topologies, they become homeomorphic compact Hausdorff spaces.

	As mentioned in \cref{sec:fock_space_finite_dim}, the weighted Fock space ${F_w(\mathbb H)}$ may be realized as an RKHA of continuous functions on $\sigma({F_w(\mathbb H)})$, denoted as $\widehat{F}_w(\mathbb H)$, via the Gelfand map $\Gamma \colon F_w(\mathbb H) \to C(\sigma(F_w(\mathbb H)))$, where $(\Gamma\xi)(\chi) = \chi(\xi)$.
	Specifically, the reproducing kernel $\widehat k\colon \sigma(F_w(\mathbb H)) \times \sigma(F_w(\mathbb H)) \to \mathbb C$ of $\widehat F_w(\mathbb H)$ is given by
	\begin{equation*}
		\widehat{k}(\chi_1, \chi_2) = \sum_{n=0}^\infty w^{-2}(n) \langle u_1, u_2 \rangle_{\mathbb H}^n,
	\end{equation*}
	for characters $ \chi_i = \langle \xi_i, \cdot \rangle_{F_w(\mathbb H)}$ with corresponding elements $\xi_i = \sum_{n=0}^\infty w^{-2}(n) u_i^{\vee n}$ of the cospectrum parameterized by $u_i \in (\mathbb H)_{R_w}$.

	In addition, given a feature map $\kappa\colon X \to \mathbb H$ taking values in $(\mathbb H)_{R_w}$, we may apply the procedure described in \cref{app:rkha_examples} to build a unital RKHA $\mathcal H_{\tilde \kappa}$ on $X$, whose reproducing kernel $\tilde k$ is given by~\eqref{eq:power_series_kernel}.
	We also have an induced feature map $\mathcal K \colon X \to F_w(\mathbb H)$ into the weighted Fock space, where
	\begin{equation*}
		\mathcal K(x) \equiv \mathcal K_x = \sum_{n=0}^\infty w^{-2}(n) (\kappa(x))^{\vee n} \in \sigma_\text{co}(F_w(\mathbb H)).
	\end{equation*}
	The RKHA $\mathcal H_{\tilde k}$ on $X$ is then isomorphic to the closed subspace $F_w(\mathbb H)(X) \subseteq F_w(\mathbb H)$ generated by the feature vectors $\mathcal K_x$,
	$$ \mathcal H_{\tilde k} \cong F_w(\mathbb H)(X) := \overline{\text{span}\left\{ \mathcal K_x : x \in X \right\}}.$$
	This isomorphism is implemented by the map $\pi\colon F_w(\mathbb H) \to \mathcal H_{\tilde k}$ defined by linear extension of $\pi(\eta \vee \xi)(x) = \langle \mathcal K_x, \eta \rangle_{F_w(\mathbb H)}\langle \mathcal K_x, \xi\rangle_{F_w(\mathbb H)}$.

	\section{Alternative first amplification}
	\label{app:alt_first_amplification}

	In this appendix we describe the alternative amplification scheme for $V$ and $V^*$ to essentially skew-adjoint operators $L'$ and $L^{+\prime}$, respectively, mentioned in \cref{rk:alt_first_amplification}.
	These operators are defined on the same dense domain $\mathfrak H_0 = C_c(\mathbb N) \otimes_\text{alg} \mathcal H_\infty \subset \mathfrak H$ as $L$ and $L^+$, but instead of being represented by tridiagonal operator matrices featuring $A$ and $S$ they are represented by hollow tridiagonal matrices featuring $V$ and $V^*$.
	Specifically, we define
	\begin{equation}
		\label{eq:alt_l_ops}
		\begin{aligned}
			L'          & = \sum_{i=1}^\infty e_{i,i+1} \otimes (\alpha_i V^* +\beta_i V) - e_{i+1,i} \otimes (\alpha_i V +\beta_i V^*), \\
			L^{\prime+} & = \sum_{i=1}^\infty e_{i,i+1} \otimes (\alpha_i V +\beta_i V^*) - e_{i+1,i} \otimes (\alpha_i V^* +\beta_i V),
		\end{aligned}
	\end{equation}
	where $q \in (0,1)$ and
	\begin{equation}
		\label{eq:alpha_beta}
		\alpha_{2k}=\alpha_{2k+1} = q^3\frac{1-q^{4k}}{1-q^4}, \quad \beta_{2k}=\beta_{2k-1} = q \frac{1-q^{4k}}{1-q^4}, \quad l_i = \alpha_i V + \beta_i V^*.
	\end{equation}
	This leads to the skew-symmetric operator matrices (cf.~\eqref{eq:l_mats})
	\begin{equation*}
		L' =\left[
			\begin{array}{ccccc}
				0      & l_1^*  & 0      & 0      & \cdots \\
				-l_1   & 0      & l_2^*  & 0      & \cdots \\
				0      & -l_2   & 0      & l_3^*  & \cdots \\
				0      & 0      & -l_3   & 0      & \cdots \\
				\vdots & \vdots & \vdots & \vdots & \ddots \\
			\end{array}\right],
		\quad
		L^{\prime+} =\left[
			\begin{array}{ccccc}
				0      & l_1    & 0      & 0      & \cdots \\
				-l_1^* & 0      & l_2    & 0      & \cdots \\
				0      & -l_2^* & 0      & l_3    & \cdots \\
				0      & 0      & -l_3^* & 0      & \cdots \\
				\vdots & \vdots & \vdots & \vdots & \ddots \\
			\end{array}\right].
	\end{equation*}

	Next, define $E'_n \colon \mathcal H \to \mathfrak H$, $n \in \mathbb N$, as
	\begin{equation*}
		E'_nf = \sum_{i=1}^n \frac{1}{q^{i-1}} e_{i} \otimes f.
	\end{equation*}
	These maps play an analogous role to the Fibonacci embeddings $E_n$ from~\eqref{eq:amplification_embedding}, satisfying
	\begin{equation*}
		(p_1 \otimes \Id) L' E'_n f = Vf, \quad (p_1 \otimes \Id) L^{\prime+} E'_n f = V^*f,
	\end{equation*}
	for all $n >1$ and $f \in \mathcal H_\infty$.

	\begin{thm}[alternate of \cref{thm:amplification}]
		\label{thm:alt_amplification}
		For every $q \in (0, 1)$ the unbounded operators $L^{\prime+}$ and $L'$ from~\eqref{eq:alt_l_ops} have the following properties:
		\begin{enumerate}[(a)]
			\item $L'$ and $L^{\prime+}$ are essentially skew-adjoint on $\mathfrak H_0$.
			\item For every $n\in \mathbb N$, $k < n$, and $f \in \mathcal H_\infty$,
			      \begin{equation*}
				      (p_1 \otimes \Id) L^k E'_nf = V^kf, \quad (p_1 \otimes \Id) (L^{\prime+})^k E'_nf = (V^*)^kf.
			      \end{equation*}
		\end{enumerate}
	\end{thm}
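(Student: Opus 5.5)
The plan follows the proof of \cref{thm:amplification} closely. The one real difference is that the recursion now comes from the coefficients $\alpha_i,\beta_i$ in~\eqref{eq:alpha_beta} rather than from the Fibonacci relation. By the symmetry $V\leftrightarrow V^*$, which swaps $l_i$ and $l_i^*$, it suffices to treat $L'$. Here $l_i^* := \alpha_i V^* + \beta_i V$ is the formal adjoint of $l_i$ on $\mathcal H_\infty$, and the claimed identity in (b) for $L'$ should read $(p_1\otimes\Id)(L')^kE'_nf=V^kf$.

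\emph{Part (a).} First, $L'$ is antisymmetric on $\mathfrak H_0$. Its matrix has entries $l_i^*$ at position $(i,i+1)$ and $-l_i$ at position $(i+1,i)$. Since $\langle g, l_i^* h\rangle_{\mathcal H}=\langle l_i g,h\rangle_{\mathcal H}$ for $g,h\in\mathcal H_\infty$ (this uses only that $\alpha_i,\beta_i$ are real and $\mathcal H_\infty\subseteq\dom(V)\cap\dom(V^*)$), the matrix is skew-symmetric. Also $\mathfrak H_0$ is invariant under $L'$ by \cref{asmp:main}(b).

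Next I would show that every $e_l\otimes f$ with $f\in\mathcal H_\infty$ is an analytic vector. The coefficients are uniformly bounded:
\begin{equation*}
0\le\alpha_i,\beta_i\le c:=\frac{q}{1-q^4}\quad\text{for all } i.
\end{equation*}
The underlying graph is the path graph on $\mathbb N$ without loops, whose adjacency matrix has norm at most $2$. Expand each entry of $L'^n(e_l\otimes f)$ by writing every $l_i$ or $l_i^*$ as a combination of $V$ and $V^*$. This produces at most $2^n$ paths, each splitting into words $V_{\epsilon(1)}\cdots V_{\epsilon(n)}f$ with coefficients bounded by $c^n$. Hence
\begin{equation*}
\lVert L'^n(e_l\otimes f)\rVert\le (l+n)(2c)^n\sum_{\epsilon\colon[n]\to[2]}\lVert V_{\epsilon(1)}\cdots V_{\epsilon(n)}f\rVert_{\mathcal H}.
\end{equation*}
By \cref{asmp:main}(c), the series $\sum_n \frac{\lvert z\rvert^n}{n!}\lVert L'^n(e_l\otimes f)\rVert$ therefore converges for $\lvert z\rvert<R_f/(2c)$. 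So $\mathfrak H_0$ is a dense set of analytic vectors, and \cref{thm:nelson}(a) gives essential skew-adjointness. Unlike \cref{thm:amplification}, there is no need to pass through $A$ and $S$.

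\emph{Part (b).} Consider the formal vector $E'f=\sum_{i\ge1}q^{-(i-1)}e_i\otimes f$ and verify $L'E'f=E'Vf$ row by row. Row $i$ of $L'E'f$ equals
\begin{equation*}
q^{-i}l_i^*f-q^{-(i-2)}l_{i-1}f,
\end{equation*}
with the convention $l_0=0$. Matching the coefficients of $V$ and of $V^*$ with those of $q^{-(i-1)}Vf$ reduces the claim to the scalar recursions
\begin{equation*}
\beta_i-q^2\alpha_{i-1}=q,\qquad \alpha_i=q^2\beta_{i-1},
\end{equation*}
together with the initial values $\beta_1=q$ and $\alpha_1=0$. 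The initial values follow directly from~\eqref{eq:alpha_beta}.

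The main obstacle, and where I would spend the care, is checking the two recursions against the parity-split definitions in~\eqref{eq:alpha_beta}. The strategy is to separate $i$ even and $i$ odd and use the geometric-sum identities
\begin{equation*}
q\,\frac{1-q^{4k}}{1-q^4}+q^5\,\frac{1-q^{4(k-1)}}{1-q^4}\cdot\frac{1}{q^2}\cdot q^{2}=q\,\frac{1-q^{4k}}{1-q^4}\quad\text{etc.}
\end{equation*}
If the indexing of $\alpha$ and $\beta$ is off by one, I would adjust it to whatever makes these recursions hold, since the recursions are exactly what the proof needs. Once $L'E'f=E'Vf$ is established, iterating gives $L'^kE'f=E'V^kf$.

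Finally, $L'$ is banded with bandwidth one, so the first component of $L'^kE'_nf$ depends only on the components $1,\dots,k+1$ of $E'_nf$. For $k<n$ these agree with the components of $E'f$. Hence
\begin{equation*}
(p_1\otimes\Id)L'^kE'_nf=(p_1\otimes\Id)L'^kE'f=V^kf,
\end{equation*}
which is the claim in (b). The statement for $L^{\prime+}$ and $V^*$ follows by the same symmetry.
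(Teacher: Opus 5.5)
Your proposal is correct and is essentially the paper's proof. For (a) both use Nelson's analytic-vector criterion, with the loop-free path-graph count (adjacency norm $2$) and uniform bounds on $\alpha_i,\beta_i$. For (b) both use the formal intertwining $L'E'f=E'Vf$, reduced to $\alpha_1=0$, $\beta_1=q$, $\alpha_i=q^2\beta_{i-1}$, $\beta_i-q^2\alpha_{i-1}=q$, and then bandedness. The only difference is cosmetic: you treat $L'$ where the paper treats $L^{\prime+}$.

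You do not need to reindex anything, because the recursions hold exactly as defined in~\eqref{eq:alpha_beta}. The identities $\alpha_{2k}=q^2\beta_{2k-1}$ and $\alpha_{2k+1}=q^2\beta_{2k}$ are immediate. Also $\beta_{2k}-q^2\alpha_{2k-1}=\frac{q(1-q^{4k})-q^5(1-q^{4k-4})}{1-q^4}=q$, and likewise for odd indices. However, the sample identity you displayed has a spurious first term: it should begin with $q$, not $q\frac{1-q^{4k}}{1-q^4}$. As written it is false for $k\ge 2$.
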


	\begin{proof}
		We follow closely the proof of \cref{thm:amplification} and consider only the claims for $L^{\prime+}$.

		For part (a), observe that $\alpha_k$ and $\beta_k$ are strictly increasing and bounded above by $\alpha = q/^3(1-q^4)$ and $\beta=q/(1-q^4)$.
		We also need to analyze the combinatorial growth of terms due to the connectivity of $L^{\prime+}$ as an infinite matrix.
		Precisely, for $\delta,\epsilon,\eta \colon [n] \to [2]$, the number of terms of the form $\prod_{i=1}^n (\alpha_{\eta(i)} V_{\epsilon(i)} +\beta_{\eta(i)} V_{\delta(i)})f$, in the $k^{th}$ entry of $(L^{\prime+})^n(e_l \otimes f)$ is counted by the adjacency matrix for the line graph,
		$$\Gamma' =
			\begin{tikzpicture}
				\clip (-.1,-.1) rectangle (6.5,.5);
				\draw[very thick] (0,0)--(5,0);
				\draw[fill=black] (0,0) circle (.5mm);
				\draw[fill=black] (1,0) circle (.5mm);
				\draw[fill=black] (2,0) circle (.5mm);
				\draw[fill=black] (3,0) circle (.5mm);
				\draw[fill=black] (4,0) circle (.5mm);
				\draw[fill=black] (5.5,0) circle (.2mm);
				\draw[fill=black] (5.75,0) circle (.2mm);
				\draw[fill=black] (6,0) circle (.2mm);
			\end{tikzpicture}$$
		Since the operator norm of this adjacency matrix is 2, we have $\lvert \Gamma^{\prime n}(k,l) \rvert \leq \lVert \Gamma' \rVert^n = 2^n$.
		Then the vectors $e_l \otimes f \in l^2(\mathbb N) \otimes \mathcal H_\infty$ are analytic for $L^{\prime+}$ with a radius of convergence $R_f/(8\beta)$ when the radius of convergence for $f$ is $R_f$ as an analytic vector of $V$ and $V^*$:
		\begin{align*}
			\sum_{n=0}^\infty \frac{z^n}{n!} \left\lVert (L^{\prime+})^n (e_l \otimes f) \right\rVert_{\mathcal H}
			 & \leq \sum_{n=0}^\infty \frac{z^n}{n!} \sum_{\epsilon,\delta,\eta\colon [n] \to [2]} \left\lVert \prod_{i=1}^n (\alpha_{\eta(i)} V_{\epsilon(i)} +\beta_{\eta(i)} V_{\delta(i)}) f \right\rVert_{\mathcal H}                \\
			 & \leq \sum_{n=0}^\infty \frac{z^n}{n!} \left(\sum_{k=1}^{l+n} |\Gamma^{\prime n}(k,l)| \right) \beta^n 2^n \max_{\epsilon\colon [n] \to [2]} \left\lVert V_{\epsilon(1)} \cdots V_{\epsilon(n)} f \right\rVert_{\mathcal H} \\
			 & \leq \sum_{n=0}^\infty \frac{z^n}{n!} (l+n) \beta^n 4^n \sum_{\epsilon \colon [n] \to [2]} \left\lVert V_{\epsilon(1)} \cdots V_{\epsilon(n)} f \right\rVert_{\mathcal H}.
		\end{align*}
		Therefore, $\mathfrak H_0 = C_c(\mathbb N) \otimes_\text{alg}  \mathcal H_\infty$ is a dense family of analytic vectors for $L^{\prime+}$.

		For part (b) consider the formal vector $E' f = \sum_{i=1}^\infty \frac{1}{q^{i-1}} e_{i} \otimes f \in \mathcal H^{\mathbb N}$ and its behavior under $L^{\prime+}$.
		We claim that by the choice of $\alpha_i$ and $\beta_i$ from~\eqref{eq:alpha_beta},
		\begin{equation}
			\label{eq:alt_intertwine_formal}
			E' V^* f = L^{\prime+} E 'f
		\end{equation}
		as formal vectors and matrices.
		Indeed, we have
		\begin{align*}
			L^{\prime+} E'
			 & = \sum_{i=1}^\infty \frac{1}{q^{i}} e_i \otimes (\alpha_i(q) V +\beta_i(q) V^*) - \frac{1}{q^{i-1}} e_{i+1} \otimes (\alpha_i(q) V^* +\beta_i(q) V)                                                                        \\
			 & =\frac{1}{q} e_1 \otimes (\alpha_1(q) V +\beta_1(q) V^*)                                                                                                                                                                   \\
			 & \quad + \sum_{i=2}^\infty   e_i \otimes \left(\frac{1}{q^{i}}\alpha_i(q) -\frac{1}{q^{i-2}}\beta_{i-1}(q) \right) V + e_i \otimes \left(\frac{1}{q^{i}}\beta_i(q) - \frac{1}{q^{i-2}} \alpha_{i-1}(q)\right) V^*           \\
			 & = e_1 \otimes \frac{1}{q}(\alpha_1(q) V +\beta_1(q) V^*)                                                                                                                                                                   \\
			 & \quad + \sum_{i=2}^\infty \frac{1}{q^{i-1}} e_i \otimes\left( \left(\frac{1}{q}\alpha_i(q) -\frac{1}{q^{-1}}\beta_{i-1}(q) \right) V  +  \left(\frac{1}{q}\beta_i(q) - \frac{1}{q^{-1}} \alpha_{i-1}(q)\right) V^*\right),
		\end{align*}
		Equation~\eqref{eq:alt_intertwine_formal} then follows from~\eqref{eq:alpha_beta} as the latter implies
		\begin{gather*}
			\frac{1}{q}\alpha_1=0, \quad \frac{1}{q}\beta_1 =1, \\
			\left(\frac{1}{q}\alpha_i -\frac{1}{q^{-1}}\beta_{i-1} \right) = 0, \quad
			\left(\frac{1}{q}\beta_i - \frac{1}{q^{-1}} \alpha_{i-1}\right)=1, \quad i \geq 2.
		\end{gather*}
		Since the infinite matrix $L^{\prime+}$ is banded, when the map $E'$ is truncated, we have $(p_1 \otimes \Id) (L^{\prime +})^k E'_n f = (p_1 \otimes \Id) (L^{\prime +})^k E f= (V^*)^k f$ as formal vectors provided $k < n$.
	\end{proof}

\end{appendices}

\printbibliography

\end{document}